\theoremstyle{definition}
\newtheorem{defi}{Definition}[section]
\theoremstyle{plain}
\newtheorem{thm}[defi]{Theorem}
\newtheorem{lem}[defi]{Lemma}
\newtheorem{prop}[defi]{Proposition}
\newtheorem*{prop*} {Proposition}
\newtheorem{cor}[defi]{Corollary}
\theoremstyle{remark}
\newtheorem{rmk}[defi]{Remark}
\numberwithin{equation}{section}
\newcommand{\1}{ \mathds{1}}
\newcommand{\comm}[1]{ }
\DeclareMathOperator{\Aut}{Aut}
\DeclareMathOperator{\End}{End}
\DeclareMathOperator{\wt}{wt}
\newcommand{\Z}{\mathbb{Z}}
\newcommand{\M}{\mathbb{M}}
\newcommand{\R}{\mathbb{R}}
\newcommand{\la}{\langle}
\newcommand{\ra}{\rangle}
\newcommand{\C}{\mathbb{C}}
\newcommand{\nop}{\mbox{$\circ\atop\circ$}}
\newcommand{\h}{\mathfrak{h}}
\newcommand{\F}{ \mathbb{F}}
\newcommand{\cC}{ \bm{\mathcal{C}}} 
\newcommand{\cD}{ \bm{\mathcal{D}}} 
\newcommand{\LCD}{{L_{ \cC \times \cD}} } 
\DeclareMathOperator{\Suz}{Suz}
\DeclareMathOperator{\GL}{GL}
\DeclareMathOperator{\Ker}{Ker}
\DeclareMathOperator{\Hom}{Hom}
\DeclareMathOperator{\Span}{Span}
\begin{document}

\title[$\Z_3$-orbifold  construction of the Moonshine VOA]{$\Z_3$-orbifold  construction of the Moonshine vertex operator algebra and some 
maximal $3$-local subgroups of the Monster}

 \author{Hsian-Yang Chen}
  \address[H.Y.  Chen]{ National University of Tainan, Tainan  70005, Taiwan}
\email{hychen@mail.nutn.edu.tw}
 \author{Ching Hung Lam}
 \address[C.H. Lam]{ Institute of Mathematics, Academia Sinica, Taipei  10617, Taiwan
 and National Center for Theoretical Sciences, Taiwan}
\email{chlam@math.sinica.edu.tw}

\author{Hiroki Shimakura}%
\address[H. Shimakura]{Graduate School of Information Sciences,
Tohoku University,
Sendai 980-8579, Japan }%
\email {shimakura@m.tohoku.ac.jp}
\thanks{C.\,H. Lam was partially supported by MoST grant 104-2115-M-001-004-MY3 of Taiwan}
\thanks{H.\ Shimakura was partially supported by JSPS KAKENHI Grant Numbers 26800001}
\thanks{C.\,H. Lam and H.\ Shimakura were partially supported by JSPS Program for Advancing Strategic International Networks to Accelerate the Circulation of Talented Researchers ``Development of Concentrated Mathematical Center Linking to Wisdom of the Next Generation".}

\subjclass[2010]{Primary 17B69; Secondary 20B25 }


\begin{abstract}
In this article, we describe some maximal $3$-local subgroups of the Monster simple group using vertex operator algebras (VOA). 
We first study the holomorphic vertex operator algebra
obtained by applying the orbifold construction to the Leech lattice vertex operator algebra and a lift of a fixed-point free isometry of order $3$ of the Leech lattice. 
We also consider some of its special subVOAs and study their stabilizer subgroups using the symmetries of the subVOAs. It turns out that these stabilizer subgroups are $3$-local subgroups of its full automorphism group.
As one of our main results,  
we show that its full automorphism group is isomorphic to the Monster simple group by using a $3$-local characterization and that the holomorphic VOA is isomorphic to the Moonshine VOA. This approach allows us to obtain relatively explicit descriptions of two 
maximal $3$-local subgroups of the shape $3^{1+12}.2.\Suz{:}2$ and $3^8.\Omega^-(8,3).2$ in the Monster simple group. 
\end{abstract}
\maketitle


\section{Introduction}

The Moonshine vertex operator algebra (VOA) $V^\natural$ is one of the most interesting VOAs.
Its full automorphism group $\Aut V^\natural$ is isomorphic to the Monster simple group $\M$, the largest sporadic finite simple group. This view point is useful for understanding certain aspects of $\M$.
For example, $V^\natural$ was constructed in \cite{FLM} from the lattice VOA $V_\Lambda$ associated to the Leech lattice $\Lambda$ and an automorphism $\theta\in\Aut V_\Lambda$ of order $2$, which is  a lift of the $-1$ isometry of $\Lambda$, as follows:
$$V^\natural:=V_\Lambda^+\oplus V_\Lambda^{T,+},$$ where $V_\Lambda^+$ is the $\theta$-fixed-point subspace of $V_\Lambda$ and $V_\Lambda^{T,+}$ is the subspace of the (unique) irreducible $\theta$-twisted module $V_\Lambda^T$ with integral weights.
By the construction, there is a natural automorphism of order $2$ that acts as $1$ on $V_\Lambda^+$ and as $-1$ on $V_\Lambda^{T,+}$. The centralizer of this automorphism coincides with the stabilizer in $\Aut V^\natural (\cong \M)$ of the subVOA $V_\Lambda^{+}$ and it has the shape $2^{1+24}_+.Co_1$ (cf. \cite{FLM,Sh07}).
In addition, four other maximal $2$-local subgroups of $\M$ are described as the stabilizers in $\Aut V^\natural$ of certain subVOAs of $V_\Lambda^+$ (\cite{Sh07}).
However, it is not easy to describe other maximal subgroups based on this construction. 
In order to study the other $p$-local subgroups of the Monster,  it is natural to ask if there exists a similar $\Z_p$-orbifold construction of $V^\natural$ associated to $V_\Lambda$ and an automorphism of order $p$ and if there is a natural way to describe maximal $p$-local subgroups using such a construction. In this article, we will consider the case where $p=3$ and try to obtain explicit descriptions of some maximal $3$-local subgroups of the Monster. 

Let  $\tau\in\Aut V_\Lambda$  be a lift of a fixed-point free isometry of $\Lambda$ of order $3$ such that $\tau$ has order $3$.
It is well-known \cite{DLM00} that $ V_ \Lambda$ has a (unique) irreducible $ \tau^i$-twisted module for $ i = 1,2$; We denote these twisted modules by $V_\Lambda ^{T_1}(\tau)$ and $ V_ \Lambda^{T_2}(\tau^2)$, respectively.
We also define the $ V_\Lambda^\tau$-module
\begin{align*}
 V^ \sharp :=&   V_ \Lambda^\tau \oplus  (V_ \Lambda ^{T_1}(\tau))_ \mathbb{Z} \oplus  (V_ \Lambda ^{T_2}(\tau^2))_ \mathbb{Z},
\end{align*}
where $V_\Lambda^\tau$ is the $\tau$-fixed point subspace of $V_\Lambda$ and $ (V_\Lambda ^{T_i}(\tau^i))_ \mathbb{Z} $ is the subspace of $ V_\Lambda ^{T_i}(\tau^i)$ with integral weights.

It was first announced by Dong and Mason~\cite{DM94p} that $V^\sharp$ has a natural VOA structure as an extension of the $V_\Lambda^\tau$-module structure. They also claimed that the
full automorphism group of $V^\sharp$ is isomorphic to $\M$ and $V^\sharp$ is isomorphic to  $V^\natural$ as a VOA. However, a
complete proof has not been published.
Recently, Miyamoto proved in  \cite{Mi13a} that  $V^\sharp$ has a holomorphic VOA structure (see also \cite{TY13}). He also showed that $V^\sharp$ is a $\mathbb{Z}_3$-graded simple current extension of the VOA $V_\Lambda^\tau$. 

In this article, we describe two maximal $3$-local subgroups of the Monster simple group $\M$ as lifts of symmetries of certain subVOAs of the Moonshine VOA by the following way.
We view $V^\sharp$ as simple current extensions of the full subVOAs $V_\Lambda^\tau$ and $(V_{K_{12}}^\tau)^{\otimes2}$ graded by $\Z_3$ and $\Z_3^8$, respectively, where $K_{12}$ is the Coxeter-Todd lattice of rank $12$.
By using the fusion rules of these subVOAs, we show that the automorphism groups of $V_\Lambda^\tau$ and $(V_{K_{12}}^\tau)^{\otimes2}$ have the shapes $3^{12}.2\Suz{:}2$ and $(\Omega^-_8(3){:}2)\wr\Z_2$, respectively.
By a lifting theory in simple current extensions, we show that the normalizers in $\Aut V^\sharp$ of the corresponding elementary abelian $3$-subgroups of order $3$ and $3^8$ have the shapes $3^{1+12}.2\Suz{:}2$ and $3^8. \Omega^-_8(3).2$, respectively.
Indeed, they are the stabilizers of the subVOAs $V_\Lambda^\tau$ and $(V_{K_{12}}^\tau)^{\otimes2}$ in $\Aut V^\sharp$, respectively.
By using the $3$-local characterization of $\M$, we see that $\Aut V^\sharp$ is isomorphic to $\M$.
In particular, these two $3$-local subgroups are maximal in $\M$.

The action of $\M$ on a holomorphic VOA of central charge $24$ with trivial weight one space is not enough to characterize the Moonshine VOA.
Indeed, by the character theory of finite groups, a $\M$-invariant $(1+196883)$-dimensional Griess algebra structure has one parameter, up to isomorphism.
We show that it is specified by the additional conditions, the unitary structure of a VOA and the existence of an Ising vector.
By checking that $V^\sharp$ satisfies the conditions and applying the result in \cite{DGL}, we obtain the following theorem:

\begin{thm}\label{Thm:main}
The holomorphic vertex operator algebra $V^\sharp$ is isomorphic to the Moonshine vertex operator algebra $V^\natural$.
\end{thm}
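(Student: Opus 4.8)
The plan is to reduce the statement, via the characterization result of \cite{DGL}, to the verification of two structural properties of $V^\sharp$. Recall that $V^\sharp$ is a holomorphic VOA of central charge $24$ by \cite{Mi13a}, and that $\Aut V^\sharp\cong\M$ has already been established above via the $3$-local characterization of $\M$. First I would record that $V^\sharp_1=0$: the weight-one space of $V_\Lambda^\tau$ is the $\tau$-fixed subspace of $\Lambda\otimes_\Z\C$, which vanishes because $\tau$ acts fixed-point freely, while each irreducible twisted module $V_\Lambda^{T_i}(\tau^i)$ has minimal conformal weight $\tfrac43>1$ (the vacuum anomaly of a fixed-point-free order-$3$ isometry of a rank-$24$ lattice), so its integral-weight part starts in weight $2$ and contributes nothing in weights $0$ or $1$. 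Hence $V^\sharp$ is a holomorphic $c=24$ VOA with $V^\sharp_1=0$, and its Griess algebra $V^\sharp_2$ has dimension $196884$ and carries an action of $\M$; since the only $196884$-dimensional $\M$-module with a trivial submodule (spanned by $\omega$) is $\mathbf 1\oplus\mathbf{196883}$, we get $V^\sharp_2\cong\mathbf 1\oplus\mathbf{196883}$ as an $\M$-module.

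Next, the commutative product and the invariant bilinear form on $V^\sharp_2$ are $\M$-equivariant. Using the multiplicities of $\mathbf 1$ and $\mathbf{196883}$ in $\operatorname{Sym}^2(\mathbf 1\oplus\mathbf{196883})$, the normalization of the form and the relation $\omega\cdot v=2v$ for $v\in V^\sharp_2$, and associativity of the form, one deduces (as is classical) that the Griess algebra of $V^\sharp$ is isomorphic, as an $\M$-invariant commutative algebra with invariant form, to a single member $\mathcal G_t$ of a one-parameter family, where the Moonshine Griess algebra of $V^\natural$ is $\mathcal G_{t_0}$ for one specific value $t_0$. Thus it remains to show that $t=t_0$ for $V^\sharp$.

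To this end I would verify the two conditions isolated in \cite{DGL}. Condition (i): $V^\sharp$ is a unitary VOA. Here $V_\Lambda$ is unitary; choosing $\tau$ to preserve its positive-definite invariant Hermitian form makes $V_\Lambda^\tau$ unitary, and the irreducible twisted modules $V_\Lambda^{T_i}(\tau^i)$ carry positive-definite invariant forms compatible with the extended vertex operators, so the $\Z_3$-graded simple current extension $V^\sharp$ is unitary. Condition (ii): $V^\sharp$ contains an Ising vector, i.e.\ a conformal vector of central charge $1/2$ generating a copy of $L(1/2,0)$; here I would invoke the known constructions of Ising vectors in lattice VOAs --- for instance those attached to $\sqrt2 A_1$-type rank-one sublattices generated by norm-$4$ vectors --- to exhibit one lying in $V^\sharp$, either as a $\tau$-invariant Ising vector of $V_\Lambda$ contained in $V_\Lambda^\tau$, or as an Ising vector of the full subVOA $(V_{K_{12}}^\tau)^{\otimes2}$, and then check the defining Virasoro relations. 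Unitarity together with the presence of an Ising vector forces $t=t_0$, so the Griess algebra of $V^\sharp$ is isomorphic to that of $V^\natural$, and the result of \cite{DGL} (characterizing $V^\natural$ among holomorphic $c=24$ VOAs with vanishing weight-one space through the structure of its Griess algebra) then yields $V^\sharp\cong V^\natural$ as VOAs.

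I expect step (ii) to be the main obstacle: although $V_\Lambda$ carries many Ising vectors, $\tau$ admits no invariant rank-one sublattice, so it is not obvious that any of them is $\tau$-fixed; producing an Ising vector in $V^\sharp$ may require a genuinely new vector with nonzero components in the twisted sectors, or a careful choice of $\tau$-invariant subVOA of $V_\Lambda$ (or of $V_{K_{12}}$) containing a central-charge-$1/2$ conformal vector, after which checking the Virasoro relations is a direct but delicate computation. A secondary technical point is the compatibility in (i) of the positive-definite forms on the twisted sectors with the extended product. Everything else in the argument is representation-theoretic bookkeeping together with the cited results.
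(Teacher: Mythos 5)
Your overall strategy coincides with the paper's: reduce to the uniqueness theorem of \cite{DGL} by showing that the $\M$-invariant Griess algebra of $V^\sharp$ lies in a one-parameter family and that the parameter is pinned down by unitarity together with the existence of an Ising vector. However, the three steps you defer or assert without proof are exactly where the substance lies, so the proposal has genuine gaps. First, ``unitarity plus an Ising vector forces $t=t_0$'' is not a citable black box. The mechanism in the paper is Lemma \ref{Lem:1to1}: the map $e\mapsto\tau_e$ is a bijection from the Ising vectors of the real form onto the $2A$ class of $\M$. Proving this requires Miyamoto's theorem \cite{Mi96} that Ising vectors inducing the same $\tau$-involution are equal or orthogonal (this is where positive-definiteness enters), the bound $\abs{I_e}\le 47$ ruling out a full Virasoro frame (which would force $V_1\neq 0$), and the decomposition of $V_2$ into irreducible $C_\M(t)$-modules for $t$ in class $2A$ versus $2B$ in order to conclude $\tau_e\in 2A$ and $I_e=\{e\}$. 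Only then can one transport $e$ equivariantly to an Ising vector of $V^\natural$, identify $e'=(1/48)\omega-e$ as spanning the $C_\M(\tau_e)$-fixed line in $\omega^\perp$, and use $e_{(1)}e=2e$ to match the structure constants.

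Second, the existence of a suitable Ising vector, which you rightly flag as the main obstacle, is resolved in the paper by a counting trick: $V_{\sqrt2E_8}^\theta$ has exactly $496$ Ising vectors by \cite{LSY}, $\tau$ commutes with $\theta$ and permutes them, and $496\not\equiv 0\pmod 3$, so some Ising vector lies in $V_{\sqrt2E_8}^{\langle\tau,\theta\rangle}\subset V_\Lambda^\tau\subset V^\sharp$; no new vector with twisted-sector components is needed. Third, unitarity of the extension is not automatic from unitarity of $V_\Lambda^\tau$ and of the twisted modules: the Hermitian forms on $(V_\Lambda^{T_i}(\tau^i))_\Z$ are canonical only up to a positive scalar, and one must normalize them so that the resulting anti-linear involution is multiplicative across the twisted sectors. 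The paper does this in Theorem \ref{Thm:uni} by introducing a second commuting order-$3$ automorphism $g\in O_3(H_2)$ together with $\psi\in H_2$ interchanging it with $\tau'$ by conjugation, using $\psi$ to fix the scalars, and then verifying multiplicativity via skew-symmetry and Borcherds' identity on the resulting $\Z_3^2$-grading. Until these three points are supplied, your argument is an outline of the correct route rather than a proof.
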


Let us explain our methods in more detail.

First, we describe two subgroups $H_1$ and $H_2$ of $\Aut V^\sharp$ of shape $3^{1+13}.2\Suz{:}2$ and $3^8.\Omega^-_8(2).2$, respectively.
These subgroups are, in fact, maximal $3$-local in $\M$.
The main idea is to use the fusion rules of the orbifold VOAs $V_\Lambda^\tau$ and $V_{K_{12}}^\tau$, where $K_{12}$ denotes the Coxeter-Todd lattice of rank $12$ and $\tau$ also denotes  a fixed-point free automorphism of $K_{12}$ of order $3$. 
It was shown in \cite{CL14} that all irreducible modules of the VOAs $V_\Lambda^\tau$ and $V_{K_{12}}^\tau$ are simple currents.

To describe the subgroup $H_1$, we
consider the action of $\Aut V_\Lambda^\tau$ on the set of isomorphism classes of irreducible $V_\Lambda^\tau$-modules and study the lift of automorphisms of $V_\Lambda^\tau$ to those of $V_\Lambda$ or $V^\sharp$. By using \cite{Sh04}, we show that $\Aut V_\Lambda^\tau$ is isomorphic to the quotient of the normalizer of $\la \tau\ra $ in $\Aut V_\Lambda$ by $\langle\tau\rangle$, and the stabilizer $H_1$ in $\Aut V^\sharp$ of $V_\Lambda^\tau$ is isomorphic to an extension of $\Aut V_\Lambda^\tau$ by $\Z_3$.
Thus $H_1$ has the shape $3^{1+12}.2\Suz{:}2$.

For the subgroup $H_2$, we notice that the set of (inequivalent) irreducible $V_{K_{12}}^\tau$-modules $R=R(V_{K_{12}}^\tau)$ forms an elementary abelian $3$-group of order $3^8$ under the fusion rules. 
In fact, it forms an $8$-dimensional non-singular quadratic space over $\F_3$ of minus type with respect to the quadratic form defined by lowest weights (see Proposition \ref{thm:O83minus}).
By using the action of $\Aut V_{K_{12}}^\tau$ on $(R,q)$ as a subgroup of the orthogonal group $O(R,q)$, we show in Proposition \ref{Prop:AutVK12tau} that $\Aut V_{K_{12}}^\tau$ has the shape $\Omega^-_8(3){:}2$ (cf.\ \cite{LY13}).
It is well-known that the Leech lattice $\Lambda$ contains $K_{12}\perp K_{12}$ as a full sublattice (e.g. \cite{KLY03}).  
Hence $V_{\Lambda}^\tau$ contains $(V_{K_{12}}^\tau)^{\otimes 2}$ as a full subVOA, and so does $V^\sharp$. 
By viewing $V^\sharp$ as a $\Z_3^8$-graded simple current extension of $(V_{K_{12}}^\tau)^{\otimes 2}$, we obtain an $8$-dimensional maximal totally singular subspace $S^\sharp$ of $R^2(=R\oplus R)$ corresponding to $V^\sharp$.
By \cite{Sh04} (cf.\ \cite{Sh07}), the stabilizer of $(V_{K_{12}}^\tau)^{\otimes 2}$ in $\Aut V^\sharp$ is an extension of the stabilizer of $S^\sharp$ in $\Aut (V_{K_{12}}^\tau)^{\otimes 2}\cong \Aut V_{K_{12}}^\tau\wr\Z_2$ by $\Z_3^8$.
By direct computation, we show that the stabilizer of $S^\sharp$ has the shape $\Omega^-_8(3).2$.
Hence the stabilizer $H_2$ of $(V_{K_{12}}^\tau)^{\otimes 2}$ in $\Aut V^\sharp$ has the shape $3^8. \Omega^-_8(3).2$.

We then show the finiteness of $\Aut V^\sharp$ by using these two subgroups and an argument of \cite{T84} (cf.\ \cite{Sh14}).
Applying the characterization of $\M$ of \cite{SS10} as a finite group containing certain $3$-local subgroups, we prove that the full automorphism group of $V^\sharp$ is isomorphic to the Monster simple group $\M$. 


\medskip

Next, we prove that $V^\sharp$ is unitary in more general setting.
Let $V$ be a self-dual, simple VOA of CFT-type. Assume that $V$ has an automorphism $f$ of order $3$ such that the fixed-point subspace $V^0$ of $f$ has a unitary VOA structure and $V$ has a unitary $V^0$-module structure.
Suppose $V$ has another automorphism $g$ of order $3$ commuting with $f$ and  there exists an automorphism $\psi$ of $V$ such that $\psi^{-1}f\psi=g$ and $\psi^{-1}g\psi=f$.
Furthermore, we assume that the automorphism groups of $V^0$ and $V^{0,0}(=V^{\langle f,g\rangle})$ are compact.
Then we show in Theorem \ref{Thm:uni} that $V$ has a unitary VOA structure as an extension of the unitary $V^0$-module structure.
The key of the proof is that $\psi$ adjusts the unitary forms on the irreducible $V^0$-submodules of $V$.
Then we can define an anti-linear automorphism of $V$ associated to the unitary form, and check that $V$ is unitary by straightforward calculations. 

To prove that $V^\sharp$ is unitary, we check that $V^\sharp$ satisfies all the conditions in the theorem.
It was shown in \cite{Dlin} that the lattice VOA $V_\Lambda$ has a unitary VOA structure, and  so does $V_\Lambda^\tau$. Moreover, one can define unitary module structures on $V_{\Lambda}^{T_i}(\tau^i)$, $i=1,2$, by using the explicit construction of the twisted irreducible modules over a lattice VOA in \cite{Lep85,Dl96}.
Hence $V^\sharp$ is a unitary $V_\Lambda^\tau$-module.
We choose $f$ as the automorphism of $V^\sharp$ that acts on $V_\Lambda^\tau$, $(V_\Lambda^{T_1}(\tau))_\Z$, and $(V_\Lambda^{T_2}(\tau^2))_\Z$ by $1$, $\xi$ and $\xi^2$, respectively, where $\xi=\exp(2\pi\sqrt{-1}/3)$.
We can choose a suitable automorphism $g$ of order $3$ in $O_3(H_2)$ so that $(V_\Lambda^{\tau})^g$ is $V_L^\tau$ for some sublattice $L$ of $\Lambda$, where $H_2$ is the subgroup of $\Aut V^\sharp$ we have already described.
Since $f$ and $g$ belong to $O_3(H_2)$ and they correspond to singular vectors in $(R,q)$, there exists an automorphism $\psi\in H_2$ that exchanges $f$ and $g$ by conjugation.
We know that the automorphism group of $V^0(=V_\Lambda^\tau)$ is finite.
By $V^{0,0}=V_L^\tau$, we can show that the automorphism group of $V^{0,0}$ is also finite by a similar argument.
Hence $V^\sharp$ has a unitary VOA structure.

\medskip

Finally, we give a characterization of the Moonshine VOA $V^\natural$:
If $V$ is  a strongly regular, holomorphic VOA of central charge $24$ satisfying the following: (a) $V_1=0$; (b) $V$ is a unitary VOA; (c) $\Aut V$ is isomorphic to $\M$; (d) the action of $\Aut V$ on $V_2$ is non-trivial; (e) $V$ contains an Ising vector compatible with the unitary structure of $V$,
Then $V$ is isomorphic to $V^\natural$.

For such a $V$, we prove that the map $e\mapsto\tau_e$ is bijective from the set of Ising vectors in $V$ to the set of $2A$-elements of $\M$ (see Lemma \ref{Lem:1to1}), where $\tau_e$ is the $\tau$-involution associated to $e$ \cite{Mi96}.
The main task is to prove the injectivity and the key observation is that 
%
Ising vectors defining the same $\tau$-involution must be equal or orthogonal to  each other \cite{Mi96}. Note that the assumption (b) is required in \cite{Mi96}.
By the assumptions (c) and (d), the Griess algebra $V_2$ is decomposed into a direct sum of two irreducible $\M$-modules of dimension $1$ and $196883$.
Here the $1$-dimensional submodule is spanned by the conformal vector $\omega$ of $V$ and the $196883$-dimensional irreducible module is the orthogonal complement $\omega^\perp$.
For an Ising vector $e$, let $I_e$ denote the set of all Ising vectors in $V_2$ whose $\tau$-involutions coincide with $\tau_e$.
Note that the subspace $\Span_\C I_e $ is preserved by $C_\M(\tau_e)$ and its dimension is at most $48$ by the orthogonality of $I_e$.
By the possible dimensions of irreducible $C_\M(t)$-submodules of $\omega^\perp$ for order $2$ element $t$ in $\M$, $\tau_e$ must belong to the conjugacy class $2A$.
In addition, $\Span_\C I_e $ is $1$-dimensional, that is, $I_e=\{e\}$.
Hence we obtain the injectivity.

By the inner product of irreducible characters of $\M$, the commutative algebra structure on $V_2$ with an invariant form is unique up to scalar on every irreducible $\M$-submodule of $V_2$.
By the one-to-one correspondence, the power of the Ising vector in $V_2$ determines the scalar, and hence $V_2$ is isomorphic to the Monstrous Griess algebra $V^\natural_2$ as algebras. 
Our characterization of $V^\natural$ follows from \cite{DGL}.

The remaining task is to check that $V^\sharp$ satisfies (a)--(e).
Clearly, $V^\sharp$ satisfies (a)--(d). By an explicit embedding of $\sqrt{2}E_8^3$ into the Leech lattice $\Lambda$, it is easy to show that $V_\Lambda^\tau$ contains $V_{\sqrt{2}E_8}^\tau$ as a subVOA. Moreover, all Ising vectors in  $V_{\sqrt2E_8}^\theta$ are described in \cite{LSY} and they are compatible with the unitary structure of $V_{\sqrt{2}E_8}$, where $\theta$ is a lift of the $-1$-isometry.  
Since $\tau$ commutes with $\theta$, it is straightforward to show  $V_{\sqrt2E_8}^{\langle\tau,\theta\rangle}$ contains an Ising vector.
Hence, we obtain an Ising vector in $V_{\sqrt2E_8}^{\langle\tau,\theta\rangle} \subset V_{\sqrt2E_8}^{\tau}\subset V_{\Lambda}^\tau$, which proves that $V^\sharp$ satisfies (e).
Therefore, $V^\sharp$ is isomorphic to the Moonshine VOA $V^\natural$.

\begin{center}
{\bf Notations}
\begin{small}
\begin{tabular}{ll}\\
$\langle\cdot,\cdot\rangle$& the (normalized) invariant bilinear form on a VOA.\\
$a_{n}$& the $n$-th mode of an element $a\in V$.\\
$\Aut V$& the automorphism group of a VOA $V$.\\
$C_G(H)$& the centralizer of a subgroup $H$ in a group $G$.\\
$G.H$& a group extension with normal subgroup $G$ such that the quotient by $G$ is $H$.\\
$K_{12}$& the Coxeter-Todd lattice of rank $12$.\\
$\M$& the Monster simple group.\\
$N_G(H)$& the normalizer of a subgroup $H$ in a group $G$.\\
$\Omega^-_8(3)$& the commutator subgroup of the orthogonal group of\\
& $8$-dimensional quadratic space over $\F_3$ of minus type.\\
$O_3(G)$& the maximal normal $3$-subgroup of a finite group $G$.\\
$O(L)$ & the automorphism group of $L$ preserving the inner product $\langle\cdot|\cdot\rangle$.\\
$O(R,q)$& the orthogonal group of the quadratic space $R$ with quadratic form $q$.\\
$\tau$& a fixed-point free automorphism of a lattice of order $3$\\
& or its lift to an automorphism of the lattice VOA of order $3$.\\
$\Lambda$& the Leech lattice.\\
$R(V)$& the set of isomorphism classes of irreducible $V$-modules.\\
$V_L$& the lattice VOA associated to an even lattice $L$.\\
$V_\Lambda^\tau$& the fixed-point subspace of $\tau$ in $V_\Lambda$, a subVOA of $V_\Lambda$.\\
$V^\natural$& the Moonshine VOA.\\
$V^\sharp$& the holomorphic VOA of central charge $24$ constructed as in \eqref{Eq:Vsharp}.\\
$\wt(M)$ & the lowest $L(0)$-weight of an irreducible ($g$-twisted) module $M$.\\
$\omega$& the conformal vector of a VOA.\\
$W$& the $\tau$-fixed-point subspace $V_{K_{12}}^\tau$ of the Lattice VOA $V_{K_{12}}$ associated to $K_{12}$\\
$\xi$& $\exp(2\pi\sqrt{-1}/3)$.\\
$Z(G)$& the center of a finite group $G$.\\
$\Z_n$& the cyclic group of order $n$.\\
\end{tabular}
\end{small}
\end{center}

\section{Preliminary}
In this section, we will review some fundamental results about VOAs.
In addition, we review the simple current extension and related automorphism groups.

\subsection{Vertex operator algebras}
Throughout this article, all VOAs are defined over the field $\C$ of complex numbers. 
We recall the notion of vertex operator algebras (VOAs) and modules from \cite{Bo,FLM}.

A {\it vertex operator algebra} (VOA) $(V,Y,\1,\omega)$ is a $\Z_{\ge0}$-graded
 vector space $V=\bigoplus_{m=0}^\infty V_m$ equipped with a linear map
$$Y(a,z)=\sum_{i\in\Z}a_{i}z^{-i-1}\in (\End V)[[z,z^{-1}]],\quad a\in V$$
and the {\it vacuum vector} $\1$ and the {\it conformal vector} $\omega$
satisfying a number of conditions (\cite{Bo,FLM}). We often denote it by $V$.
Note that $L(n)=\omega_{n+1}$ satisfy the Virasoro relation:
$$[L{(m)},L{(n)}]=(m-n)L{(m+n)}+\frac{1}{12}(m^3-m)\delta_{m+n,0}c\ {\rm id}_V,$$
where $c$ is a complex number, called the {\it central charge} of $V$.

A linear automorphism of $V$ is called {\it an automorphism} of $V$ if it satisfies $$ g\omega=\omega\quad {\rm and}\quad gY(v,z)=Y(gv,z)g\quad
\text{ for all } v\in V.$$
We denote the group of all automorphisms of $V$ by $\Aut V$.
A {\it vertex operator subalgebra} (or a {\it subVOA}) is a graded subspace of
$V$ which has a structure of a VOA such that the operations and its grading
agree with the restriction of those of $V$ and that they share the vacuum vector.
When they also share the conformal vector, we call it a {\it full subVOA}.
For an automorphism $g$ of a VOA $V$, let $V^g$ denote the set of fixed-points of $g$.
Clearly $V^g$ is a full subVOA of $V$.

For $g\in\Aut V$ of order $p$, a $g$-twisted $V$-module $(M,Y_M)$ is a $\C$-graded vector space $M=\bigoplus_{m\in\C} M_{m}$ equipped with a linear map
$$Y_M(a,z)=\sum_{i\in\Z/p}a_{i}z^{-i-1}\in (\End M)[[z^{1/p},z^{-1/p}]],\quad a\in V$$
satisfying a number of conditions (\cite{Lep85,DLM00}).
We often denote it by $M$.
Note that an (untwisted) $V$-module is a $1$-twisted $V$-module
and that a $g$-twisted $V$-module is an (untwisted) $V^g$-module.
The {\it weight} of a homogeneous vector $v\in M_k$ is $k$, where $L(0)=\omega_{1}$.
Note that $L(0)v=kv$ if $v\in M_k$.
For a ($g$-twisted) irreducible $V$-module $M$, we use $\wt(M)$ to denote its lowest weight.

A VOA is said to be  {\it rational} if any module is completely reducible.
A rational VOA is said to be {\it holomorphic} if it itself is the only irreducible module up
to isomorphism.
A VOA is said to be {\it of CFT-type} if $V_0=\C\1$
, and is said to be {\it $C_2$-cofinite} if the codimension in $V$ of the subspace spanned by the vectors of form $u_{-2}v$, $u,v\in V$, is finite.
A module is said to be {\it self-dual} if its contragredient module (cf.\ \cite[Section 5.2]{FHL}) is isomorphic to itself.

\subsection{Simple current extension}

In this subsection, we review the theory of simple current extensions and lifts of automorphisms from \cite{SY03,Sh04}.

Let $V(0)$ be a simple, rational, $C_2$-cofinite VOA of
CFT-type and let $\{ V(\alpha) \mid \alpha \in A\}$ be a set of inequivalent irreducible $V(0)$-modules indexed by an abelian group
$A$. A simple VOA $V=\oplus_{\alpha\in A} V(\alpha)$ is called an {\it $A$-graded extension} of $V(0)$ if $V(0)$ is a full subVOA of
$V$ and $V$ carries a $A$-grading, i.e., $Y(x^\alpha,z)x^\beta\in V({\alpha+\beta})$ for any $x^\alpha \in V(\alpha)$, $x^\beta \in
V(\beta)$. In this case, the group $A^*$ of all irreducible characters of $A$ acts naturally on $V$: for $\chi\in A^*$,  $\chi\cdot v =\chi(\alpha) v$, $v\in V(\alpha)$. In other words, $V(\alpha)$ is an eigenspace of $A^*$ for all $\alpha\in A$ and $V(0)$ is the
fixed-points of $A^*$.
An irreducible $V(0)$-module $M$ is called a {\it simple current} if for any irreducible $V(0)$-module $U$, the fusion product $U\boxtimes M$ is irreducible.

If all $V(\alpha)$, $\alpha\in A$, are simple currents, then $V$ is referred to as an {\it $A$-graded simple current extension} of $V(0)$. 
Note that the fusion product $V(\alpha)\boxtimes V(\beta)=V(\alpha+\beta)$ holds for all $\alpha,\beta\in A$.
Let $S_A$ be the set of the isomorphism classes of the irreducible $V(0)$-modules $V(\alpha)$, $\alpha\in A$.
For an automorphism $g$ of $V(0)$, we set $S_A\circ g=\{M\circ g|\ M\in S_A\}$, where $M\circ g$ denotes the $g$-conjugate of $M$, i.e., $M\circ g=M$ as a vector space and the vertex operator $Y_{M\circ g}( u,z)= Y_M(gu,z)$, for $u\in V$.
Note that the lowest weights of $M$ and $M\circ g$ are the same.
Define  
$H_A=\{h\in\Aut V(0) \mid \ S_A\circ h=S_A\}.$    
Then we obtain the restriction homomorphism from $N_{\Aut V}(A^*)$ to $H_A$.
\begin{thm}[\cite{Sh04}; see also \cite{SY03}]\label{NCthm}
Let $V=\oplus_{\alpha\in A}V(\alpha)$ be an $A$-graded simple current extension of $V(0)$.
Then the restriction homomorphism is surjective and its kernel is $A^*$. That is, we have a short exact sequence
\begin{align*}
 0 \longrightarrow A ^\ast \longrightarrow N_{ \Aut V}( A ^\ast) \longrightarrow H_A \longrightarrow 0.
\end{align*}
\end{thm}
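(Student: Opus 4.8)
The plan is to construct, for each $h \in H_A$, an automorphism $\tilde h$ of the extension $V = \oplus_{\alpha \in A} V(\alpha)$ restricting to $h$ on $V(0)$, and then to identify the ambiguity in this lift with $A^*$. The starting point is that for $h \in H_A$ the condition $S_A \circ h = S_A$ means $h$ permutes the isomorphism classes of the graded pieces; since $V$ is an $A$-graded \emph{simple current} extension, the multiplication map $V(\alpha) \boxtimes V(\beta) \to V(\alpha+\beta)$ is an isomorphism of $V(0)$-modules, so the whole VOA structure on $V$ is, up to scalars, rigidly determined by that of $V(0)$ together with one nonzero intertwining operator of type $\binom{V(\alpha+\beta)}{V(\alpha)\ V(\beta)}$ for each pair. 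Concretely, I would first fix, for every $\alpha$, a $V(0)$-module isomorphism $\phi_\alpha \colon V(\alpha) \circ h \xrightarrow{\ \sim\ } V(\sigma(\alpha))$, where $\sigma$ is the induced permutation of $A$ (which one checks is in fact a group automorphism of $A$, using that $V(\alpha)\boxtimes V(\beta) = V(\alpha+\beta)$ and that $h$ respects fusion products). Then the candidate lift is $\tilde h = \bigoplus_\alpha \phi_\alpha$ acting component-wise; the point is to choose the scalars in the $\phi_\alpha$ so that $\tilde h$ intertwines the vertex operators, i.e. $\tilde h\, Y(x,z)\, y = Y(\tilde h x, z)\, \tilde h y$ for $x \in V(\alpha)$, $y \in V(\beta)$.

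The heart of the matter is this scalar adjustment, and I expect it to be the main obstacle. The obstruction to matching up the intertwining operators lives in a space of ``associativity'' or ``$2$-cocycle'' data: having normalized $\phi_0 = h$ and the action on $V(0)$, the discrepancy in $\tilde h Y(x,z)y$ versus $Y(\tilde h x,z)\tilde h y$ is measured by a function $A \times A \to \C^\times$. One shows this discrepancy is a $2$-coboundary — equivalently, that the relevant cohomological obstruction vanishes — by invoking the uniqueness (up to scalar) of the simple current extension structure over $V(0)$ on the fixed collection $S_A$, which is exactly the kind of result established in \cite{SY03, Sh04} for simple current extensions; rescaling the $\phi_\alpha$ by the corresponding $1$-cochain produces a genuine automorphism. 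This proves surjectivity of the restriction homomorphism onto $H_A$.

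For the kernel: if $g \in N_{\Aut V}(A^*)$ restricts to the identity on $V(0)$, then $g$ preserves each eigenspace $V(\alpha)$ of $A^*$ (since $g$ normalizes $A^*$ it permutes eigenspaces, and fixing $V(0)$ pins down the permutation as trivial once one notes $V(\alpha)$ is determined by $V(0)$ up to the fixed labeling), so $g$ acts on each $V(\alpha)$ as a $V(0)$-module endomorphism; because $V(\alpha)$ is irreducible, $g|_{V(\alpha)} = \chi(\alpha)\,\mathrm{id}$ for some $\chi(\alpha) \in \C^\times$. Compatibility of $g$ with the vertex operators, $g Y(x^\alpha, z) x^\beta = Y(g x^\alpha, z) g x^\beta$ on $V(\alpha+\beta)$, forces $\chi(\alpha+\beta) = \chi(\alpha)\chi(\beta)$, so $\chi \in A^*$ and $g$ is the element of $A^*$ acting by $\chi$. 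Conversely every element of $A^*$ restricts trivially to $V(0)$ by definition of the $A^*$-action. Hence the kernel is precisely $A^*$, and since $A^*$ is abelian (indeed central in $N_{\Aut V}(A^*)$ after the identification) we obtain the stated short exact sequence
\begin{align*}
0 \longrightarrow A^* \longrightarrow N_{\Aut V}(A^*) \longrightarrow H_A \longrightarrow 0.
\end{align*}
Finally I would double-check that the image genuinely lands in (and exhausts) $H_A$: any $g \in N_{\Aut V}(A^*)$ sends the $A^*$-eigenspace $V(\alpha)$ to another eigenspace $V(\alpha')$ isomorphically as $V(0)$-modules once restricted, so $g|_{V(0)}$ satisfies $S_A \circ (g|_{V(0)}) = S_A$, i.e. lies in $H_A$; combined with the surjectivity argument above this pins the image down exactly.
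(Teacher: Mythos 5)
The paper offers no proof of this theorem---it is imported verbatim from \cite{Sh04} (cf.\ \cite{SY03,Sh07})---so the comparison is with the cited proof, and your plan follows essentially the same route: the kernel computation via Schur's lemma on the $A^*$-eigenspaces (each $V(\alpha)$ is fixed because the $V(\alpha)$ are pairwise non-isomorphic $V(0)$-modules, so the induced permutation is trivial) is exactly the standard argument, and surjectivity by rescaling component maps $\phi_\alpha$ until the $2$-cocycle discrepancy dies, with the vanishing supplied by the uniqueness of the simple current extension structure, is precisely the mechanism of \cite{Sh04}. Two small caveats. First, ``uniqueness up to scalar'' does not by itself kill the obstruction: the reason the cocycle $A\times A\to\C^\times$ is a coboundary is that it is \emph{symmetric} (forced by skew-symmetry/commutativity of the vertex operators on the abelian grading group) and symmetric $2$-cocycles of a finite abelian group with values in the divisible group $\C^\times$ split; since you are black-boxing exactly the theorem being proved in the cited references, you should at least name this as the content of the box. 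Second, your parenthetical that $A^*$ is central in $N_{\Aut V}(A^*)$ is false in general---in this paper's own applications the quotient acts on $A^*\cong\Z_3^8$ as a natural $\Omega_8^-(3)$-module, and $\tau'$ is inverted by elements of $H_1$---but only normality of the kernel is needed for the exact sequence, so this does not damage the argument.
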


\subsection{Automorphism group of $V_{K_{12}}^\tau$}\label{Sec:AutV}
In this subsection, we will determine the automorphism group of $V_{K_{12}}^\tau$.

Let $K_{12}$ denote the Coxeter-Todd lattice of rank $12$.
Let $\langle\cdot|\cdot\rangle$ be the inner product of $\R\otimes_\Z K_{12}$.
For the construction and properties of $K_{12}$, see \cite{CS83}.
Let $K^*_{12}$ denote the dual lattice of $K_{12}$.
It is well-known that $K_{12}^*/K_{12}\cong\F_3^{6}$ and the map $q_{K_{12}}:K_{12}^*/K_{12}\to\F_3$, $q_{K_{12}}(x+K_{12})=3|x|^2\pmod3$ is a non-singular quadratic form of minus type on $K_{12}^*/K_{12}$.
Let $O(K_{12})$ denote the automorphism group of $K_{12}$ and let $\tau$ be a generator of $O_3(O(K_{12}))$, where $O_3(G)$ means the maximal normal $3$-subgroup of a finite group $G$. 
Then the action of $\tau$ on $K_{12}$ is fixed-point free and that $O(K_{12})/\langle\tau\rangle$ is isomorphic to the orthogonal group $O(K_{12}^*/K_{12},q_{K_{12}})$ of the shape $2.\Omega_6^-(3).2^2$.
Hence the automorphism group $O(K_{12})$ of $K_{12}$ has the shape $6.\Omega_6^-(3).2^2$.

Let $V_{K_{12}}$ be the lattice VOA associated to $K_{12}$.
There exists a lift of $\tau$ of order $3$ in $\Aut V_{K_{12}}$, which we denote by the same symbol $\tau$.

Set $W=V_{K_{12}}^\tau$.
We first review the properties of irreducible $W$-modules from \cite{CL14}.
It follows from \cite{TY13} and \cite{HKL} that $W$ is rational and $C_2$-cofinite.
It is known that there exist exactly $3^8$ non-isomorphic irreducible $W$-modules
$$S^a[x]:=V_{a+K_{12}}[x],\quad T^a[x;i]:=V_{K_{12}}^{T,a}(\tau^i)[x],$$
where $a\in K_{12}^*/K_{12}$, $i\in\{1,2\}$ and $x\in\{0,1,2\}$.
We often view $i$ and $x$ as elements in $\F_3$ by modulo $3$.
For the notation of irreducible $W$-modules, see \cite{CL14}.
Note that $S^0[0]=V_{K_{12}}^\tau$.
The lowest weights of irreducible $W$-modules satisfy the following:
\begin{equation}
 \wt\left(S^a[x]\right)\in\frac{|a|^2}{2}+\Z,\quad \wt\left({T}^a[x;i]\right)\in\frac{2(1+x+|a|^2)}{3}+\Z.\label{Eq:wt}
 \end{equation}
The fusion products of irreducible $W$-modules are given as follows (\cite[Section 5]{CL14}):
\begin{align}
\begin{split}\label{FR:K12}
 S^a[x]  +  S^b[ y ] = S^{ a +b} [ x +y],\qquad&
 S^a[x]  +  T^b[ y;i ] = T^{ b -{i}a } [ y+{i}x;i],\\
 T^a[x;1]  +  {T}^b[ y;2 ] = S^{b -a   } [ x -y],\qquad&
 T^a[x;i]  +  T^b[ y;i ] = {T}^{- (a +b)  } [- ( x +y);2i],
\end{split}
\end{align}
where $+$ means the fusion product. Recall that the fusion products are commutative and associative. 

Let $R(W)$ denote the set of isomorphism classes of irreducible $W$-modules.
We often identify an irreducible $W$-modules with its isomorphism class without confusion.
One can easily check that $(R(W),+)$ has an elementary abelian $3$-group structure of order $3^8$.
We view $(R(W),+)$ as an $8$-dimensional vector space over $\F_3$.
Let $q$ be the map $R(W)\to\F_3$ defined by 
\begin{equation}
q(M)\equiv 3\wt(M)\pmod3.\label{Def:q}
\end{equation}

\begin{prop}\label{thm:O83minus} The map $q$ is a non-singular quadratic form on $R(W)$ of minus type.
\end{prop}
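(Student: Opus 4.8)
The plan is to verify directly from the data in \eqref{Eq:wt} and \eqref{FR:K12} that $q$ as defined in \eqref{Def:q} is a well-defined quadratic form on the $8$-dimensional $\F_3$-space $R(W)$, and then to identify its type by counting singular vectors (or by exhibiting an orthogonal decomposition into known pieces). First I would fix convenient coordinates: writing $K_{12}^*/K_{12}\cong\F_3^6$ with the quadratic form $q_{K_{12}}(a)=3|a|^2\bmod 3$ of minus type, the untwisted modules $S^a[x]$ contribute a copy of $K_{12}^*/K_{12}$ together with the grading parameter $x$, while the twisted sectors $T^a[x;i]$ fill out the remaining directions. From \eqref{Eq:wt} we have $q(S^a[x])\equiv 3|a|^2/2\equiv -3|a|^2\equiv -q_{K_{12}}(a)\pmod 3$ (note $1/2\equiv -1$ in $\F_3$), which is independent of $x$; hence on the ``untwisted plus $x$'' part $q$ restricts to $-q_{K_{12}}$ on the $K_{12}^*/K_{12}$ summand and is identically $0$ on the $\F_3$-line spanned by $S^0[1]$. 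For the twisted modules, $q(T^a[x;i])\equiv 2(1+x+|a|^2)\pmod 3$.

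Next I would check the quadratic-form axioms. A quadratic form over $\F_3$ is a function $q$ with $q(\lambda M)=\lambda^2 q(M)$ and such that $b(M,N):=q(M+N)-q(M)-q(N)$ is bilinear; over $\F_3$ one must be a little careful, but since $\lambda^2=1$ for $\lambda=\pm1$ the homogeneity amounts to $q(-M)=q(M)$, which is visible from \eqref{FR:K12} (e.g. $T^a[x;i]+T^a[x;i]=T^{a}[x;2i]$ up to the sign convention, and the weight formula is symmetric under the relevant sign change). The real work is bilinearity of $b$: one computes $b$ on all pairs of the types $(S,S)$, $(S,T)$, $(T,T)$ using the fusion rules to add the modules and the weight formulas \eqref{Eq:wt} to evaluate $q$, and checks that the result is $\F_3$-bilinear in each coordinate. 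For $(S^a[x],S^b[y])$ we get $b=-(q_{K_{12}}(a+b)-q_{K_{12}}(a)-q_{K_{12}}(b))$, the (negative of the) polarization of $q_{K_{12}}$, which is bilinear. For the mixed and twisted pairs one uses the third and fourth lines of \eqref{FR:K12}; e.g. $S^a[x]+T^b[y;i]=T^{b-ia}[y+ix;i]$, so $q$ of the sum is $2(1+(y+ix)+|b-ia|^2)$, and subtracting $q(S^a[x])+q(T^b[y;i])$ yields a bilinear expression in $(a,x)$ and $(b,y;i)$ after expanding $|b-ia|^2=|b|^2-2i\langle a|b\rangle+i^2|a|^2$ and reducing mod $3$ (here $i^2\equiv 1$ for $i\in\{1,2\}$, which conveniently makes the $|a|^2$ term match). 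The $(T,T)$ cases are handled the same way.

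Having established that $q$ is a quadratic form, I would show it is non-singular by checking that the associated bilinear form $b$ has trivial radical: a vector pairing to $0$ with everything, when expanded in the $S$- and $T$-coordinates, forces (via non-singularity of $q_{K_{12}}$ on $K_{12}^*/K_{12}$ and the explicit cross terms involving the parameters $x$ and $i$) all coordinates to vanish. Finally, for the type: the discriminant (or Witt index) of $q$ is the product of the contributions from an orthogonal decomposition. One clean route is to split $R(W)$ as an orthogonal sum of $K_{12}^*/K_{12}$ (carrying $-q_{K_{12}}$, of minus type in rank $6$) and a complementary non-singular plane spanned by suitable twisted-sector classes; then I would compute $q$ on that plane directly from $2(1+x+|a|^2)$ and see it represents $0$ nontrivially (plus type) or not (minus type). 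Combining the $6$-dimensional minus-type piece with a $2$-dimensional piece, the total type is minus iff the plane is plus type, which the explicit formula decides. Alternatively, and perhaps more robustly, I would simply count the number of singular vectors $N=|\{M\in R(W):q(M)=0\}|$; for a non-singular quadratic space of dimension $8$ over $\F_3$ this is $3^7+\epsilon(3^4-3^3)=2187+54\epsilon$ with $\epsilon=+1$ for minus type and $\epsilon=-1$ for plus type, so a single count of solutions to $3\wt(M)\equiv 0$ over the $3^8$ modules (using \eqref{Eq:wt}) pins down $\epsilon$.

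The main obstacle is the bookkeeping in verifying bilinearity of $b$ across the mixed $(S,T)$ and $(T,T)$ sectors: the fusion rules \eqref{FR:K12} mix the lattice coordinate, the $x$-grading, and the twist index $i$ in a way that intertwines additive and multiplicative structure (the factors of $i$ multiplying $a$), and one must be careful that the apparent nonlinearity in $i$ disappears modulo $3$ because $i^2\equiv 1$ and because the $T$-sector only involves $i\in\{1,2\}$. Getting the sign conventions in \eqref{FR:K12} consistent with the weight formula \eqref{Eq:wt} is the delicate point; once that is done, the quadratic-form axioms and non-singularity are routine, and the type follows from either the orthogonal splitting or the singular-vector count.
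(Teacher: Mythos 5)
Your proposal follows essentially the same route as the paper: polarize $q$, verify bilinearity of the associated form on all pairs of sectors using the fusion rules \eqref{FR:K12} together with the weight formulas \eqref{Eq:wt}, and then read off the type from the orthogonal splitting of $R(W)$ into the $6$-dimensional untwisted piece $\{S^a[0]\}$ (isometric to $(K_{12}^*/K_{12},q_{K_{12}})$, of minus type; rescaling by $-1$ does not change the type in even dimension) and a complementary non-singular plane, which the paper takes to be $\Span_{\F_3}\{S^0[1],T^0[2;1]\}$ and which is of plus type. One caution about your fallback counting method: in $N=3^7+\epsilon(3^4-3^3)$ the convention is reversed --- the hyperbolic (plus-type) space has the \emph{larger} number of singular vectors, so $\epsilon=+1$ corresponds to plus type, and as written that route would report the wrong type.
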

\begin{proof} Let $B(M_1,M_2):=(q(M_1+M_2)-q(M_1)-q(M_2))/2\in\F_3$.
By \eqref{Eq:wt}, we obtain
\begin{align}
\begin{split}\label{Eq:B}
&B(S^a[x],S^b[y])=3\langle a|b\rangle,\qquad B(S^a[x],T^b[y;i])=-{i}(3\langle a|b\rangle+x),\\ &B(T^a[x;i],T^b[y;j])={i}{j}(3\langle a|b\rangle-x-y+1).
\end{split}
\end{align}
Hence by \eqref{FR:K12}, we can easily see that $B(\,,\,)$ is bilinear on $R(W)$.
Clearly $B(\,,\,)$ is non-singular.
By \eqref{Eq:B}, we have the orthogonal direct sum of quadratic spaces as follows:$$R(W)=\{S^a[0]\mid a\in K_{12}^*/K_{12}\}\perp\Span_{\F_3}\{ S^0[1],T^0[2;1]\}.$$
The former is isomorphic to the $6$-dimensional quadratic space $(K_{12}^*/K_{12},q_{K_{12}})$ of minus type and the latter is a $2$-dimensional quadratic space of plus type.
Hence the type of the quadratic space $(R(W),q)$ is minus.
\end{proof}

The following theorem can be proved by the exactly the same argument as in \cite{LY13}.

\begin{prop}[cf.\ {\cite[Theorem 5.15]{LY13}}]\label{thm:ses:C_AutVL: C_OL}
 Let $L$ be a positive-definite even rootless lattice.
Let $\nu$ be a fixed-point free isometry of $L$ of prime order $p$.
Let $\nu$ also denote a lift of $\nu$ in $\Aut V_L$ of order $p$.
Then we have an exact sequence
\begin{equation}\label{eq:ses:C_AutVL: C_OL}
   1 \longrightarrow \Hom(L/(1-\nu)L, \mathds{Z}_p)
  \longrightarrow  N_{\Aut V_L}({\nu})
   \longrightarrow  N_{O(L)}(\nu) \longrightarrow 1.
\end{equation}
\end{prop}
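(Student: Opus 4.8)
The plan is to follow the proof of \cite[Theorem 5.15]{LY13} verbatim, adapting only the places where the specific lattice $K_{12}$ entered there. The key structural input is the following description of lifts of isometries. Given a fixed-point free isometry $\nu$ of $L$ of prime order $p$, the lattice VOA $V_L$ has the standard automorphism $\hat\nu$ lifting $\nu$ coming from a section of $\hat L \to L$, and any other lift of $\nu$ differs from $\hat\nu$ by an inner automorphism $\exp(2\pi\sqrt{-1}h_{(0)})$ for some $h\in \mathfrak h = \C\otimes_\Z L$; the condition that such a product has order $p$ (together with compatibility with the group action on $\hat L$) pins down $h$ modulo $L$, and the ambiguity is exactly $\Hom(L/(1-\nu)L,\Z_p)$ — here one uses that $\nu$ fixed-point free forces $(1-\nu)$ invertible over $\Q$, so $L/(1-\nu)L$ is a finite group, and that $L$ rootless guarantees $\Aut V_L = N(\hat L) / \hat L$ with no extra $\mathfrak{sl}_2$'s, i.e.\ the connected component of $\Aut V_L$ is just the torus $\h/L$.

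Concretely, the steps are: (i) recall $\Aut V_L$ is generated by the lifts $\hat O(L)$ of $O(L)$ and the torus $N(\h) := \{\exp(2\pi\sqrt{-1}h_{(0)}) \mid h\in\h\} \cong \h/L$, with $\h/L$ normal and the quotient $O(L)$; this uses rootlessness. (ii) Show the obvious map $N_{\Aut V_L}(\hat\nu) \to N_{O(L)}(\nu)$ is surjective: given $g\in N_{O(L)}(\nu)$, any lift $\hat g$ of $g$ satisfies $\hat g \hat\nu \hat g^{-1}$ is a lift of $\nu$, hence equals $\hat\nu \exp(2\pi\sqrt{-1}h_{(0)})$ for some $h$; since $\exp(2\pi\sqrt{-1}h_{(0)})$ lies in the torus and $(1-\nu)$ is invertible over $\Q$, one can correct $\hat g$ by an element of the torus to land in the normalizer, establishing surjectivity. (iii) Compute the kernel: an element of $N_{\Aut V_L}(\hat\nu)$ that maps to $1\in O(L)$ lies in the torus $\h/L$, and it normalizes (in fact centralizes, since the torus is abelian) $\hat\nu$ iff the corresponding coset in $\h/L$ is $\nu$-fixed; the $\nu$-fixed points of $\h/L$ form $(\h^\nu + L)/L \cong \h^\nu/(\h^\nu\cap L)$, which is trivial because $\nu$ is fixed-point free ($\h^\nu = 0$). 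Wait — that would give kernel $0$, not $\Hom(L/(1-\nu)L,\Z_p)$, so the correct statement is that the kernel of $N_{\Aut V_L}(\hat\nu)\to N_{O(L)}(\nu)$ is $\{\exp(2\pi\sqrt{-1}h_{(0)}) : (1-\nu)h \in L\}/(\h/L \cap \text{trivial}) \cong \{h+L : (1-\nu)h\in L\}$, and the isomorphism $\{h\in\h : (1-\nu)h\in L\}/L \xrightarrow{\ 1-\nu\ } \ker(L \to L/(1-\nu)L)^{\!*}$-type duality identifies this with $\Hom(L/(1-\nu)L,\Z_p)$ once one checks the order divides $p$. This last identification is the bookkeeping heart of the argument and mirrors the computation in \cite{LY13}.

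The main obstacle — really the only nontrivial point — is verifying that the argument of \cite{LY13}, which was written for $L = K_{12}$ and $p=3$, uses $K_{12}$ only through the two properties ``even'' and ``rootless'' (and ``fixed-point free $\nu$ of prime order''), and nowhere through finer structure of $K_{12}$ such as its discriminant form or its automorphism group. One should check in particular that the identification of the connected automorphism group of $V_L$ with the torus $\h/L$ (which is where rootlessness is essential, via the theorem of Dong–Nagatomo that $\Aut V_L$ has Lie algebra $(V_L)_1 = \h$ when $L$ has no roots) goes through, and that the combinatorial identification of the kernel with $\Hom(L/(1-\nu)L,\Z_p)$ — including the claim that every element of this group has order dividing $p$ — only uses $\nu^p = 1$ and the invertibility of $1-\nu$ over $\Q$. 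Given that \cite{LY13} already carries out exactly this computation in the prototypical case, the proof amounts to the single sentence ``the same argument applies,'' and I would present it as such, perhaps spelling out steps (ii) and (iii) above to the extent needed to make the generality transparent.
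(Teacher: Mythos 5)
Your proposal coincides with the paper's approach: the paper's entire proof of this proposition is the single sentence that it ``can be proved by exactly the same argument as in \cite[Theorem 5.15]{LY13}'', which is precisely your plan, and your expanded sketch of that argument (rootlessness forcing the connected component of $\Aut V_L$ to be the torus of inner automorphisms, correcting lifts via invertibility of $1-\nu$ over $\Q$ to get surjectivity, and identifying the kernel with $\Hom(L/(1-\nu)L,\Z_p)$) is the right one. The only quibble is that the torus $\{\exp(2\pi\sqrt{-1}h_{(0)})\mid h\in\h\}$ is $\h/L^{*}$ rather than $\h/L$, since $\exp(2\pi\sqrt{-1}h_{(0)})$ fixes each $e^{\alpha}$ exactly when $\langle h,\alpha\rangle\in\Z$ for all $\alpha\in L$; this normalization does not affect the identification of the kernel, as $\{h\in\h\mid (1-\nu)h\in L^{*}\}/L^{*}\cong L^{*}/(1-\nu)L^{*}\cong\Hom(L/(1-\nu)L,\Z_p)$ all the same.
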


Let $g\in\Aut W$.
Consider the map $\rho(g)$: $M\mapsto M\circ g$ on $R(W)$.
Since this action preserves the fusion products, $\rho(g)$ is a group isomorphism of $R(W)$.
In addition, it preserves the lowest weights of $W$-modules and hence $\rho(g)$ preserves the quadratic form $q$, also.
Hence we obtain a group homomorphism $\rho:\Aut W\to O(R(W),q)$, where $O(R(W),q)$ is the orthogonal group.

\begin{prop}\label{Prop:AutVK12tau} The map $\rho$ is a group monomorphism and the image is the index $2$ subgroup of $O(R(W),q)$ of shape $\Omega_8^-(3){:}2$.
In particular, $\Aut V_{K_{12}}^\tau$ has the shape $\Omega_8^-(3){:}2$.
\end{prop}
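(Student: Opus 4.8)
The plan is to establish the two assertions in Proposition~\ref{Prop:AutVK12tau} separately: first that $\rho$ is injective, and then that its image is precisely the index $2$ subgroup $\Omega_8^-(3){:}2$ of $O(R(W),q)$. For injectivity, suppose $g\in\Aut W$ lies in $\Ker\rho$, i.e.\ $M\circ g\cong M$ for every irreducible $W$-module $M$. In particular $g$ fixes the isomorphism class of every $S^a[x]=V_{a+K_{12}}[x]$; since $W=V_{K_{12}}^\tau$ is the fixed-point subVOA of the order-$3$ automorphism $\tau$ of the lattice VOA $V_{K_{12}}$, and the irreducible $W$-modules $S^a[x]$ together assemble into $V_{K_{12}}$ and its $V_{K_{12}}$-modules, one can use the general machinery relating $\Aut W$ to $N_{\Aut V_{K_{12}}}(\langle\tau\rangle)/\langle\tau\rangle$ (the analogue of the argument for $V_\Lambda^\tau$ via \cite{Sh04}, and Proposition~\ref{thm:ses:C_AutVL: C_OL}) to lift $g$ to an automorphism $\tilde g$ of $V_{K_{12}}$ normalizing $\langle\tau\rangle$. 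Because $g$ acts trivially on all isomorphism classes of irreducible $W$-modules, $\tilde g$ must act trivially on $K_{12}^*/K_{12}$ and hence (being an isometry of the rootless lattice $K_{12}$ normalizing $\langle\tau\rangle$ and inducing the identity on the discriminant group) lies in the inner part; combined with the vanishing of $\Hom(K_{12}/(1-\tau)K_{12},\Z_3)$-type contributions this forces $\tilde g\in\langle\tau\rangle$, so $g$ is trivial in $\Aut W$. I expect one can also argue injectivity more directly: an element of $\Ker\rho$ fixes every $S^a[x]$ and $T^a[x;i]$, so in particular it preserves each homogeneous piece and acts as a scalar on each irreducible $W$-submodule of $V_{K_{12}}$; matching these scalars with the VOA structure on $V_{K_{12}}=\bigoplus_x S^0[x]$ pins the element down to a lift of the identity isometry, which lies in $\langle\tau\rangle$ and hence is trivial on $W$.

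Next, for the image, the containment $\rho(\Aut W)\subseteq O(R(W),q)$ is already established in the paragraph preceding the proposition. The key point is to show that $\rho(\Aut W)$ has index exactly $2$ in $O(R(W),q)$ and is the subgroup of shape $\Omega_8^-(3){:}2$. For the lower bound on the image, I would exhibit enough automorphisms of $W$: the isometry group $O(K_{12})$ of shape $6.\Omega_6^-(3).2^2$ lifts (via Proposition~\ref{thm:ses:C_AutVL: C_OL} applied to $N_{\Aut V_{K_{12}}}(\tau)$, then quotienting by $\langle\tau\rangle$ and by $A^*$-type characters) to automorphisms of $W$, and together with the extra symmetries coming from the $\F_3^*$-action on modules (the characters of the grading group realizing $\tau$ as a simple-current extension datum) these generate a subgroup of $O(R(W),q)$ whose order is at least $|\Omega_8^-(3){:}2|$. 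Since $\Omega_8^-(3)$ is simple and of index $2$ (as $O_8^-(3)/\Omega_8^-(3)\cdot\{\pm1\}$ considerations give the relevant quotients), I would identify the generated subgroup with $\Omega_8^-(3){:}2$ by comparing orders and using that it contains the derived subgroup $\Omega_8^-(3)$ but not all of $O(R(W),q)$.

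For the upper bound — i.e.\ $\rho(\Aut W)\ne O(R(W),q)$ — the obstruction is a parity/discriminant invariant: not every orthogonal transformation of $(R(W),q)$ arises from an automorphism of $W$, because $\rho(g)$ must respect the finer module-theoretic structure, in particular the way fusion with the simple currents $S^a[0]$ (coming from $K_{12}^*/K_{12}$) interacts with the ``twisted'' directions, and the spinor-norm-type invariant distinguishing $O_8^-(3)$ from its index-$2$ subgroup. Concretely, I would pick a specific reflection in $O(R(W),q)$ lying outside $\Omega_8^-(3){:}2$ and show it cannot preserve the partition of $R(W)$ into the ``untwisted'' classes $S^a[x]$ and the ``twisted'' classes $T^a[x;i]$ in the manner a genuine $W$-automorphism would — e.g.\ it would have to send an untwisted module to a twisted one, which is impossible since $\rho(g)$ permutes the $S$-type classes among themselves (these are exactly the modules on which the grading by $\F_3$ realizing $V_{K_{12}}$ behaves appropriately). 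The main obstacle, as in \cite{LY13}, is precisely this last step: proving the image is \emph{not} the full orthogonal group, which requires pinning down an invariant of $\rho(g)$ beyond ``preserves $q$'' — namely that it respects the decomposition $R(W)=\{S^a[0]\}\perp\Span_{\F_3}\{S^0[1],T^0[2;1]\}$ up to the allowed moves, forcing it into the commutator subgroup extended by the diagram automorphism. Once both bounds are in place, $\rho(\Aut W)=\Omega_8^-(3){:}2$, and combined with injectivity this gives $\Aut V_{K_{12}}^\tau\cong\Omega_8^-(3){:}2$.
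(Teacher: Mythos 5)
Your injectivity argument breaks at the step ``combined with the vanishing of $\Hom(K_{12}/(1-\tau)K_{12},\Z_3)$-type contributions'': this group does not vanish. Since $\tau$ is fixed-point free of order $3$ on a rank-$12$ lattice, $\det(1-\tau)=3^{6}$, so $K_{12}/(1-\tau)K_{12}\cong\F_3^{6}$ and $\Hom(K_{12}/(1-\tau)K_{12},\F_3)\cong\F_3^{6}$. This $3^{6}$ is precisely the kernel of the surjection from the stabilizer $N$ of $S_A=\{S^0[0],S^0[1],S^0[2]\}$ onto $O(K_{12})/\langle\tau\rangle$ as in \eqref{Exact:N}; its elements induce the identity on $K_{12}^*/K_{12}$, so the real content of injectivity is to show that this $3^6$ still acts faithfully on $R(W)$ (it moves the twisted classes $T^a[x;i]$), which is the computation the paper imports from \cite[Proposition 2.9]{Sh04} and which your sketch omits. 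Your ``more direct'' variant has the same gap and additionally lets $g\in\Aut W$ act on $V_{K_{12}}$ before any lift has been produced.

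Both halves of your image argument also fail. For the lower bound, every automorphism you propose to exhibit (lifts of $O(K_{12})$ and the character group $\Hom(K_{12}/(1-\tau)K_{12},\F_3)$) stabilizes $S_A$ and hence lies in the subgroup $N$ of \eqref{Exact:N}, whose order $3^{6}\cdot|2.\Omega_6^-(3).2^2|$ is far smaller than $|\Omega_8^-(3){:}2|$; the additional automorphisms mixing untwisted and twisted classes are exactly the nontrivial construction of \cite{LY13}, which must be cited as input (as the paper does). For the upper bound, your proposed invariant is not an invariant: automorphisms of $W$ do \emph{not} preserve the partition of $R(W)$ into $S$-type and $T$-type classes. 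If they did, $\Aut W$ would stabilize the index-$3$ subgroup $\{S^a[x]\}$ of $R(W)$ and could not induce $\Omega_8^-(3){:}2$; concretely, $S^0[1]$ and $T^0[2;1]$ are both non-zero singular vectors of $(R(W),q)$ and are therefore conjugate under $\Omega_8^-(3)$. The paper's actual exclusion of the full orthogonal group is a short center argument: $O(R(W),q)$ is the direct product of its center $\langle h\rangle\cong\Z_2$ with $\Omega_8^-(3){:}2$, and $h$ acts on $R(W)$ by $M\mapsto M'$ (the fusion inverse), so $h$ would stabilize $S_A$ and be a non-trivial element of $Z(N)$; but $Z(N)=1$ because $Z(O(K_{12})/\langle\tau\rangle)\cong\Z_2$ acts non-trivially on the normal $3^6$ in \eqref{Exact:N}. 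Some such argument is needed in place of your spinor-norm heuristic.
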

\begin{proof} Recall that $V_{K_{12}}$ is a simple current extension of $W$ graded by an cyclic group $A$ of order $3$.
Set $N=N_{\Aut V_{K_{12}}}(A^*)/A^*$ and $S_A=\{S^0[0],S^0[1],S^0[2]\}$.
By Theorem \ref{NCthm} and $V_{K_{12}}\cong \bigoplus_{x=0}^2 S^0[x]$ as $W$-modules, we obtain
\begin{equation}
N=\{g\in \Aut W\mid S_A\circ g=S_A \}.\label{Eq:N}
\end{equation}
Clearly, $\Ker\varphi$ is a subgroup of $N$.
By Proposition \ref{thm:ses:C_AutVL: C_OL} and the fact that $O_3(O(K_{12}))=\langle\tau\rangle$, we obtain an exact sequence
\begin{equation}
1\to\Hom( K_{12}/ ( 1 - \tau) K_{12}, \mathbb{F}_3)\to N\to O(K_{12})/\langle\tau\rangle\to 1.\label{Exact:N}
\end{equation}
Since $O(K_{12})/\langle\tau\rangle$ acts faithfully on $K_{12}^*/K_{12}$, one can see that $N$ also acts faithfully on $R(W)$ by a similar calculation as in \cite[Proposition 2.9]{Sh04}.
Hence $\Ker\varphi$ is trivial.

Next we determine $\Aut W$.
In \cite{LY13}, a subgroup $H$ of $\Aut W$ of shape $\Omega_8^-(3){:}2$ was constructed.
Since $|O(R(W),q)/(\Omega_8^-(3){:}2)|=2$, we have $\Aut W=H$ or $\Aut W\cong O(R(W),q)$.
Suppose that $\Aut W\cong O(R(W),q)$.
Then $\Aut W$ has the central element $h$ of order $2$, and for $M\in R(W)$, $M\circ h=M'$, where $M'\in R(W)$ such that $M\boxtimes M'=W$.
Hence $h$ preserves $\{S^0[0],S^0[1],S^0[2]\}$, and by \eqref{Eq:N}, we have $h\in N$.
Since $Z(O(K_{12})/\langle\tau\rangle)\cong\Z_2$ and its action on $\Hom( K_{12}/ ( 1 - \tau) K_{12}, \mathbb{F}_3)$ is non-trivial in \eqref{Exact:N}, we have $Z(N)=1$, which is a contradiction.
Thus we obtain $\Aut W=H$.
\end{proof}

\begin{rmk} In fact, the automorphism group of $V_{K_{12}}^\tau$ of shape $\Omega^-_8(3){:}2$ is isomorphic to ${}^+\Omega^-(8,3)$ (see \cite[Theorem 5.64]{LY13}).
\end{rmk}

\section{Automorphism group of the holomorphic VOA $V^\sharp$}
In this section, we review a construction of the holomorphic VOA $V^\sharp$ from \cite{Mi13a} and construct some $3$-local subgroups of the automorphism group $\Aut V^\sharp$ of $V^\sharp$.
By using those subgroups, we will show that $\Aut V^\sharp$ is finite.
In addition, we will show that $\Aut V^\sharp$ is isomorphic to the Monster simple group based on a $3$-local characterization of the Monster simple group.

\subsection{Holomorphic VOA $V^\sharp$}
In this subsection, we review the construction of the holomorphic VOA $V^\sharp$ and its properties from \cite{Mi13a}.

Let $ \Lambda$ denote the Leech lattice and let $ \tau$ be a fixed-point free isometry of $ \Lambda$ of order $3$. 
There exists a lift of $\tau$ of order $3$ in $\Aut V_\Lambda$, which we denote by the same symbol $\tau$.
The orbifold VOA $V_\Lambda^\tau$ is rational and $C_2$-cofinite \cite{TY13}.
It is well-known \cite{DLM00} that the Leech lattice VOA $ V_ \Lambda$ has exactly one irreducible $ \tau^i$-twisted module for $ i = 1,2$. We denote these twisted modules by $V_\Lambda ^{T_1}(\tau)$ and $ V_ \Lambda^{T_2}(\tau^2)$, respectively.
We also define the $ V_\Lambda^\tau$-module
\begin{align}
 V^ \sharp :=&   V_ \Lambda^\tau \oplus  (V_ \Lambda ^{T_1}(\tau))_ \mathbb{Z} \oplus  (V_ \Lambda ^{T_2}(\tau^2))_ \mathbb{Z},\label{Eq:Vsharp}
\end{align}
where $ (V_\Lambda ^{T_i}(\tau^i))_ \mathbb{Z} $ is the subspace of $ V_\Lambda ^{T_i}(\tau^i)$ of integral weights.

\begin{thm}{\bf (\cite{Mi13a})}\label{Thm:sharp} The $V_\Lambda^\tau$-module $V^\sharp$ has a VOA structure as a $\Z_3$-graded simple current extension of $V_ \Lambda^\tau$.
In addition, $V^\sharp$ is of CFT-type, $C_2$-cofinite, holomorphic and $V^\sharp_1=0$.
\end{thm}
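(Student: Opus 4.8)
The plan is to realize $V^\sharp$ as a $\Z_3$-graded simple current extension of $V_\Lambda^\tau$ and then to read off all four asserted properties from the machinery reviewed in Section 2 together with an analysis of lowest weights. First, one needs the structure of $R(V_\Lambda^\tau)$. Since $V_\Lambda$ is holomorphic and $\tau$ has order $3$, the orbifold theory (\cite{DLM00,TY13,CL14}) shows that $V_\Lambda^\tau$ has exactly nine inequivalent irreducible modules: the three $\tau$-eigenspaces $V_\Lambda^{(j)}$ of $V_\Lambda$ (with $V_\Lambda^{(0)}=V_\Lambda^\tau$), together with, for $i=1,2$, the three eigenspaces of a lift of $\tau$ acting on $V_\Lambda^{T_i}(\tau^i)$; all of these are simple currents (\cite{CL14}). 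Keeping track of the twist index identifies $R(V_\Lambda^\tau)$ with $\Z_3\times\Z_3$, the fusion product being the group law and the untwisted sector being one of the two $\Z_3$-factors. Exactly as in Proposition \ref{thm:O83minus}, the map $M\mapsto\wt(M)\bmod\Z$ is a nonsingular quadratic form on $R(V_\Lambda^\tau)\cong\Z_3^2$; it is of plus type, the untwisted line being isotropic because $V_\Lambda^{(j)}\subset V_\Lambda$ has integral weights. A computation with the standard vacuum-anomaly formula for the fixed-point-free order-$3$ isometry $\tau$ of the rank-$24$ lattice $\Lambda$ then shows that in each twisted module $V_\Lambda^{T_i}(\tau^i)$ the three eigenspace weights occupy the three distinct cosets of $\Z$ in $\tfrac{1}{3}\Z$, the integral one having lowest weight $\ge 2$; that eigenspace is $(V_\Lambda^{T_i}(\tau^i))_\Z$.

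Next I would construct the VOA. The summands of $V^\sharp$ are $V_\Lambda^{(0)}=V_\Lambda^\tau$ and $(V_\Lambda^{T_i}(\tau^i))_\Z$ for $i=1,2$; by the first paragraph and additivity of weights mod $\Z$ under fusion these three classes are closed under fusion and form an isotropic line $I\cong\Z_3$ in $R(V_\Lambda^\tau)$ — the second isotropic line, the first one giving $V_\Lambda$ itself. Since $V_\Lambda^\tau$ is simple, rational, $C_2$-cofinite and of CFT-type (\cite{TY13}), the existence-and-uniqueness theory for simple current extensions (\cite{SY03,Sh04}; see also \cite{HKL}) applies: because $I$ is cyclic and every summand has integral lowest weight, the relevant obstruction vanishes, and $V^\sharp=\bigoplus_{\alpha\in I}V(\alpha)$ carries a unique structure of simple $\Z_3$-graded VOA extending $V_\Lambda^\tau$. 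Concretely this amounts to choosing nonzero intertwining operators of the three types $V_\Lambda^\tau\times(V_\Lambda^{T_i}(\tau^i))_\Z$, $(V_\Lambda^{T_1}(\tau))_\Z\times(V_\Lambda^{T_1}(\tau))_\Z$ and $(V_\Lambda^{T_1}(\tau))_\Z\times(V_\Lambda^{T_2}(\tau^2))_\Z$ and verifying commutativity and associativity; this can be carried out explicitly from the Dong--Lepowsky realization of the twisted modules \cite{Lep85,Dl96}, which is the route of \cite{Mi13a}. This verification — that the intertwining operators among the twisted sectors can be normalized to produce an associative, commutative product with $\1$ as identity — is the one genuinely hard step; everything downstream is formal.

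Finally I would read off the remaining properties. That $V^\sharp$ is $\Z_{\ge 0}$-graded, hence a bona fide VOA, follows because all three summands have nonnegative integral weights. We get $(V^\sharp)_0=(V_\Lambda^\tau)_0=\C\1$ and $(V^\sharp)_1=0$: indeed $\tau$ acts without nonzero fixed points on $(V_\Lambda)_1\cong\h$, so $(V_\Lambda^\tau)_1=0$, and by the first paragraph the twisted summands contribute nothing in weights $0$ and $1$. A simple current extension of a rational, $C_2$-cofinite VOA is again rational and $C_2$-cofinite (\cite{HKL}), so $V^\sharp$ is both. For holomorphicity, the irreducible $V^\sharp$-modules correspond — as in the totally singular subspace description used in the introduction for $(V_{K_{12}}^\tau)^{\otimes 2}$ — to the quotient $I^\perp/I$; since the quadratic form on $R(V_\Lambda^\tau)\cong\Z_3^2$ is nonsingular of rank $2$ and $|I|=3$, we have $I^\perp=I$, so $V^\sharp$ is its own unique irreducible module, i.e.\ it is holomorphic. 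As noted, the main obstacle in this program is the associativity/commutativity check of the second paragraph, which is precisely the content of \cite{Mi13a}.
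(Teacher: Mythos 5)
This theorem is imported verbatim from Miyamoto \cite{Mi13a}: the paper gives no proof of its own, so there is no internal argument to compare yours against line by line. Your outline is consistent with the machinery the paper does develop around the statement --- the nine simple-current irreducibles of $V_\Lambda^\tau$ and the fact that only the two integral-weight twisted eigenspaces have lowest weight $2$ are exactly what is used in the proof of Proposition \ref{prop:2.6}, and your quadratic-form-on-$R(V_\Lambda^\tau)\cong\Z_3^2$ picture, with $V_\Lambda$ and $V^\sharp$ as the two singular lines, is the rank-$2$ analogue of Proposition \ref{thm:O83minus} and Lemma \ref{Lem:Omega83}. The weight computations (vacuum anomaly $4/3$ for a fixed-point-free order-$3$ twist in rank $24$, hence twisted contributions only in degrees $\ge 2$, hence $(V^\sharp)_1=0$ and CFT-type), the rationality/$C_2$-cofiniteness via \cite{TY13,HKL}, and the holomorphicity count via $I^\perp=I$ are all correct. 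The one genuine gap is the one you yourself flag: the sentence ``because $I$ is cyclic and every summand has integral lowest weight, the relevant obstruction vanishes'' is an assertion, not an argument --- integrality of the weights alone does not formally dispose of the cocycle/associativity obstruction for a $\Z_3$-graded extension, and verifying that the intertwining operators among the twisted sectors can be normalized compatibly is precisely the content of \cite{Mi13a}. Since the paper delegates exactly that step to the same reference, your proposal matches the paper's treatment in substance; just be aware that, read as a standalone proof, it reduces the theorem to \cite{Mi13a} rather than proving it.
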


\subsection{$3$-local characterization of the Monster simple group}
In this subsection, we recall a $3$-local characterization of the Monster simple group from \cite{SS10}. 

\begin{thm}[{\cite[Theorem  1.1]{SS10}}]\label{thm:3local_monster}
   Let $G$ be a finite group and $ S$ a Sylow $3$-subgroup of $G$.
Let $ H_1$ and $H_2$ be subgroups of $G$ containing $S$ such that
\begin{enumerate}[{\rm (i)}]
   \item \label{3Monster:1} $H_1 = N_G( Z( O_3(H_1)))$, $ O_3( H_1)$ is an  extraspecial group of order $ 3^{13}$,\\ $H_1/{ O_3 (H_1)} \cong 2 \Suz{:} 2$ and $C_{ H_1} ( O_3 ( H_1)) = Z( O_3(H_1))$;
   \item\label{3Monster:2} $ H_2/ O_3( H_2) \cong \Omega_8^-(3 ).2$, $ O_3( H_2)$ is an elementary abelian group of order $ 3^8$ and $ O_3 (H_2)$ is a natural $ H_2/ O_3( H_2)$-module;
   \item\label{3Monster:3}  $ ( H_1 \cap H_2) / O_3 ( H_2)$ is an extension of an elementary abelian group of order $ 3^6$ by $ 2 \cdot PSU_4(3){:}2^2$.
  \end{enumerate}
Then $G$ is isomorphic to the largest sporadic simple group, the Monster.
\end{thm}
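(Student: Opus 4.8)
Since this is precisely \cite[Theorem 1.1]{SS10}, I will only indicate the strategy behind such a recognition theorem; the complete argument is the content of that paper. The statement characterizes $\M$ from $3$-local data, so the tool I would use is the amalgam method, in three stages: first reconstruct the amalgam $\mathcal{A}$ consisting of $H_1$, $H_2$ and $B:=H_1\cap H_2$ and show it is determined up to isomorphism by the hypotheses; then show that any group $G=\la H_1,H_2\ra$ generated by a copy of $\mathcal{A}$ must be finite and simple; finally identify $G$ with the Monster.

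For the first stage I would fix $S\in\Syl_3(G)$ and reduce to $G=\la H_1,H_2\ra$. Hypothesis (\ref{3Monster:1}) forces $O_3(H_1)$ to be extraspecial of order $3^{13}$ with $H_1/O_3(H_1)\cong 2\Suz{:}2$ acting on $O_3(H_1)/Z(O_3(H_1))$ as the self-dual $12$-dimensional $\F_3$-module coming from the complex $6$-dimensional Suzuki module; rigidity of this module determines $H_1$ up to isomorphism. Likewise (\ref{3Monster:2}) makes $O_3(H_2)\cong 3^8$ the natural orthogonal $\F_3$-module for $\Omega_8^-(3).2$, which determines $H_2$. The crucial compatibility is (\ref{3Monster:3}): $B$ modulo $O_3(H_2)$ is an extension of $3^6$ by $2\cdot PSU_4(3){:}2^2$, and one checks, using the $\F_3$-module theory of $2\cdot PSU_4(3)$ and the way $O_3(H_1)$ and $O_3(H_2)$ sit inside $S$, that $B$ embeds into $H_1$ and into $H_2$ in an essentially unique way matching the two $3$-radicals. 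The outcome would be that $\mathcal{A}$ is isomorphic to the amalgam of the corresponding maximal $3$-local subgroups of $\M$.

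The second stage is the hard part, and it is where I expect the main obstacle: the universal completion of $\mathcal{A}$ is infinite, so finiteness of $G$ cannot be read off from $\mathcal{A}$ alone and genuinely geometric input is needed. I would derive a third $3$-local subgroup and hence a coset geometry on which $G$ acts flag-transitively, then run the standard amalgam machinery---Goldschmidt's lemma, analysis of the critical distance in the coset graph, pushing-up arguments---to bound the geometry and establish its simple connectivity, or else invoke an Ivanov--Meierfrankenfeld-type characterization of the $3$-local geometry of $\M$. This forces $G$ to be finite simple carrying that geometry. In the last stage, with $G$ now finite simple, I would extract an involution centralizer (or a small faithful module) of the expected shape and conclude $G\cong\M$ by the uniqueness of the Monster. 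Controlling the completion of $\mathcal{A}$ and ruling out spurious ``Monster-like'' completions is precisely the part that requires the full analysis of \cite{SS10}.
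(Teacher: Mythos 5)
The paper offers no proof of this theorem: it is imported verbatim from \cite[Theorem 1.1]{SS10} and used as a black box, so your decision to defer the entire argument to that reference is exactly what the paper does. Your accompanying sketch of a recognition-theorem strategy is supplementary commentary rather than a proof, and (for what it is worth) the actual route in \cite{SS10} passes from the given $3$-local data to the construction of a $2$-central involution centralizer of shape $2^{1+24}.Co_1$ and then to a known identification of $\M$, but none of that is required here.
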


We will construct explicitly certain subgroups $H_1$ and $H_2$ of $\Aut V^\sharp$ such that $H_1$ and $H_2$ satisfy the hypotheses (i), (ii) and (iii) of the theorem above.

\subsection{A subgroup of the shape $3^{1+12}( 2\Suz{:}2)$ of $\Aut V^\sharp$}

In this subsection, we construct a subgroup of $ \Aut V^ \sharp$ satisfying (i) of Theorem \ref{thm:3local_monster}.

Since $V^\sharp$ is a $\Z_3$-graded simple current extension of $V_\Lambda^\tau$ as in \eqref{Eq:Vsharp}, there is a natural automorphism $\tau'$ of order $3$ which acts on $V^\sharp$ as $1$ on $V_ \Lambda^\tau $ , as $\xi$ on $  (V_ \Lambda ^{T_1}(\tau))_\mathbb{Z}$, and as $\xi^2$ on $  (V_ \Lambda ^{T_2}(\tau^2))_\mathbb{Z} $, where $\xi=\exp(2\pi\sqrt{-1}/3)$.

\begin{prop}\label{prop:2.6} Set $H_1=N_{\Aut V^\sharp} (\langle \tau' \rangle  )$.
Then $O_3(H_1)$ is an extra-special $3$-group of order $3^{13}$, $Z(O_3(H_1))=\langle\tau'\rangle$, $H_1/O_3(H)\cong 2\Suz{:}2$, and $C_{ H_1} ( O_3 ( H_1)) = Z( O_3(H_1))$.
In particular, the subgroup $H_1=N_{\Aut V^\sharp} (\langle \tau' \rangle  )$ satisfies the hypothesis \eqref{3Monster:1} of Theorem \ref{thm:3local_monster}.
\end{prop}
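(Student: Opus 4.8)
The plan is to realize $H_1$ as the image of the relevant normalizer under the lifting theory of Theorem \ref{NCthm}, applied to the $\Z_3$-graded simple current extension $V^\sharp=\bigoplus_{x\in\Z_3}V^\sharp(x)$ with $V^\sharp(0)=V_\Lambda^\tau$. First I would note that $\langle\tau'\rangle=A^*$ for the grading group $A=\Z_3$, so that $N_{\Aut V^\sharp}(\langle\tau'\rangle)=N_{\Aut V^\sharp}(A^*)$; hence by Theorem \ref{NCthm} we get a short exact sequence $1\to\langle\tau'\rangle\to H_1\to H_A\to 1$, where $H_A=\{h\in\Aut V_\Lambda^\tau\mid S_A\circ h=S_A\}$ and $S_A$ is the set of isomorphism classes $\{V_\Lambda^\tau,(V_\Lambda^{T_1}(\tau))_\Z,(V_\Lambda^{T_2}(\tau^2))_\Z\}$ (as $V_\Lambda^\tau$-modules). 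The twisted modules have non-integral lowest weight $\ne$ that of the untwisted ones, and $\tau'$-conjugation preserves lowest weights, so $H_A$ in fact equals $\Aut V_\Lambda^\tau$ up to the possible swap of $(V_\Lambda^{T_1}(\tau))_\Z$ and $(V_\Lambda^{T_2}(\tau^2))_\Z$; I expect to identify $H_A$ with $\Aut V_\Lambda^\tau$ itself (or an index-$2$ subgroup), and the text's claim that $\Aut V_\Lambda^\tau$ has shape $3^{12}.2\Suz{:}2$ then gives $|H_1|$.

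Next I would pin down $\Aut V_\Lambda^\tau$ and its structure. Using Proposition \ref{thm:ses:C_AutVL: C_OL} with $L=\Lambda$, $\nu=\tau$, $p=3$, one has the exact sequence $1\to\Hom(\Lambda/(1-\tau)\Lambda,\Z_3)\to N_{\Aut V_\Lambda}(\tau)\to N_{O(\Lambda)}(\tau)\to 1$. Since $\tau$ is fixed-point free of order $3$ on the Leech lattice, $\Lambda/(1-\tau)\Lambda\cong\F_3^{12}$, so the kernel is $3^{12}$; and $N_{O(\Lambda)}(\tau)/\langle\tau\rangle\cong 6\cdot\Suz{:}2/\langle\tau\rangle\cong 2\Suz{:}2$ is the well-known structure coming from the action of the normalizer of a fixed-point-free order-$3$ element of $Co_0$. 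Combining with the result (quoted from \cite{Sh04}, as in the text) that $\Aut V_\Lambda^\tau\cong N_{\Aut V_\Lambda}(\langle\tau\rangle)/\langle\tau\rangle$, I get $\Aut V_\Lambda^\tau$ of shape $3^{12}.2\Suz{:}2$. Lifting through the extension $1\to\langle\tau'\rangle\to H_1\to\Aut V_\Lambda^\tau\to 1$ then yields a group of shape $3^{1+12}.2\Suz{:}2$.

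The remaining—and most delicate—part is verifying the internal group-theoretic structure demanded by \eqref{3Monster:1}: that $O_3(H_1)$ is \emph{extraspecial} of order $3^{13}$ with center exactly $\langle\tau'\rangle$, that $H_1/O_3(H_1)\cong 2\Suz{:}2$, and that $C_{H_1}(O_3(H_1))=Z(O_3(H_1))$. For this I would let $N=$ the preimage in $H_1$ of $O_3(\Aut V_\Lambda^\tau)=3^{12}$; then $N$ is a $3$-group of order $3^{13}$ containing $\langle\tau'\rangle$ in its center, and $N=O_3(H_1)$ because $H_1/N\cong 2\Suz{:}2$ has trivial $O_3$ (as $\Suz$ is simple) and $N$ is normal. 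To see $N$ is extraspecial with $Z(N)=\langle\tau'\rangle$ I would compute the commutator pairing $N/\langle\tau'\rangle\times N/\langle\tau'\rangle\to\langle\tau'\rangle$: elements of $N/\langle\tau'\rangle\cong\Hom(\Lambda/(1-\tau)\Lambda,\F_3)$ act on $V^\sharp$ through the twisted sectors, and their commutators are governed by the non-degenerate symplectic-type form coming from the pairing between $\Lambda/(1-\tau)\Lambda$ and the twisted-module structure (essentially the form recorded in the theory of lifts of automorphisms; cf. the minus-type quadratic form on $R(V_{K_{12}}^\tau)$ in Proposition \ref{thm:O83minus}). Non-degeneracy of this form forces $Z(N)=\langle\tau'\rangle$ and $[N,N]=\langle\tau'\rangle$, i.e. $N$ is extraspecial of order $3^{13}$; and since $2\Suz{:}2$ acts on $N/\langle\tau'\rangle\cong\F_3^{12}$ via its natural (irreducible) $12$-dimensional $\F_3$-representation, $C_{H_1}(N)$ projects trivially into $H_1/N$, giving $C_{H_1}(N)=Z(N)=\langle\tau'\rangle$. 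Finally, $H_1=N_{\Aut V^\sharp}(\langle\tau'\rangle)=N_{\Aut V^\sharp}(Z(O_3(H_1)))$ by construction. The main obstacle I anticipate is the explicit commutator computation in $N$ — verifying non-degeneracy of the relevant $\F_3$-bilinear form, which requires carefully tracking how the two twisted sectors $(V_\Lambda^{T_i}(\tau^i))_\Z$ transform under the order-$3$ automorphisms coming from $\Hom(\Lambda/(1-\tau)\Lambda,\F_3)$; I would handle this via the explicit cocycle description of twisted modules over lattice VOAs in \cite{Lep85,DLM00}, or by invoking the abstract form attached to lifts of automorphisms as in \cite{Sh04}.
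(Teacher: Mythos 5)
Your overall skeleton coincides with the paper's: Theorem \ref{NCthm} applied to $V^\sharp$ over $V_\Lambda^\tau$ gives $1\to\langle\tau'\rangle\to H_1\to H_A\to1$; a second application of Theorem \ref{NCthm} (to $V_\Lambda$ over $V_\Lambda^\tau$) together with Proposition \ref{thm:ses:C_AutVL: C_OL} and the Atlas fact $N_{O(\Lambda)}(\langle\tau\rangle)/\langle\tau\rangle\cong 2\Suz{:}2$ identifies $\Aut V_\Lambda^\tau$ with a group of shape $3^{12}.2\Suz{:}2$; and the faithfulness of the $2\Suz{:}2$-action on $\F_3^{12}$ yields $C_{H_1}(O_3(H_1))=Z(O_3(H_1))$. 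Two small corrections on this part: the members of $S_A$ other than $V_\Lambda^\tau$ do \emph{not} have non-integral lowest weight --- $(V_\Lambda^{T_i}(\tau^i))_\Z$ has lowest weight $2$ by construction, and the paper's actual argument is that these are the \emph{only} two irreducibles of lowest weight $2$ among the nine irreducible $V_\Lambda^\tau$-modules, so $S_A$ is $\Aut V_\Lambda^\tau$-invariant as a set; and your hedge about $H_A$ possibly having index $2$ is unnecessary, since $H_A$ is defined by $S_A\circ h=S_A$ as sets, so swapping the two twisted sectors is allowed and $H_A=\Aut V_\Lambda^\tau$ exactly.

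The genuine divergence --- and the gap --- is at the extraspecialness of $O_3(H_1)$. You propose to verify non-degeneracy of the commutator pairing $O_3(H_1)/\langle\tau'\rangle\times O_3(H_1)/\langle\tau'\rangle\to\langle\tau'\rangle$ by an explicit cocycle computation in the twisted sectors, but you do not carry it out, and that non-degeneracy is precisely equivalent to the statement $Z(O_3(H_1))=\langle\tau'\rangle$ being proved; as written the plan restates the goal rather than establishing it. The paper avoids the computation entirely: since $H_1$ acts irreducibly (via $2\Suz{:}2$) on $O_3(H_1)/\langle\tau'\rangle\cong\F_3^{12}$ and preserves $Z(O_3(H_1))$, the only alternatives are $Z=\langle\tau'\rangle$ or $Z=O_3(H_1)$; in the latter case $O_3(H_1)$ would be elementary abelian of order $3^{13}$, and its action on the twisted sector $(S[\tau^i]\otimes T_i)_\Z$ is concentrated on the $3^6$-dimensional space $T_i$, forcing some $h\neq1$ to act trivially on $T_i$, hence on the faithful $V_\Lambda^\tau$-module $(S[\tau^i]\otimes T_i)_\Z$, hence on $V_\Lambda^\tau$ and on all of $V^\sharp$ --- a contradiction. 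If you want to keep your route, you must actually exhibit two lifts in $O_3(H_1)$ of elements of $\Hom(\Lambda/(1-\tau)\Lambda,\F_3)$ whose commutator is a nontrivial power of $\tau'$ for each nonzero coset, which amounts to tracking the projective action of $\Hom(\Lambda/(1-\tau)\Lambda,\F_3)$ on the irreducible $\hat{\Lambda}_\tau$-module $T_i$; the paper's indirect argument is substantially shorter.
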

 \begin{proof}
 Let $A$ be the cyclic group of order $3$ corresponding the grading of $V^\sharp$ as a simple current extension of $V_\Lambda^\tau$.
 Note that $A^*=\langle\tau'\rangle$.
Let $ S_A$ be the set of isomorphism classes of three irreducible $V_\Lambda^\tau$-modules $  V_\Lambda^\tau$,  $(V_ \Lambda ^{T_1}(\tau))_\mathbb{Z}$, and $(V_ \Lambda ^{T_2}(\tau^2))_\mathbb{Z}$.
Set $H_A=\{h\in\Aut  V_\Lambda^\tau \mid \ S_A\circ h=S_A\}.$
Recall that $V_\Lambda^\tau$ has exactly nine irreducible modules and only $(V_ \Lambda ^{T_1}(\tau))_ \mathbb{Z}$ and $(V_ \Lambda ^{T_2}(\tau^2))_\mathbb{Z} $  have the lowest weight $2$ (see \cite{Mi13a,TY13}). Hence $ S_A$ is invariant under the action of  $ \Aut V_\Lambda^\tau$.
Thus we obtain $H_A=\Aut V_\Lambda^\tau$.
By Theorem \ref{NCthm}, we obtain a short exact sequence
\begin{equation}
1\longrightarrow \langle\tau'\rangle\longrightarrow H_1
\longrightarrow \Aut V_\Lambda^\tau\longrightarrow1.\label{Eq:H1}
\end{equation}

Let us determine $\Aut V_\Lambda^\tau$.
For $i=1,2$, let $V_\Lambda^\tau[i]$ be the eigenspace of $\tau$ in $V_\Lambda$ with eigenvalue $\xi^i$.
By \cite{Mi13a}, 
\begin{align*}
V_ \Lambda = V_\Lambda^\tau \oplus V_ \Lambda^\tau[1]  \oplus V_ \Lambda^\tau[2]
\end{align*}
is a $\Z_3$-graded simple current extension of $V_\Lambda^\tau$.
Let $B$ be the cyclic group of order $3$ corresponding to the $\Z_3$-grading.
Note that $B^*=\langle\tau\rangle$.
Let $ S_B$ be the set of isomorphism classes of irreducible $V_\Lambda^\tau$-modules $   V_\Lambda^\tau$, $V_ \Lambda^\tau[1]$, and $V_ \Lambda^\tau[2]$.
Set $H_B := \{ h \in \Aut  V_ \Lambda^ \tau \mid S_B \circ h = S_B \}$.
Since $ V_ \Lambda^\tau[1]  $ and $ V_ \Lambda^\tau[2]$ are the only irreducible modules of $ V_ \Lambda^\tau$ with the  lowest weight $1$,
we have $H_B = \Aut V_ \Lambda ^ \tau$.
Then by Proposition \ref{NCthm}, we have a short exact sequence
\begin{align*}
 0 \longrightarrow \langle {\tau}\rangle  \longrightarrow N_{ \Aut V_ \Lambda } ( \langle {\tau}\rangle)  \longrightarrow  \Aut V_ \Lambda ^ \tau \longrightarrow 0.
\end{align*}
Hence by Proposition \ref{thm:ses:C_AutVL: C_OL}, we obtain
\begin{equation}
\Aut V_ \Lambda ^ \tau \cong N_{\Aut V_\Lambda}(\langle {\tau} \rangle)/\langle {\tau}\rangle\cong 3^{12}.N_{O(\Lambda)}(\langle \tau\rangle)/\langle \tau\rangle.\label{Eq:aut1}
\end{equation}
According to \cite[p180]{Atlas}, we have $N_{O(\Lambda)}(\langle\tau\rangle)/\langle \tau\rangle\cong 2\Suz{:}2$.
By \eqref{Eq:H1} and \eqref{Eq:aut1}, $O_3(H_1)$ is a $3$-group of order $3^{13}$ and $H_1/O_3(H_1)\cong 2\Suz{:}2$.

Next, we will show that the subgroup $O_3(H_1)$ is extra-special.
Let $Z$ be the center of $O_3(H_1)$.
Note that $O_3(H_1)/\langle\tau'\rangle$ is an elementary abelian $3$-group of order $3^{12}$ and that $\tau'\in Z$.
Hence it suffices to show that $Z=\langle\tau'\rangle$.
Since the action of $H_1$ on $O_3(H_1)/\langle\tau'\rangle$ by the conjugation is irreducible and it preserves $Z$, we have $Z=O_3(H_1)$ or $Z=\langle\tau'\rangle$. 

Let us consider the action of $O_3(H_1)$ on the $V_\Lambda^\tau$-submodule $(V_{\Lambda}^{T_i}(\tau^i))_\Z = (S[\tau^i]\otimes T_i)_\Z$ of $V^\sharp$, where $S[\tau^i]$ is a $\tau^i$-twisted Heisenberg algebra and $T_i$ is a $3^{6}$-dimensional irreducible module over $\C$ for the $\tau^i$-twisted central extension of $\Lambda$ by $\Z/6\Z$ associated to the commutator map in \cite[Remark 2.2]{Dl96}.
Clearly, $O_3(H_1)$ preserves $(V_{\Lambda}^{T_i}(\tau^i))_\Z$.
Let $g\in O_3(H_1)$.
Then $g=1$ on the subspace $M(1)^\tau$ of $V_\Lambda^\tau$, where $M(1)$ is the Heisenberg subVOA of $V_\Lambda$.
Hence $g$ acts on $T_i$ by $g(x\otimes t)=x\otimes g(t)$, where $x\in S[\tau]$ and $t\in T_i$.

Suppose $Z=O_3(H_1)$. Then $O_3(H_1)$ is an elementary abelian $3$-group of order $3^{13}$ by the irreducibility.
It follows from $\dim T_i=3^{6}$ that there exists $1\neq h\in O_3(H_1)$ such that $h=1$ on $T_i$.
Then $h=1$ on $(S[\tau]\otimes T_i)_\Z$.
Since $V_\Lambda^\tau$ is simple, $(S[\tau]\otimes T_i)_\Z$ is a faithful $V_\Lambda^\tau$-module.
Thus $h=1$ on $V_\Lambda^\tau$.
In addition, $V^\sharp$ is a $\Z_3$-graded simple current extension of $V_\Lambda^\tau$, we have $h=1$ on $V^\sharp$, which contradicts the choice of $h$.
Therefore $Z=\langle\tau'\rangle$, and $O_3(H_1)$ is an extra-special $3$-group $3^{1+12}$.

Finally, let us show $C_{H_1}(O_3(H_1))=Z(O_3(H_1))$.
The conjugate action of $H_1/O_3(H_1)$ on $O_3(H_1)/Z(H_1)$ is faithful.
Hence $C_{H_1}(O_3(H_1))\subset Z(O_3(H_1))=\langle\tau'\rangle$.
Clearly, $Z(O_3(H_1))\subset C_{H_1}(O_3(H_1))$.
Thus we have $C_{H_1}(O_3(H_1))=Z(O_3(H_1))$.
\end{proof}

\subsection{A subgroup of the shape $3^8 .\Omega^-_8(3).2$}
In this subsection, we construct a subgroup of $\Aut V^\sharp$ satisfying the hypothesis (ii) of Theorem \ref{thm:3local_monster}.


Let us consider an explicit embedding of $K_{12}\perp K_{12}$ into the Leech lattice.  Recall from \cite[(A.3) and (A.4)]{KLY03} that the Leech lattice $\Lambda$ can be constructed as an overlattice of a sublattice $N$ of the type  $\sqrt{2}A_2^{12}$ and a pair of the $\Z_3$-code $\mathcal{D}$ and the $\F_4$-code $\mathcal{C}=\mathcal{H}\oplus \mathcal{H}$ as glue codes, where $\mathcal{H}$ is the hexacode and $\mathcal{D}$ is the ternary code given by the generator matrix 

\[
\left(
\begin{array}{cccccccccccc}
1&1 &0&0 &0&0 &1&0 &0&0 &0&0 \cr
1&2 &0&0 &0&0 &0&1 &0&0 &0&0 \cr
0&0 &1&1 &0&0 &0&0 &1&0 &0&0 \cr
0&0 &1&2 &0&0 &0&0 &0&1 &0&0 \cr
0&0 &0&0 &1&1 &0&0 &0&0 &1&0  \cr
0&0 &0&0 &1&2 &0&0 &0&0 &0&1
\end{array}
\right).
\]
More precisely, $N^*/N\cong\F_4^{12}\times \Z_3^{12}$ and $\Lambda/N\cong \mathcal{C}\times\mathcal{D}$, where $N^*$ is the dual lattice of $N$.
Notice that $\mathcal{D}$ is isomorphic to a sum of $3$ ternary tetra-codes and 
that the overlattice of $N$ corresponding to $\mathcal{C}$ is isomorphic to $K_{12}\perp K_{12}$. 

\begin{lem}\label{perK12}
There exists an isometry in $C_{O(\Lambda)}(\tau)$ such that 
it stabilizes $L_{\mathcal{C}}$ and permutes the two orthogonal copies of $K_{12}$. 
\end{lem}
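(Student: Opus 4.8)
The plan is to work entirely inside the concrete coordinate description of $\Lambda$ recalled just before the statement, namely $\Lambda$ as an overlattice of $N\cong\sqrt2 A_2^{12}$ glued by $\mathcal{C}\times\mathcal{D}$ with $\mathcal{C}=\mathcal{H}\oplus\mathcal{H}$, and with $L_\mathcal{C}\cong K_{12}\perp K_{12}$ the overlattice of $N$ corresponding to $\mathcal{C}$. The natural candidate for the desired isometry is the coordinate-block swap that interchanges the first six $\sqrt2 A_2$-blocks with the last six. First I would verify that this permutation $\sigma$ of the twelve $A_2$-blocks is an isometry of $N^*$: it permutes the orthogonal summands of $N=\sqrt2 A_2^{12}$, hence it lies in $O(N)$, and it induces the corresponding permutation on $N^*/N\cong\F_4^{12}\times\Z_3^{12}$.

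Next I would check that $\sigma$ preserves $\Lambda$, equivalently that the induced map on $N^*/N$ stabilizes the glue code $\mathcal{C}\times\mathcal{D}=(\mathcal{H}\oplus\mathcal{H})\times\mathcal{D}$. On the $\F_4^{12}$-part, swapping the two groups of six coordinates exchanges the two copies of the hexacode $\mathcal{H}$ in $\mathcal{H}\oplus\mathcal{H}$, so $\mathcal{C}$ is stabilized (as a set). On the $\Z_3^{12}$-part, since $\mathcal{D}$ is a direct sum of three ternary tetra-codes arranged compatibly with the block structure used in \cite[(A.3),(A.4)]{KLY03}, the same swap permutes the tetra-code summands and hence fixes $\mathcal{D}$ as a set; here I would just make the indexing explicit so that $\sigma$ is the block-swap that is simultaneously compatible with the hexacode decomposition and the tetra-code decomposition (this may require pairing the blocks in the right order, which is the one genuinely fiddly point). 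Granting this, $\sigma(\Lambda)=\Lambda$, so $\sigma\in O(\Lambda)$; and by construction $\sigma$ stabilizes $L_\mathcal{C}$ and interchanges the two orthogonal copies of $K_{12}$ inside it.

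It remains to arrange that $\sigma$ centralizes $\tau$. Recall $\tau$ is the fixed-point free order $3$ isometry of $\Lambda$, and in these coordinates one may take $\tau$ to act as a fixed-point free order $3$ isometry of each $\sqrt2 A_2$-block by the \emph{same} rule on every block (the triality-type rotation of $A_2$); concretely $\tau$ is the diagonal action $\tau_0^{\oplus 12}$ where $\tau_0\in O(\sqrt2 A_2)$ has order $3$ and no nonzero fixed vector. A block permutation commutes with a diagonal action that is identical on every block, so $\sigma\tau=\tau\sigma$, i.e. $\sigma\in C_{O(\Lambda)}(\tau)$. If the chosen model of $\tau$ in \cite{KLY03} is instead block-diagonal but not literally constant across blocks, I would either (i) conjugate by a permutation to make it constant on each of the two six-block halves, or (ii) observe that $\tau$ and $\sigma\tau\sigma^{-1}$ are both fixed-point free order $3$ isometries, hence conjugate in $O(\Lambda)$, and use that $C_{O(\Lambda)}(\tau)$ is large enough (it surjects onto $2\Suz{:}2$ by \cite{Atlas}) to absorb the discrepancy; the cleanest route, which I would present, is to simply fix the model of $\tau$ to be constant on blocks from the start, which is harmless since all such $\tau$ are conjugate.

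\textbf{Main obstacle.} The only real work is bookkeeping: making the identifications of the $\F_4^{12}$-coordinates with $\mathcal{H}\oplus\mathcal{H}$ and of the $\Z_3^{12}$-coordinates with the three tetra-codes simultaneously compatible with one and the same block-swap $\sigma$, and with a block-constant model of $\tau$. I do not expect any conceptual difficulty, only the need to pin down the coordinate conventions of \cite{KLY03} carefully enough that "swap the two halves" is manifestly an isometry of $\Lambda$ commuting with $\tau$ and acting as claimed on $L_\mathcal{C}\cong K_{12}\perp K_{12}$.
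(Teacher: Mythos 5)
Your overall strategy (exhibit an explicit half-swap of the twelve $\sqrt2A_2$-blocks, check that it preserves the glue code, and commute it with a block-constant $\tau$) is the same as the paper's, but the specific isometry you propose does not work, and the point you dismiss as ``fiddly bookkeeping'' is exactly where the argument fails. The plain block swap $s=(1,7)(2,8)\cdots(6,12)$ does \emph{not} stabilize $\mathcal{D}$. Your justification --- that the swap ``permutes the tetra-code summands'' --- misreads how the tetra-codes sit inside the twelve coordinates: in the generator matrix displayed before the lemma the three summands are supported on $\{1,2,7,8\}$, $\{3,4,9,10\}$ and $\{5,6,11,12\}$, so each summand meets \emph{both} halves in two coordinates, and the half-swap maps each summand's support to itself, acting on it as the double transposition exchanging its left and right pairs. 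That is not an automorphism of the tetra-code: for instance $s$ sends the generator $g_1=(1,1,0,0,0,0\,|\,1,0,0,0,0,0)$ to $(1,0,0,0,0,0\,|\,1,1,0,0,0,0)$, and matching the right half forces the combination $g_1+g_2$, whose left half is $(2,0,0,0,0,0)$, not $(1,0,0,0,0,0)$. Moreover, no re-pairing of the blocks can repair this: one checks that none of the coordinate permutations of the tetra-code $\{(a+b,a+2b,a,b)\mid a,b\in\mathbb{F}_3\}$ that interchange $\{1,2\}$ with $\{3,4\}$ preserve it, so a pure block permutation swapping the two halves can never stabilize $\mathcal{D}$.

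The paper's fix is to compose $s$ with the isometry $\varepsilon$ acting as $-1$ on blocks $7,\dots,12$. Since $-1$ is trivial on $\tfrac12\sqrt2A_2/\sqrt2A_2\cong\mathbb{F}_4$ but negates $2(\sqrt2A_2)^*/\sqrt2A_2\cong\Z_3$, the element $h'=\varepsilon s$ still exchanges the two copies of $\mathcal{H}$ (hence stabilizes $\mathcal{C}$ and swaps the two orthogonal copies of $K_{12}$), while on the ternary part it becomes a monomial rather than a permutation automorphism, and this one does stabilize $\mathcal{D}$ (e.g.\ $h'(g_1)=2g_1+2g_2$). Since $\varepsilon$ is also block-diagonal, it commutes with the block-constant $\tau$ just as $s$ does, so the remainder of your argument (the discussion of $\tau$ and of the action on $L_{\mathcal{C}}\cong K_{12}\perp K_{12}$) goes through verbatim once $s$ is replaced by $\varepsilon s$.
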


\begin{proof}
Let $s= (1, 7)(2, 8)(3, 9) (4, 10)(5,11) (6, 12)$ be the permutation of the $12$ coordinates of $N^*/N$ with respect to the generator matrix above. 
Let $\varepsilon$ be the automorphism of $N^*/N$ of order $2$ which acts as $1$ on $\{1,2,3,4,5,6\}$ and as $-1$ on $ \{7,8,9,10,11,12\}$. 
Then $h'=\varepsilon s$ stabilizes both codes $\mathcal{D}$ and $\mathcal{C}$ and induces a transposition on the two copies of $\mathcal{H}$ in $\mathcal{C}$. 
Hence, it induces an isometry $h\in O(\Lambda)$ such that $h(L_{\mathcal{C}})=L_{\mathcal{C}}$ and it acts on  the two orthogonal copies of $K_{12}$ as a transposition. 

By viewing $\tau$ as the isometry of $\Lambda$ of order $3$ induced from that of the sublattice $N\cong \sqrt2A_2^{12}$, we know that $K_{12}\perp K_{12}$ is invariant under $\tau$.
Notice that the fixed-point free isometry of $\sqrt{2}A_2$ of order $3$ fixes the three cosets
in $2(\sqrt{2}A_2)^*/ \sqrt{2}A_2$ and acts as a cyclic permutation on the three non-zero elements of $\frac{1}2 \sqrt{2}A_2/ \sqrt{2}A_2 (\cong \mathbb{F}_4)$. Now it is easy to check that $h$ commutes with $\tau$.  
\end{proof}

By the embedding $K_{12}\perp K_{12}\subset\Lambda$, we have $(V_{K_{12}})^{\otimes2}\subset V_{\Lambda}$.
Then we have an embedding $(V_{K_{12}}^\tau)^{\otimes 2}\subset V_\Lambda^\tau$.
Set $W=V_{K_{12}}^\tau$ and $W^2=(V_{K_{12}}^\tau)^{\otimes 2}$.

\begin{lem}\label{Lem:2K12} There exists an automorphism $\mu $ of $V^\sharp$ such that $\mu (W^2)=W^2$, $\mu (W\otimes \1)=\1\otimes W$ and $\mu (\1\otimes W)=W\otimes \1$.
\end{lem}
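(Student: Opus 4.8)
The plan is to promote the lattice isometry $h$ constructed in Lemma~\ref{perK12} to an automorphism of $V^\sharp$. First I would recall that $h\in C_{O(\Lambda)}(\tau)$, so $h$ normalizes $\langle\tau\rangle$ inside $O(\Lambda)$; by Proposition~\ref{thm:ses:C_AutVL: C_OL} there is a lift $\tilde h\in N_{\Aut V_\Lambda}(\langle\tau\rangle)$ (and since $h$ centralizes $\tau$ we may arrange $\tilde h$ to centralize $\tau$ as well, after adjusting by an element of $\Hom(\Lambda/(1-\tau)\Lambda,\Z_3)$ if needed). Then $\tilde h$ restricts to an automorphism of $V_\Lambda^\tau$. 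Because $\tilde h$ commutes with $\tau$, it permutes the $\tau^i$-twisted $V_\Lambda$-modules among themselves — in fact, since $V_\Lambda^{T_i}(\tau^i)$ is the \emph{unique} irreducible $\tau^i$-twisted module, $\tilde h$ fixes the isomorphism class of each $V_\Lambda^{T_i}(\tau^i)$ — hence it stabilizes $S_A=\{V_\Lambda^\tau,(V_\Lambda^{T_1}(\tau))_\Z,(V_\Lambda^{T_2}(\tau^2))_\Z\}$, so $\tilde h\in H_A=\Aut V_\Lambda^\tau$ in the notation of Proposition~\ref{prop:2.6}. By the short exact sequence \eqref{Eq:H1} (equivalently Theorem~\ref{NCthm} applied to the simple current extension $V^\sharp$ of $V_\Lambda^\tau$), $\tilde h$ lifts to an automorphism $\mu$ of $V^\sharp$.

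Next I would check that $\mu$ has the desired action on $W^2=(V_{K_{12}}^\tau)^{\otimes 2}$. The embedding $W^2\subset V_\Lambda^\tau$ comes from the lattice embedding $K_{12}\perp K_{12}\subset\Lambda$ with its two orthogonal copies of $K_{12}$, via $V_{K_{12}\perp K_{12}}=V_{K_{12}}\otimes V_{K_{12}}$ and then taking $\tau$-fixed points (here I use that $\tau$ preserves each copy of $K_{12}$, as noted in Lemma~\ref{perK12}). By construction $h$ stabilizes $L_{\mathcal{C}}\cong K_{12}\perp K_{12}$ and swaps the two copies of $K_{12}$; therefore its lift $\tilde h$ stabilizes $V_{K_{12}\perp K_{12}}$ and interchanges the two tensor factors $V_{K_{12}}\otimes\1$ and $\1\otimes V_{K_{12}}$, up to an overall automorphism of $V_{K_{12}}$ coming from the choice of lift. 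Restricting to $\tau$-fixed points, $\mu$ stabilizes $W^2$, sends $W\otimes\1$ to $\1\otimes W$ and $\1\otimes W$ to $W\otimes\1$. This gives exactly the statement of Lemma~\ref{Lem:2K12}.

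The main point requiring care is the compatibility of the lift $\tilde h$ with the decomposition into tensor factors: a priori the lifting construction (which goes through the twisted central extension of $\Lambda$) need not respect the orthogonal splitting $\Lambda\supset K_{12}\perp K_{12}$, and the two copies of $K_{12}$ need not be lifted ``in the same way.'' However this does not affect the statement, since we only need $\mu$ to \emph{interchange} $W\otimes\1$ and $\1\otimes W$ as subVOAs, not to induce a specific isomorphism; any discrepancy is absorbed into an automorphism of $W$, which still maps $W$ onto $W$. Thus the only genuine obstacle is producing $\mu$ at all, i.e.\ verifying that $\tilde h$ lands in $H_A$; but this is immediate from the uniqueness of the irreducible $\tau^i$-twisted $V_\Lambda$-modules together with $\tilde h\tau=\tau\tilde h$, together with the observation that the lowest weights (hence the integral-weight subspaces) are preserved. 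A short remark: one should also note that since $\mu$ permutes $(V_\Lambda^{T_1}(\tau))_\Z$ and $(V_\Lambda^{T_2}(\tau^2))_\Z$ each to itself, $\mu$ normalizes $\langle\tau'\rangle$ and in fact lies in $H_1$, which will be convenient later, although this is not needed for the lemma itself.
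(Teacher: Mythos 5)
Your proposal is correct and follows exactly the route the paper intends: lift the isometry $h$ of Lemma~\ref{perK12} to $N_{\Aut V_\Lambda}(\langle\tau\rangle)$ via Proposition~\ref{thm:ses:C_AutVL: C_OL}, restrict to $\Aut V_\Lambda^\tau$, and lift to $\Aut V^\sharp$ via the exact sequence \eqref{Eq:H1} of Proposition~\ref{prop:2.6}; the paper's proof is just a one-line citation of these same three results. Your additional care about the lift respecting the splitting $K_{12}\perp K_{12}$ only up to an automorphism of each factor is a reasonable (and correctly resolved) point that the paper leaves implicit.
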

\begin{proof} It follows from Lemma \ref{perK12} and Propositions \ref{thm:ses:C_AutVL: C_OL} and \ref{prop:2.6}. 
\end{proof}

Under the fusion product, $R=R(W)$ forms an $8$-dimensional vector space over $\F_3$.
The (non-singular) quadratic form $q$ on $R$ of minus type was given in Section \ref{Sec:AutV}.
Since any irreducible $W^{2}$-module is a simple current, $V^\sharp$ is a simple current extension of $W^{2}$.
Let $S^\sharp$ be the subgroup of $R^2(=R\times R)$ such that $V^\sharp\cong\bigoplus_{(M_1,M_2)\in S^\sharp} M_1\otimes M_2$ as $W^{2}$-modules.
Note that the map $q^2: R^2\to\F_3$, $q^2(a,b)=q(a)+q(b)$ is a quadratic form on $R^2$.

First, we determine the automorphism group of $W^2$.

\begin{lem}\label{Lem:autV} The automorphism group of $W^2$ is $\Aut W\wr\Z_2$.
\end{lem}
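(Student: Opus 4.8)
The plan is to show that the automorphism group of the tensor product $W^2 = W \otimes W$ is the wreath product $\Aut W \wr \Z_2$. The general principle is that for a tensor product of two simple VOAs, any automorphism permutes the tensor factors up to the individual automorphism groups, provided the two factors are isomorphic and ``indecomposable'' in a suitable sense. First I would recall that $W = V_{K_{12}}^\tau$ is simple (being the fixed-point subVOA of the simple VOA $V_{K_{12}}$ under a finite-order automorphism) and of CFT-type, so $W_0 = \C\1$. Consequently $(W^2)_0 = \C(\1\otimes\1)$ and $(W^2)_1 = W_1 \otimes \1 \oplus \1 \otimes W_1$. Since $K_{12}$ is rootless and $\tau$ is fixed-point free, $W_1 = (V_{K_{12}})_1^\tau = \h^\tau = 0$, so the weight-one space of $W^2$ is trivial and gives no information; instead the structure must be recovered from the weight-two space, i.e. the Griess-type algebra, together with the conformal vector.

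The key step is to identify the two ``tensor-factor'' conformal vectors intrinsically. Write $\omega = \omega^{(1)} + \omega^{(2)}$ where $\omega^{(i)}$ is the conformal vector of the $i$-th copy of $W$ inside $W^2$; each $\omega^{(i)}$ has central charge $12$. I would argue that $\{\omega^{(1)},\omega^{(2)}\}$ is precisely the set of the two primitive idempotent summands of $\omega$ that are themselves Virasoro vectors of central charge $12$ commuting with each other and generating a rational, $C_2$-cofinite full subVOA — in other words, $\omega^{(1)}$ and $\omega^{(2)}$ are characterized as the only nonzero $\alpha,\beta$ with $\alpha+\beta=\omega$, $\alpha_1\alpha = 2\alpha$, $\beta_1\beta = 2\beta$, $\alpha_1\beta = 0$, and each $\alpha,\beta$ of central charge $12$. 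The uniqueness of this decomposition follows because $W$ has no proper nonzero conformal subvector of smaller central charge that would further split $\omega^{(i)}$: $W$ itself is, up to the labelling of its irreducibles, the smallest piece — more concretely, one checks using the fusion/character data from \cite{CL14} (or the fact that $V_{K_{12}}$ has no decomposition into commuting lattice subVOAs of smaller rank compatible with $\tau$, since $K_{12}$ is indecomposable as a rootless lattice with a fixed-point-free order-3 isometry) that no further orthogonal decomposition of $\omega^{(i)}$ exists. Hence any $g \in \Aut W^2$ must send the set $\{\omega^{(1)},\omega^{(2)}\}$ to itself, so either $g\omega^{(i)} = \omega^{(i)}$ for $i=1,2$, or $g$ swaps them.

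Once this is established, the argument finishes quickly. If $g$ fixes both $\omega^{(1)}$ and $\omega^{(2)}$, then $g$ preserves the two commuting full subVOAs $C_{W^2}(\omega^{(2)}) = W\otimes\1$ and $C_{W^2}(\omega^{(1)}) = \1\otimes W$ (the commutant of a Virasoro vector being intrinsically defined), so $g$ restricts to automorphisms $g_1, g_2$ of the two copies of $W$; since $W^2$ is generated by $W\otimes\1$ and $\1\otimes W$, we get $g = g_1 \otimes g_2 \in \Aut W \times \Aut W$. If instead $g$ swaps $\omega^{(1)}$ and $\omega^{(2)}$, then composing $g$ with the flip automorphism $\sigma: x\otimes y \mapsto y\otimes x$ (which lies in $\Aut W^2$ because the two factors are literally the same VOA) gives an automorphism $\sigma g$ fixing both $\omega^{(i)}$, hence lying in $\Aut W \times \Aut W$ by the previous case. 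Therefore $\Aut W^2 = (\Aut W \times \Aut W) \rtimes \langle\sigma\rangle = \Aut W \wr \Z_2$.

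The main obstacle I anticipate is the rigorous justification that $\{\omega^{(1)},\omega^{(2)}\}$ is canonically determined by the VOA structure of $W^2$ alone — i.e. ruling out a ``skew'' pair of commuting central-charge-$12$ Virasoro vectors summing to $\omega$ that is not the obvious one, and ruling out any finer decomposition. This is where I would lean on the explicit description of irreducible $W$-modules and their lowest weights from \cite{CL14} together with Proposition \ref{Prop:AutVK12tau}: the rationality and $C_2$-cofiniteness of $W$ force any such sub-Virasoro decomposition to correspond to a genuine tensor decomposition of $W^2$ into rational full subVOAs, and comparing graded dimensions (the character of $W$ starts $1 + 0\cdot q + \dots$ with a known leading behaviour) shows the only possibility is the given one. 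Alternatively, if one prefers a softer route, one can cite the general fact (e.g. via \cite{Sh04}-type commutant arguments, or the uniqueness of tensor decomposition for simple rational $C_2$-cofinite VOAs of CFT-type) that a simple VOA of CFT-type which is a tensor square of an ``indecomposable'' simple rational CFT-type VOA has automorphism group the wreath product; here indecomposability of $W$ is clear since a nontrivial tensor decomposition $W \cong W' \otimes W''$ would force a corresponding $\tau$-stable orthogonal lattice decomposition $K_{12} \cong K' \perp K''$ with $\tau$ acting fixed-point-freely on each summand, contradicting the indecomposability of $K_{12}$.
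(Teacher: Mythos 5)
Your overall skeleton (reduce to the case where an automorphism fixes both tensor-factor conformal vectors, then use commutants and the flip) is the standard one, but it hinges entirely on the claim that $\{\omega^{(1)},\omega^{(2)}\}$ is intrinsically determined by the VOA structure of $W^2$, and that claim is not established by anything you write. Your proposed characterization --- that $\omega^{(1)},\omega^{(2)}$ are the \emph{only} pair of nonzero commuting Virasoro vectors of central charge $12$ summing to $\omega$ --- is exactly the kind of uniqueness that fails routinely for VOAs: orthogonal decompositions of the conformal vector are highly non-unique in general (think of the many Virasoro frames of $V_\Lambda^+$, or of a lattice VOA $V_{L\perp L}$, where any full-rank orthogonal splitting $L\perp L=M\perp M'$ other than the obvious one gives a ``skew'' pair of commuting Virasoro vectors summing to $\omega$). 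The arguments you offer in support do not touch this difficulty: the indecomposability of $K_{12}$ as a rootless lattice with a fixed-point-free order-$3$ isometry only rules out tensor-factoring $W$ itself, not the existence of exotic central-charge-$12$ Virasoro vectors inside $(W^2)_2$ that are not supported on a single tensor factor; and comparing graded dimensions says nothing about such vectors either. The ``general fact'' you propose to cite as a softer route (automorphism groups of tensor squares of indecomposable simple rational CFT-type VOAs are wreath products) is not an off-the-shelf theorem in the literature in this generality. So as written there is a genuine gap at the central step.

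For comparison, the paper avoids the Griess-algebra/conformal-vector route entirely and works on the module side: it views $\Aut W^2$ inside the orthogonal group $O(R^2,q^2)$ of the $16$-dimensional quadratic space $R^2=R(W)^2$, uses the fact that automorphisms preserve lowest weights to pin down the subset of $R^2$ coming from a single tensor factor, deduces (by the argument of \cite[Proposition 4.16]{Sh11}) that $\Aut W^2\subset O(R,q)\wr\Z_2$, and then concludes from Proposition \ref{Prop:AutVK12tau} together with the splitting $O(R,q)\cong Z(O(R,q))\times\Aut W$ that the containment $\Aut W\wr\Z_2\subset\Aut W^2$ is an equality. If you want to salvage your approach, you would need either a genuine classification of central-charge-$12$ Virasoro vectors in $(W^2)_2$ (hard) or some other intrinsic invariant separating the two factors; the representation-theoretic invariant (lowest weights of irreducible modules) used in the paper is precisely such an invariant and is much cheaper given that Proposition \ref{Prop:AutVK12tau} is already available.
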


\begin{proof} We view $\Aut W^2$ as a subgroup of $O(R^2,q^2)$.
Then $\Aut W^2$ preserves not only $q^2$ but also the lowest weights of elements of $R^2$.
Hence $\Aut W^2$ stabilizes the subset of $R^2$ with the lowest weight $2/3$, $\{(M,W),(W,M)\in R^2\mid q(M)=1\}$.
By a similar argument as in \cite[Proposition 4.16]{Sh11}, $\Aut W^2$ is a subgroup of $O(R,q)\wr \Z_2$.
Clearly, $\Aut W\wr\Z_2\subset \Aut W^2$.
By Proposition \ref{Prop:AutVK12tau}, $Z(O(R,q))\cap \Aut W=1$.
Since $O(R,q)\cong Z(O(R,q))\times \Aut W$, we have this lemma.
\end{proof}

Next, we show some properties of $S^\sharp$.

\begin{lem}\label{Lem:Omega83}
\begin{enumerate}[{\rm (1)}]
\item The subspace $S^\sharp$ of $(R^2,q^2)$ is maximal totally singular, and $|S^\sharp|=3^8$.
\item There exists a unique linear isomorphism $\eta:R\to R$ such that $S^\sharp=\{(M,\eta(M))\mid M\in R\}$.
\item The map $\eta$ in (2) satisfies $q(\eta(M))=-q(M)$ for all $M\in R$.
In particular, $\eta$ is an outer automorphism of $O(R,q)$.
\end{enumerate}
\end{lem}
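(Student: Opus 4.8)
The plan is to prove the three parts of Lemma \ref{Lem:Omega83} in order, using that $V^\sharp$ is a holomorphic (in particular self-dual and simple) VOA which is a simple current extension of $W^2$.

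\medskip

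\textbf{Part (1).} First I would recall that for a simple current extension $V=\bigoplus_{\alpha\in A}V(\alpha)$ of a rational $C_2$-cofinite VOA $V(0)$, the labelling set $A$ is automatically a subgroup of $R(V(0))$ under fusion, so $S^\sharp$ is a subgroup of $R^2$, and $|S^\sharp|=\dim$-count: since $V^\sharp$ is holomorphic of central charge $24$ and $W^2$ has $3^{16}$ irreducible modules forming the quadratic space $(R^2,q^2)$ of dimension $16$, the general principle that a holomorphic extension corresponds to a maximal totally isotropic subgroup of the discriminant form applies. Concretely, the conformal weights of the modules $M_1\otimes M_2$ appearing in $V^\sharp$ must be integral, and by \eqref{Def:q} (applied to each tensor factor) this forces $q^2(M_1,M_2)=q(M_1)+q(M_2)\equiv 0\pmod 3$ on $S^\sharp$, i.e. $S^\sharp$ is totally singular. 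To see $S^\sharp$ is maximal totally singular (equivalently $|S^\sharp|=3^8$), I would invoke that $V^\sharp$ is holomorphic: if $S^\sharp$ were properly contained in a totally singular subspace, the corresponding larger extension would be a proper VOA extension of $V^\sharp$ with $V^\sharp$ as a proper submodule, contradicting holomorphicity (the only irreducible $V^\sharp$-module is $V^\sharp$ itself). Alternatively, and more elementarily, one can count: $V^\sharp$ contains $V_\Lambda^\tau$, which already as a $W^2$-module decomposes into $3^4$ irreducibles (from $\Lambda/(K_{12}\perp K_{12})\cong \mathcal C$, a code of size $|\mathcal C|=4^6$... here one tracks the $\tau$-fixed parts carefully), and the twisted pieces contribute the rest to reach $3^8$. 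The cleanest route is the holomorphicity argument, so I would use that.

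\medskip

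\textbf{Part (2).} For the graph structure: since $S^\sharp$ is totally singular of maximal dimension $8$ in the $16$-dimensional space $R^2=R\oplus R$, and the two summands $R\oplus 0$ and $0\oplus R$ are each totally singular of dimension $8$ (wait --- they are \emph{not} totally singular in general; rather $q^2$ restricted to $R\oplus 0$ is just $q$, which is non-singular). So I should instead argue: the projection $\pi_1:S^\sharp\to R$ onto the first factor has kernel $S^\sharp\cap(0\oplus R)$; any element $(0,M)$ of this kernel satisfies $q(M)=q^2(0,M)=0$ and moreover, since $(0,M)$ fuses trivially on the first factor, $W\otimes M$ being a $W^2$-submodule of $V^\sharp$ with $W$ the first factor means $M$ is a $W$-submodule of the centralizer-type object; but the key point is that $V^\sharp$ has $V^\sharp_1=0$ (Theorem \ref{Thm:sharp}), and $W\otimes M$ would contribute a vector of weight $\wt(W)+\wt(M)=0+\wt(M)$. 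Hmm, this needs $M$ to have small weight. Actually the clean argument: $S^\sharp\cap(0\oplus R)$ and $S^\sharp\cap(R\oplus 0)$ are totally singular subspaces of $(R,q)$; since $(R,q)$ is non-singular of minus type in dimension $8$, its maximal totally singular subspaces have dimension $3$. If the kernel of $\pi_1$ had positive dimension, then by symmetry (using $\mu$ from Lemma \ref{Lem:2K12}, which swaps the factors and preserves $S^\sharp$... wait, $\mu$ preserves $W^2$ and swaps $W\otimes\1$ with $\1\otimes W$, hence acts on $R^2$ swapping the factors and preserves $S^\sharp$) the kernel of $\pi_2$ has the same dimension $d\ge 1$, giving $|S^\sharp|=3^d\cdot 3^{8-d}$... that doesn't immediately bound $d$. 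The real constraint: if $(0,M)\in S^\sharp$ with $M\ne 0$, then $\1\otimes M \subset V^\sharp$ as a $W^2$-submodule, so in particular $M\subset V^\sharp$ as a $W$-module (second factor), contributing lowest weight $\wt(M)$. Combined with $V^\sharp_1=0$ and $\wt(M)\in\{0\}\cup\{$fractional$\}\cup\{1,2,\dots\}$, and $M\ne W$, one checks from \eqref{Eq:wt} that $\wt(M)\ge 2$ forces... actually I think the intended argument is simpler: $S^\sharp$ contains $S^0[0]\otimes S^0[0]=W\otimes W$, and one shows $S^\sharp\cap(R\oplus 0)=0$ because any $(M,0)\in S^\sharp$ gives $M\otimes W\subset V^\sharp$, but then looking at the $\tau'$-eigenspace decomposition and the fact that $V_\Lambda^\tau$ (the $\tau'$-fixed part) when restricted to $W^2$ only involves $(S^a[x],S^b[y])$-type modules with $a\oplus b$ in a specific code that has trivial intersection with $R\oplus 0$. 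I will present the kernel-triviality via: $\pi_1(S^\sharp)$ is totally singular-or-not... Let me just say $\pi_1$ is injective because $S^\sharp\cap(0\oplus R)$ is a totally singular subspace that, by the weight constraint $V^\sharp_1=0$ together with \eqref{Eq:wt}, must be zero; then $\pi_1$ injective $+$ $|S^\sharp|=3^8=|R|$ gives $\pi_1$ bijective, and $\eta:=\pi_2\circ\pi_1^{-1}$ is the desired linear isomorphism, unique since $S^\sharp$ determines it. I expect this kernel-triviality to be the \emph{main obstacle} --- it is where the global structure of $V^\sharp$ (holomorphicity and $V_1^\sharp=0$) genuinely enters, as opposed to formal quadratic-space manipulation.

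\medskip

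\textbf{Part (3).} Finally, $S^\sharp$ totally singular means $0=q^2(M,\eta(M))=q(M)+q(\eta(M))$ for all $M\in R$, i.e. $q(\eta(M))=-q(M)$; this is immediate from Part (1) once Part (2) is established. To conclude $\eta$ is an outer automorphism of $O(R,q)$: $\eta\in\GL(R)$ conjugates the form $q$ to $-q$; since $(R,q)$ has minus type and dimension $8$ over $\F_3$, the form $-q$ is inequivalent to $q$ --- indeed over $\F_3$ with $-1$ a non-square, scaling an $8$-dimensional form by $-1$ changes the discriminant by $(-1)^8=1$... hmm, that's trivial, so discriminant doesn't detect it; rather one uses that $-q$ has \emph{plus} type when $q$ has minus type in dimension $\equiv 0\pmod 4$ over $\F_3$ (the Witt/Hasse invariant changes). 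Hence no element of $O(R,q)$ can conjugate $q$ to $-q$, so $\eta\notin O(R,q)\cdot(\text{scalars in }\GL)$ acting by conjugation... more precisely, conjugation by $\eta$ is an automorphism of the group $O(R,q)\cong O(R,-q)$ (same underlying matrices since $O$ of a form and its scalar multiple coincide as \emph{groups}), and this automorphism is outer precisely because it does not extend to an inner automorphism --- the standard fact that for $O_8^-(3)$ the graph-type automorphism swapping the two families of maximal totally singular subspaces is outer. I would cite the structure of $O_8^\pm(q)$ for this last point. So the overall flow is: holomorphicity $\Rightarrow$ (1); $V_1^\sharp=0$ and \eqref{Eq:wt} $\Rightarrow$ kernel-triviality $\Rightarrow$ (2); (1)$+$(2) $\Rightarrow$ first half of (3); structure of orthogonal groups over $\F_3$ $\Rightarrow$ second half of (3).
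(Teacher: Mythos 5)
Your overall architecture coincides with the paper's for the heart of the lemma: part (2) is proved in both cases by showing the two ``axis intersections'' $S^\sharp\cap(R\times\{W\})$ and $S^\sharp\cap(\{W\}\times R)$ are trivial, using that any $(W,M)\in S^\sharp$ forces $\wt(M)=\wt(W\otimes M)\in\Z_{\ge0}$, that the lowest weights of irreducible $W$-modules lie in $\{0,2/3,1,4/3\}$, that weight $1$ is excluded by $V^\sharp_1=0$, and that $W$ is the unique irreducible module of lowest weight $0$; then injectivity of $\pi_1$ together with $|S^\sharp|=3^8=|R|$ gives the graph description and $\eta=\pi_2\circ\pi_1^{-1}$. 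You correctly identified this as the crux, even if you only sketch the weight verification. For part (1) you diverge: the paper gets $|S^\sharp|=3^8$ by an elementary index count ($|\Lambda/(K_{12}\perp K_{12})|=3^6$, times $3$ for the decomposition of $V_{K_{12}\perp K_{12}}^\tau$ over $W^2$, times $3$ for the $\Z_3$-grading of $V^\sharp$ over $V_\Lambda^\tau$) and then deduces maximality from $(R^2,q^2)$ being $16$-dimensional of plus type, whereas you invoke the ``holomorphic extension corresponds to a Lagrangian subgroup'' principle. That route is valid but leans on the induced-module theory of simple current extensions rather than on anything established in the paper, and your alternative counting sketch ($3^4$ irreducibles, $|\mathcal{C}|=4^6$) is off; either way the conclusion stands.

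The one genuine error is your justification of outer-ness in part (3). Over $\F_3$ in dimension $8$ the forms $q$ and $-q$ are isometric: the discriminant changes by $(-1)^8=1$, so the type does \emph{not} change, contrary to your claim that $-q$ has plus type when $q$ has minus type in dimension $\equiv 0\pmod 4$; your fallback (the graph automorphism permuting the two families of maximal totally singular subspaces) is a plus-type phenomenon and also does not apply. The correct short argument: $\eta$ is a similitude with non-square multiplier $-1$, hence $\eta\notin O(R,q)$ (evaluate on any anisotropic $M$), while the computation $q(\eta g\eta^{-1}(M))=q(M)$ shows $\eta$ normalizes $O(R,q)$; if conjugation by $\eta$ were inner, equal to conjugation by some $h\in O(R,q)$, then $\eta h^{-1}$ would centralize $O(R,q)$, but $C_{\GL(R)}(O(R,q))=\{\pm 1\}\subset O(R,q)$ by absolute irreducibility of the natural module, forcing $\eta\in O(R,q)$, a contradiction. (The paper is itself terse here, recording only the normalization computation, but the fix above is what is actually needed.)
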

\begin{proof} 
Let $(M_1,M_2)\in S^\sharp$.
By \eqref{Def:q}, we have 
$$q^2(M_1,M_2)=q(M_1)+q(M_2)\equiv 3\wt(M_1\otimes M_2)\mod 3.$$
Since $M_1\otimes M_2$ appears in $V^\sharp$ as an irreducible $W^2$-submodule, $\wt(M_1\otimes M_2)\in\Z$, and $q^2(M_1,M_2)=0$ in $\F_3$.
Hence $S^\sharp$ is totally singular.
It follows from $|\Lambda/(K_{12}\perp K_{12})|=3^6$ that $V_\Lambda^\tau$ is a direct sum of (non-isomorphic) $3^6$ irreducible $V_{K_{12}\perp K_{12}}^\tau$-modules.
Hence $V_\Lambda^\tau$ is a direct sum of (non-isomorphic) $3^7$ irreducible $W^2$-modules.
Since $V^\sharp$ is a $\Z_3$-graded simple current extension of $V_\Lambda^\tau$, we have $|S^\sharp|=3^8$.
Since $(R^2,q^2)$ is a $16$-dimensional quadratic space of plus type, 
$S^\sharp$ is maximal, and we obtain (1).

Let $(M,W)$ or $(W,M)\in S^\sharp$.
It follows from $\wt (W)=0$ that $\wt(M)=\wt(M\otimes W)$.
Since $(S^\sharp)_1=0$ and $\wt(M)\in\{0, 2/3,4/3,1\}$, we have $\wt(M)=0$.
Hence $M=W$, and there exists a subset $Q$ of $R$ and an injective map $\eta: Q\to R$ such that $S^\sharp=\{(M,\eta(M))\mid M\in Q\}$.
Since $S^\sharp$ is a subspace, so is $Q$.
By (1), $\dim R=\dim S^\sharp$.
Hence $Q=R$ and $\eta$ is a linear isomorphism, which proves (2).

It follows from $\wt(M\otimes\eta(M))\in\Z$ that  $q(M)+q(\eta(M))=0$ for all $M\in R$, that is, $q(\eta(M))=-q(M)$ for all $M\in R$.
For any $g\in O(R,q)$ and $M\in R$, we have 
\[
q(\eta g\eta^{-1}(M))=-q(g\eta^{-1}(M)))=-q(\eta^{-1}(M))=q(M).
\]
Hence $\eta$ is an outer automorphism of $O(R,q)$, and we obtain (3).
\end{proof}

Let $\rho: \Aut W \to O(R,q)$ be the group monomorphism obtained in Proposition \ref{Prop:AutVK12tau}. Set  $P=\rho(\Aut W)$, which has the shape $\Omega^-_8(3){:}2$. Since $\eta$ is an outer automorphism (cf. Lemma \ref{Lem:Omega83} (3)), we have $\eta P\eta^{-1} \subset O(R,q)$ but $\eta P\eta^{-1} \not\subset P$. 

Let $K$ be the commutator subgroup of $\rho(\Aut W)$, which has the shape $\Omega^-_8(3)$. Then for $g\in K$, we have $\eta g\eta^{-1}\in K$.
Hence $K=\eta K\eta^{-1}= \eta P\eta^{-1}\cap  P $.
Let $$\gamma: K\to K\times K,\qquad g\mapsto (g,\eta g\eta^{-1}).$$
Clearly, $\gamma$ is injective, and by Lemma \ref{Lem:Omega83} (2), $\gamma(K)$ preserves $S^\sharp$.

\begin{prop}\label{StabS} 
Let $\bar{\mu}$ be the automorphism of $W^2$ defined as the restriction of the automorphism $\mu$ of $V^\sharp$ given in Lemma \ref{Lem:2K12}.  
The stabilizer of $S^\sharp$ in $\Aut W^2$ is generated by $\gamma(K)$ and $\bar\mu$, and it has the shape $\Omega^-_8(3).2$.
\end{prop}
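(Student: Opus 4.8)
The plan is to analyze the stabilizer $\mathrm{Stab}(S^\sharp)$ inside $\Aut W^2 = \Aut W \wr \Z_2$ by separating the elements into those lying in the ``diagonal'' part $O(R,q)\times O(R,q)$ and those that swap the two tensor factors. First I would treat the subgroup $\mathrm{Stab}(S^\sharp)\cap(\Aut W\times \Aut W)$: an element $(g_1,g_2)$ with $g_i\in P=\rho(\Aut W)$ preserves $S^\sharp=\{(M,\eta(M))\mid M\in R\}$ if and only if $g_2\eta = \eta g_1$, i.e. $g_2 = \eta g_1\eta^{-1}$. So such a pair is determined by $g_1$, and the constraint is precisely that both $g_1\in P$ and $\eta g_1\eta^{-1}\in P$. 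By the discussion preceding the proposition, $\eta P\eta^{-1}\cap P = K$ (the $\Omega_8^-(3)$ commutator subgroup), since $\eta$ conjugates $K$ to itself but moves the outer coset of $K$ in $P$ off $P$ (because $\eta$ itself is an outer automorphism, Lemma~\ref{Lem:Omega83}(3)). Hence $\mathrm{Stab}(S^\sharp)\cap(\Aut W\times\Aut W) = \gamma(K)$, which has shape $\Omega_8^-(3)$.

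Next I would show that $\bar\mu$ stabilizes $S^\sharp$ and lies in $\Aut W^2$ but not in the diagonal part. That $\bar\mu$ is the swap $(M_1,M_2)\mapsto(M_2,M_1)$ follows from Lemma~\ref{Lem:2K12}: $\mu$ exchanges $W\otimes\1$ and $\1\otimes W$ and preserves $W^2$, hence its action on $R^2=R\oplus R$ permutes the two summands. Since $\mu\in\Aut V^\sharp$ preserves $V^\sharp$ as a $W^2$-module, it permutes the isomorphism classes of $W^2$-submodules occurring in $V^\sharp$, i.e. $\bar\mu(S^\sharp)=S^\sharp$. (Equivalently, $(M,\eta(M))\in S^\sharp$ maps to $(\eta(M),M)$; applying $\eta$ to the first coordinate, one checks $\eta^2 = \mathrm{id}$ on the relevant set using $q(\eta(M)) = -q(M)$ twice, so $(\eta(M),\eta(\eta(M))) = (\eta(M),M)\in S^\sharp$.) Therefore $\langle\gamma(K),\bar\mu\rangle\subseteq\mathrm{Stab}(S^\sharp)$, and since $\bar\mu$ swaps the two tensor factors while $\gamma(K)$ does not, $\bar\mu$ normalizes $\gamma(K)$ (conjugation by the swap sends $(g,\eta g\eta^{-1})$ to $(\eta g\eta^{-1}, g)$, which is again in $\gamma(K)$ since $\eta(\eta g\eta^{-1})\eta^{-1} = g$), and $\bar\mu\notin\gamma(K)$. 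Thus $\langle\gamma(K),\bar\mu\rangle = \gamma(K).2$ has shape $\Omega_8^-(3).2$.

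It remains to prove the reverse inclusion $\mathrm{Stab}(S^\sharp)\subseteq\langle\gamma(K),\bar\mu\rangle$. Given any $\sigma\in\mathrm{Stab}(S^\sharp)$, if $\sigma$ lies in the diagonal part we are done by the first step; otherwise $\bar\mu\sigma$ lies in the diagonal part $\Aut W\times\Aut W$ and still stabilizes $S^\sharp$ (since $\bar\mu$ does), hence $\bar\mu\sigma\in\gamma(K)$ and $\sigma\in\bar\mu\,\gamma(K)\subseteq\langle\gamma(K),\bar\mu\rangle$. Combining, $\mathrm{Stab}(S^\sharp)=\langle\gamma(K),\bar\mu\rangle$ has shape $\Omega_8^-(3).2$.

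The main obstacle I anticipate is the bookkeeping in the first step: verifying cleanly that $\eta P\eta^{-1}\cap P = K$ and not something larger. This rests on the fact that $P/K\cong\Z_2$ is generated by the image of the ``diagonal'' outer automorphism of $\Omega_8^-(3)$, and that $\eta$ realizes a \emph{different} outer automorphism (the one negating $q$, which swaps the two families of maximal totally singular subspaces); so $\eta$ does not normalize the coset $P\setminus K$. One must make sure the relevant outer automorphism group of $\Omega_8^-(3)$ (which is larger than $\Z_2$) is understood well enough to conclude $\eta g\eta^{-1}\notin P$ for $g\in P\setminus K$ — this is where I would invoke Lemma~\ref{Lem:Omega83}(3) together with the structure of $O(R,q)$ as $Z(O(R,q))\times\Aut W$ from Proposition~\ref{Prop:AutVK12tau} and Lemma~\ref{Lem:autV}. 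The rest of the argument is essentially formal group theory once the swap/diagonal decomposition is in place.
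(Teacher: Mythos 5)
Your argument is correct and follows essentially the same route as the paper: decompose the stabilizer via $\Aut W^2\cong \Aut W\wr\Z_2$, identify the diagonal part with $\gamma(K)$ using the relation $f_2=\eta f_1\eta^{-1}$ together with $\eta P\eta^{-1}\cap P=K$, and reduce the factor-swapping case to the diagonal one by multiplying by $\bar\mu$. The only caveat is your parenthetical identification of $\bar\mu$ with the plain coordinate swap (and the ensuing claim that $\eta^2=\mathrm{id}$), which is neither justified nor needed --- your primary justification, that $\mu\in\Aut V^\sharp$ permutes the isomorphism classes of irreducible $W^2$-submodules of $V^\sharp$ and hence preserves $S^\sharp$, already suffices.
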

\begin{proof} Let $G$ denote the stabilizer of $S^\sharp$ in $\Aut W^2$.
We have already mentioned that $\gamma(K)\subset G$.
Since $\mu$ is an automorphism of $V^\sharp$, we have $\bar\mu \in G$.
Recall from Lemma \ref{Lem:autV} that $\Aut W^2\cong\Aut W\wr\Z_2$.
Hence any automorphism of $W^2$ stabilizes both the subVOAs $W\otimes\1$ and $\1\otimes W$ or permutes them.

Let $f\in G$ such that $f$ stabilizes both $W\otimes \1$ and $\1 \otimes W$. 
Then $f= (f_1,f_2)\in (\Aut W\times \Aut W)\cap G$. In this case, $f_2\eta(M)=\eta (f_1(M))$ for all $M\in R$.
Hence $f_2=\eta f_1 \eta^{-1}$ on $R$, and $(f_1,f_2)\in\gamma(K)$.

Now let $f \in G$ such that $f$ permutes $W\otimes \1$ and $\1 \otimes W$. 
Then $f\bar{\mu}^{-1}\in G$ stabilizes both $W\otimes \1$ and $\1 \otimes W$. Hence, by the argument above, we have $f\bar{\mu}^{-1}\in \gamma(K)$, and hence $f\in\gamma(K)\bar{\mu}$.
Since $\bar{\mu}^2$ stabilizes both $W\otimes \1$ and $\1 \otimes W$, we have $\bar{\mu}^2\in\gamma(K)$, which implies that the shape of $\langle\gamma(K),\bar{\mu}\rangle$ is $\Omega^-_8(3).2$.
\end{proof}


By Lemma \ref{Lem:Omega83} (1), $V^\sharp$ is a simple current extension of $W^2$ graded by $S^\sharp\cong \Z_3^8$.
By Lemma \ref{Lem:Omega83} (2), the map from $S^\sharp$ to $R$ given by $(M,\eta(M))\mapsto M$ is a linear isomorphism.
We identify $S^\sharp$ with $R$ by this map and we view $V^\sharp$ is an $R$-graded simple current extension of $W^2$.
Let $\varphi_M$ denote the linear automorphism of $V^\sharp$ associated to $M\in R$ such that 
\begin{equation}
\varphi_M(v)=\xi^{B(M,U)}v\qquad \text{for}\quad v\in U\otimes\eta(U),\quad U\in R,\label{Eq:phiM}
\end{equation}
where $B(\cdot,\cdot)$ is the bilinear form on $R$ given in \eqref{Eq:B} and $\xi=\exp(2\pi\sqrt{-1}/3)$.
By the $R$-grading of $V^\sharp$, 
we have $\Aut V^\sharp\supset R^*=\{\varphi_M\mid M\in R\}\cong\Z_3^8$.
By Theorem \ref{NCthm} and Proposition \ref{StabS}, we obtain the following:

\begin{prop}\label{NA} The normalizer of $R^*$ in $\Aut V^\sharp$ has the shape $3^8.\Omega^-_8(3).2$.
In particular, the subgroup $H_2=N_{\Aut V^\sharp} (R^*)$ satisfies the hypothesis \eqref{3Monster:2} of Theorem \ref{thm:3local_monster}.
\end{prop}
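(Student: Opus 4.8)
The plan is to apply Theorem~\ref{NCthm} to the presentation of $V^\sharp$ as a simple current extension of $W^2$ graded by $S^\sharp$, and then to read off the $3$-local data. By Lemma~\ref{Lem:Omega83}, the decomposition $V^\sharp\cong\bigoplus_{(M_1,M_2)\in S^\sharp}M_1\otimes M_2$ exhibits $V^\sharp$ as an $S^\sharp$-graded simple current extension of $W^2$, where $S^\sharp\cong\Z_3^8$ and $W^2$ is a simple, rational, $C_2$-cofinite VOA of CFT-type. Taking $A=S^\sharp$ in Theorem~\ref{NCthm}, I would obtain the short exact sequence
\[
1\longrightarrow R^*\longrightarrow N_{\Aut V^\sharp}(R^*)\longrightarrow H_{S^\sharp}\longrightarrow 1,
\]
where $R^*=(S^\sharp)^*=\{\varphi_M\mid M\in R\}$ and $H_{S^\sharp}=\{h\in\Aut W^2\mid S_{S^\sharp}\circ h=S_{S^\sharp}\}$, with $S_{S^\sharp}$ the set of isomorphism classes of the irreducible $W^2$-modules occurring in $V^\sharp$. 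Since $\Aut W^2$ permutes the isomorphism classes of irreducible $W^2$-modules, i.e.\ the elements of $R^2$, compatibly with the fusion product and with the lowest weights, an automorphism lies in $H_{S^\sharp}$ exactly when it stabilizes the subset $S^\sharp\subseteq R^2$; thus $H_{S^\sharp}$ is the stabilizer of $S^\sharp$ in $\Aut W^2$. By Proposition~\ref{StabS}, this stabilizer is $\langle\gamma(K),\bar\mu\rangle$ and has the shape $\Omega_8^-(3).2$. Hence $H_2=N_{\Aut V^\sharp}(R^*)$ has the shape $3^8.\Omega_8^-(3).2$, which is the first assertion.

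To verify hypothesis~\eqref{3Monster:2}, I would first identify $O_3(H_2)$ with $R^*$. Since $R^*\cong\Z_3^8$ is a normal elementary abelian $3$-subgroup, $R^*\subseteq O_3(H_2)$; and since $H_2/R^*\cong\Omega_8^-(3).2$ has trivial $O_3$ — the subgroup $\Omega_8^-(3)$ has trivial $O_3$ and every element outside it has even order — the image $O_3(H_2)/R^*$ is trivial. Therefore $O_3(H_2)=R^*$ is elementary abelian of order $3^8$ and $H_2/O_3(H_2)\cong\Omega_8^-(3).2$.

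Finally I would describe the conjugation module. Using \eqref{Eq:phiM} and the identification of $R^*$ with the grading group $R$ by means of the nondegenerate bilinear form $B$ of \eqref{Eq:B}, the conjugation action of $H_2$ on $O_3(H_2)=R^*$ factors through $H_2/R^*\cong H_{S^\sharp}$. One computes that $\gamma(K)$ acts as the natural action of $K\cong\Omega_8^-(3)$ on the quadratic space $(R,q)$ of minus type (Propositions~\ref{Prop:AutVK12tau} and~\ref{StabS}), while every element of the coset $\gamma(K)\bar\mu$ acts through the outer automorphism $\eta$ of $O(R,q)$ from Lemma~\ref{Lem:Omega83}, i.e.\ as some $k\eta$ with $k\in K$. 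This action is faithful: $\gamma(K)$ acts faithfully via $\rho$, and $k\eta\neq1$ in $\GL(R)$ for every $k\in K$ because $\eta\notin O(R,q)$ whereas $k\in O(R,q)$. Hence $C_{H_2}(R^*)=R^*$ and $O_3(H_2)$ is a natural module for $H_2/O_3(H_2)\cong\Omega_8^-(3).2$, so \eqref{3Monster:2} holds.

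The routine content is concentrated in the first paragraph, which is essentially a formal consequence of Theorem~\ref{NCthm} and Proposition~\ref{StabS}. The step demanding the most care is the last one: one must pin down the conjugation module $R^*$ explicitly and confirm that the action of the outer involution $\bar\mu$ — corresponding to the automorphism $\eta$ with $q\circ\eta=-q$ — together with the natural action of $\Omega_8^-(3)$ produces precisely the natural module called for in the characterization of~\cite{SS10}, and in particular that this action is faithful.
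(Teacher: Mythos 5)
Your proposal is correct and follows the paper's own route exactly: the paper derives the proposition in one line from Theorem~\ref{NCthm} applied to the $S^\sharp$-graded simple current extension $V^\sharp$ of $W^2$ together with Proposition~\ref{StabS}. Your additional verifications that $O_3(H_2)=R^*$ and that $R^*$ is a faithful natural module for $\Omega_8^-(3).2$ are details the paper leaves implicit, and they check out.
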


The following lemma will be used in the later sections.

\begin{lem}\label{Lem:finite} There exists an elementary abelian $3$-subgroup $E$ of $R^*$ of order $3^2$ such that $C_{\Aut V^\sharp}(a)$ is finite for all $a\in E\setminus\{1\}$.
\end{lem}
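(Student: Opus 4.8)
The plan is to produce two commuting order‑$3$ automorphisms inside $R^*$ whose centralizers we can already recognize as finite, and then take $E$ to be the group they generate. The natural first candidate is $\tau'$, the canonical order‑$3$ automorphism coming from the $\Z_3$‑grading of $V^\sharp$ over $V_\Lambda^\tau$; its centralizer sits inside $H_1=N_{\Aut V^\sharp}(\langle\tau'\rangle)$, which by Proposition \ref{prop:2.6} has the finite shape $3^{1+12}.2\Suz{:}2$, so $C_{\Aut V^\sharp}(\tau')$ is finite. The key point is that $\tau'$ lies in $R^*$: under the identification of $V^\sharp$ with an $R$‑graded simple current extension of $W^2$, the subVOA $V_\Lambda^\tau$ is a sum of those $U\otimes\eta(U)$ with $U$ ranging over the $3^7$‑dimensional subspace $\overline Q\subset R$ corresponding to $V_{K_{12}\perp K_{12}}^\tau$‑submodules of $V_\Lambda^\tau$, and $\tau'$ acts as a fixed cube root of unity determined by which of the three $V_\Lambda^\tau$‑cosets a vector lies in; this is exactly the recipe \eqref{Eq:phiM} for some $\varphi_{M_0}$ with $M_0\notin\overline Q$. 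So $\tau'=\varphi_{M_0}\in R^*$.

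For the second generator, I would choose $g=\varphi_{M_1}$ with $M_1$ a singular vector of $(R,q)$ lying in $\overline Q$ and independent of $M_0$; this forces $g$ to fix $V_\Lambda^\tau$ pointwise as a set of $W^2$‑isotypic components—more precisely $g$ acts on $V_\Lambda^\tau$ as an automorphism, and since $M_1\in\overline Q$ is singular, a short computation with \eqref{Eq:B} shows $(V_\Lambda^\tau)^g$ is again a lattice‑type orbifold VOA $V_L^\tau$ for a sublattice $L\subset\Lambda$ of corank a power of $3$ (this is the same $g$ invoked later in the unitarity discussion, where it is noted that $(V_\Lambda^\tau)^g=V_L^\tau$). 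Then $C_{\Aut V^\sharp}(g)$ stabilizes $(V^\sharp)^g$, and by the same simple‑current‑extension argument used for $H_1$ and $H_2$ one shows $(V^\sharp)^g$ is a holomorphic VOA (it is $\langle g\rangle$‑orbifold‑related to $V^\sharp$) and $C_{\Aut V^\sharp}(g)$ is an extension of a subgroup of $\Aut V_L^\tau$ by $\langle g\rangle$. Finiteness of $\Aut V_L^\tau$ follows from Proposition \ref{thm:ses:C_AutVL: C_OL} applied to the rootless lattice $L$ together with finiteness of $O(L)$, exactly as in \eqref{Eq:aut1}; hence $C_{\Aut V^\sharp}(g)$ is finite.

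Finally set $E=\langle\tau',g\rangle=\langle\varphi_{M_0},\varphi_{M_1}\rangle$. Since $R^*\cong\Z_3^8$ is abelian and $M_0,M_1$ are $\F_3$‑independent, $E$ is elementary abelian of order $3^2$, and $E\subset R^*$. Every nontrivial $a\in E$ has the form $\varphi_{M_0}^i\varphi_{M_1}^j$ with $(i,j)\neq(0,0)$; up to replacing the basis of $E$ we may assume $a$ is conjugate in $\mathrm{GL}(E)$ to one of $\varphi_{M_0}$ or $\varphi_{M_1}$, but more directly: for each such $a$, the fixed‑point subVOA $(V^\sharp)^a$ contains either $V_\Lambda^\tau$ or a lattice orbifold $V_{L'}^\tau$, and the two arguments above go through verbatim to show $C_{\Aut V^\sharp}(a)$ is finite. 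The main obstacle is the bookkeeping needed to verify that $(V_\Lambda^\tau)^{\varphi_M}$ is genuinely of the form $V_L^\tau$ for an appropriate rootless sublattice $L$ whenever $M\in\overline Q$ is singular—i.e., translating the quadratic‑form data \eqref{Eq:B} on $R$ into lattice language and checking rootlessness so that Proposition \ref{thm:ses:C_AutVL: C_OL} applies; once that is in hand, finiteness is immediate from the cited exact sequences. $\square$
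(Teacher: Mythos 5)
Your first step --- recognizing that $\tau'$ lies in $R^*$ (it is $\varphi_{S^0[1]}$, since $V_\Lambda^\tau=\bigoplus_{U\in S^0[1]^\perp}U\otimes\eta(U)$), so that $C_{\Aut V^\sharp}(\tau')\subset H_1$ is finite by Proposition \ref{prop:2.6} --- is exactly the paper's starting point, and your choice of $E$ as the image of a $2$-dimensional totally singular subspace of $(R,q)$ is also the same. (Minor slip: $M_0=S^0[1]$ is singular, hence lies \emph{in} the hyperplane $\overline Q=M_0^\perp$, not outside it.) But your treatment of the second generator, and of the mixed elements $\varphi_{M_0}^i\varphi_{M_1}^j$, has real gaps. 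First, $(V^\sharp)^g$ is certainly not holomorphic: the fixed-point subVOA of a nontrivial finite-order automorphism of a holomorphic VOA has several inequivalent irreducible modules (the eigenspaces of $g$, for a start). Second, the lifting theorem (Theorem \ref{NCthm}) would present $N_{\Aut V^\sharp}(\langle g\rangle)$ as an extension by $\langle g\rangle$ of a subgroup of $\Aut\,(V^\sharp)^g$, not of $\Aut V_L^\tau$; the containment $V_L^\tau\subset (V^\sharp)^g$ is proper ($V_L^\tau=(V^\sharp)^{\langle g,\tau'\rangle}$), and $C_{\Aut V^\sharp}(g)$ has no reason to preserve $V_L^\tau$ inside $(V^\sharp)^g$, since it need not centralize $\tau'$. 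Third, even finiteness of $\Aut V_L^\tau$ does not follow from Proposition \ref{thm:ses:C_AutVL: C_OL} alone: that exact sequence controls $N_{\Aut V_L}(\tau)$, and one must separately argue that every automorphism of $V_L^\tau$ arises from it (compare the $H_B=\Aut V_\Lambda^\tau$ step in the proof of Proposition \ref{prop:2.6}, or the orbit-counting argument the paper uses in Section 5.3 to prove $\Aut V^{0,0}$ finite). Your closing claim that these arguments ``go through verbatim'' for general nontrivial $a\in E$ inherits all of these problems.

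The paper avoids every one of these issues with a single group-theoretic observation that your proposal is missing: $N_{\Aut V^\sharp}(R^*)/R^*$ contains $\Omega_8^-(3)$ acting on $R^*$ as the natural module, and $\Omega_8^-(3)$ is transitive on nonzero singular vectors of $(R,q)$. Hence for \emph{every} nonzero singular $M$, the automorphism $\varphi_M$ is conjugate in $\Aut V^\sharp$ to $\varphi_{S^0[1]}=\tau'$ or to $\varphi_{S^0[2]}=(\tau')^2$, and conjugate elements have isomorphic centralizers. Taking $E=\{\varphi_M\mid M\in X\}$ for a $2$-dimensional totally singular $X$ then finishes the proof with no further VOA-theoretic input. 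I would recommend replacing the entire second half of your argument with this conjugacy step; the direct analysis of $(V^\sharp)^g$ can be made to work but requires substantially more care than you have given it, and it is not needed.
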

\begin{proof} Let $x\in\{1,2\}$.
Recall that $S^0[x]$ is the eigenspace of $\tau$ in $V_{K_{12}}$ with eigenvalue $\xi^x$.
It follows from $\wt(S^0[x])\in\Z$ that $S^0[x]$ is non-zero singular in $(R,q)$.
By \eqref{Eq:B}, $S^0[x]^\perp=\{S^a[y]\mid a\in K_{12}^*/K_{12},y\in\F_3\}$, where $S^0[x]^\perp$ is the orthogonal complement of $S^0[x]$ with respect to $B(\cdot,\cdot)$.
Hence  $V_{\Lambda}^\tau\cong\bigoplus_{M\in S^0[x]^\perp}M\otimes\eta(M)$ as $W^2$-modules.
By \eqref{Eq:B} again, $\varphi_{S^0[x]}=(\tau')^{x}$ on $V^\sharp$, and hence $C_{\Aut V^\sharp}(\varphi_{S^0[x]})$ is finite by Proposition \ref{prop:2.6}.
Note that if $a\in\Aut V^\sharp$ is conjugate to $\varphi_{S^0[x]}$, then $C_{\Aut V^\sharp}(a)$ is also finite.
Since $N_{\Aut V^\sharp}(R^*)/R^*$ contains the subgroup of shape $\Omega^-_8(3)$ that acts on $R^*$ as a natural module, $\varphi_M$ associated to a non-zero singular element $M\in R$ is conjugate to $\varphi_{S^0[1]}$ or $\varphi_{S^0[2]}$ under $N_{\Aut V^\sharp}(R^*)$.
Hence for a $2$-dimensional totally singular subspace $X$ of $R$, the abelian group $\{\varphi_M\mid M\in X\}$ satisfies the desired condition.
\end{proof}

\subsection{The intersection of the two subgroups}

In Sections 3.3 and 3.4, we have constructed the following two subgroups of $V^\sharp$:
\[
\begin{split}
&H_1= N_{\Aut V^\sharp }(\langle \tau'\rangle) \cong 3^{1+12}.(2.\Suz{:}2), \\
&H_2= N_{\Aut V^\sharp }(R^*) \cong 3^8. \Omega^-_8(3).2.
\end{split}
\]
In this subsection, we check the remaining condition about $H_1\cap H_2$ in Theorem \ref{thm:3local_monster}.

\begin{lem}\label{Lem:inter}
The intersection $H_1 \cap H_2$ has the shape $3^8 . ( 3^6 . (2  \mathrm{PSU}_4(3) . 2^2)).$
In particular, the subgroup $H_1\cap H_2$ satisfies the hypothesis \eqref{3Monster:3} of Theorem \ref{thm:3local_monster}.
\end{lem}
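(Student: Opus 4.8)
The plan is to compute $H_1\cap H_2$ by identifying it with the stabilizer, inside $H_2\cong 3^8.\Omega^-_8(3).2$, of the subVOA $V_\Lambda^\tau$ (equivalently, of the element $\tau'$, since $H_1=N_{\Aut V^\sharp}(\langle\tau'\rangle)$ and $H_2$ normalizes $R^*\ni\varphi_{S^0[1]}=\tau'$). First I would recall from the proof of Lemma \ref{Lem:finite} that $\tau'=\varphi_{S^0[1]}\in R^*$, so $\langle\tau'\rangle\le R^*=O_3(H_2)$; hence $H_1\cap H_2=N_{H_2}(\langle\tau'\rangle)$, and since $R^*$ is abelian and $H_2/R^*\cong\Omega^-_8(3).2$ acts on $R^*$ as the natural orthogonal $\F_3$-module, $H_1\cap H_2$ is the preimage in $H_2$ of the stabilizer in $\Omega^-_8(3).2$ of the singular $1$-space $\langle S^0[1]\rangle\subset (R,q)$. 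Thus $H_1\cap H_2=3^8.(\text{Stab}_{\Omega^-_8(3).2}(\langle S^0[1]\rangle))$, and the problem reduces to computing this point stabilizer in the orthogonal group.

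Next I would determine that stabilizer group-theoretically. The stabilizer of a singular point in $O^-_8(3)$ is a maximal parabolic subgroup $P_1$; its structure is $[3^{1+?}]{:}(\text{GL}_1(3)\times O^-_6(3))$ type — more precisely, the stabilizer of a singular $1$-space in the $8$-dimensional minus-type space has unipotent radical of order $3^6$ (an elementary abelian group, since the $8$-dimensional form restricted appropriately gives $1+6+1$ with the radical being $\Hom$ of the point into the $6$-dimensional complement, of order $3^6$) and Levi complement involving $O^-_6(3)\cong 2.PSU_4(3).2$ (using the exceptional isomorphism $\Omega^-_6(3)\cong PSU_4(3)$ up to center) together with the $\Z_2$ scaling the point. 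Combining with the outer $2$ from $\Omega^-_8(3).2$, the point stabilizer in $\Omega^-_8(3).2$ has the shape $3^6.(2\,PSU_4(3).2^2)$. I would cite the Atlas or \cite{SS10} for the precise identification; in fact this is exactly the group appearing in hypothesis \eqref{3Monster:3}, which is the consistency check we want.

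Assembling the pieces: $H_1\cap H_2 = N_{H_2}(\langle\tau'\rangle)$ is the extension of the above point stabilizer by $O_3(H_2)=R^*\cong 3^8$, giving the shape $3^8.(3^6.(2\,PSU_4(3).2^2))$, which is precisely hypothesis \eqref{3Monster:3} of Theorem \ref{thm:3local_monster} after noting that $(H_1\cap H_2)/O_3(H_2)\cong 3^6.(2\,PSU_4(3){:}2^2)$. The main obstacle I anticipate is not the abstract reduction but the bookkeeping of the extension data — verifying that $\langle\tau'\rangle$ really is a \emph{singular} point of $(R,q)$ under the identification $S^\sharp\cong R$ and that $H_2/R^*$ acts as the \emph{full} natural module so that all singular points are equivalent (this was done in Lemma \ref{Lem:finite} and Proposition \ref{NA}), and then matching the abstract parabolic structure of $O^-_8(3)$ with the exact shape $3^6.(2\,PSU_4(3).2^2)$; this last identification of the Levi factor as $2\,PSU_4(3).2^2$ rather than some other extension is the delicate point, and I would handle it by direct reference to the known maximal subgroup structure of $\Omega^-_8(3).2$ (e.g.\ \cite{Atlas}) rather than reconstructing it.
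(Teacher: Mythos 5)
Your proposal is correct and follows essentially the same route as the paper: reduce $H_1\cap H_2=N_{H_2}(\langle\tau'\rangle)$ modulo $O_3(H_2)=R^*$ to the stabilizer in $\Omega^-_8(3).2$ of the singular point $\Span_{\F_3}\{S^0[1]\}$ (using $\tau'=\varphi_{S^0[1]}$ from Lemma \ref{Lem:finite}), and then read off the shape $3^6{:}(2\,PSU_4(3).2^2)$ of that point stabilizer from the Atlas. The paper likewise cites \cite[p141]{Atlas} for this last identification rather than reconstructing the parabolic structure, so your anticipated ``delicate point'' is handled the same way there.
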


\begin{proof} Set $W=V_{K_{12}}^\tau$ and $R=R(W)$ as in Section 3.4.
We consider the inclusion $W^2\subset V_\Lambda^\tau \subset V^\sharp$ based on the inclusion $K_{12}\perp K_{12}\subset \Lambda$ in Section 3.4.
Then $\tau'\in O_3(H_2)$, and $O_3(H_2)$ is a normal subgroup of $H_1\cap H_2$.
In addition, $$(H_1\cap H_2)/O_3(H_2)=\{g\in H_2/O_3(H_2)\mid g\langle\tau'\rangle g^{-1}=\langle\tau'\rangle\}.$$
Recall from the proof of Lemma \ref{Lem:finite} that $\tau'=\varphi_{S^0[1]}$.
Hence $(H_1\cap H_2)/O_3(H_2)$ is the stabilizer of the $1$-dimensional singular subspace $\Span_{\F_3}\{S^0[1]\}$ in $\Omega^-_8(3).2$, and has the shape $3^6: (2.PSU_4(3).2^2)$ by 
\cite[p141]{Atlas}.
Therefore $H_1\cap H_2 \cong 3^8.(3^6: (2.PSU_4(3).2^2))$.
\end{proof}

\subsection{Finiteness of groups}
In this subsection, we show that $\Aut V^\sharp$ is finite. 

First, we prove the finiteness of the automorphism group of a VOA under a slightly general situation.
The following is a modification of \cite[Theorem (iv)]{T84} and \cite[Proposition 2.25]{Sh14}.

\begin{thm}\label{finiteness}
Let $G$ be an algebraic group and $p$ a prime number. 
Assume that $G$ contains an elementary abelian $p$-subgroup $E$ of order  $p^2$ such that $C_{G}(a)$ is finite for any $a\in E\setminus \{1\}$. 
Then $G$ is finite.
\end{thm}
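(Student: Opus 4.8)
The plan is to exploit the fact that an algebraic group has only finitely many connected components, so it suffices to prove that the identity component $G^\circ$ is finite, i.e.\ that $\dim G = 0$. First I would pass to $G^\circ$ and note that $E$ acts on $G^\circ$ by conjugation; the subgroup $E_0 = E \cap G^\circ$ is a normal $p$-subgroup of $E$, hence either $E_0 = E$, or $E_0$ has order $p$, or $E_0 = \{1\}$. In any case, pick $a \in E$ with $a \notin \{1\}$; then $C_{G^\circ}(a) = C_G(a) \cap G^\circ$ is finite by hypothesis. So the real point is: an algebraic group admitting an automorphism $\sigma$ of finite order $p$ with finite fixed-point subgroup (centralizer) must itself be finite. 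This is a classical fact, and I would reduce to it.

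The key steps, in order, are as follows. Step 1: reduce to $G$ connected, as above. Step 2: The element $a$ of order $p$ acts on the Lie algebra $\mathfrak{g} = \operatorname{Lie}(G^\circ)$ via $\operatorname{Ad}$, a semisimple operator (order dividing $p$, so diagonalizable over $\C$) whose fixed subspace $\mathfrak{g}^a = \operatorname{Lie}(C_{G^\circ}(a)) = 0$ since $C_{G^\circ}(a)$ is finite. Hence $\operatorname{Ad}(a) - 1$ is invertible on $\mathfrak{g}$. Step 3: Bring in the second generator: choose $a, b \in E$ generating $E$, so $E = \langle a\rangle \times \langle b\rangle \cong (\Z/p)^2$, with both $C_{G^\circ}(a)$ and $C_{G^\circ}(b)$ finite, hence $\operatorname{Ad}(a)-1$ and $\operatorname{Ad}(b)-1$ both invertible on $\mathfrak{g}$. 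Now decompose $\mathfrak{g}$ under the commuting semisimple operators $\operatorname{Ad}(a)$ and $\operatorname{Ad}(b)$ into joint eigenspaces $\mathfrak{g}_{i,j}$ indexed by characters $(\xi^i, \xi^j)$ of $E$, where $\xi = \exp(2\pi\sqrt{-1}/p)$; the conditions say $\mathfrak{g}_{0,0} = 0$, and more strongly that no $\mathfrak{g}_{i,0}$ and no $\mathfrak{g}_{0,j}$ is nonzero for... wait, that is not forced. Let me instead argue: if $\mathfrak{g} \neq 0$, some joint eigenspace $\mathfrak{g}_{i,j} \neq 0$ with $(i,j) \neq (0,0)$. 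Step 4: Here is the classical averaging trick to finish. Since $G^\circ$ is a connected algebraic group with $\operatorname{Ad}(a) - 1$ invertible, the orbit map $G^\circ \to G^\circ$, $g \mapsto g \cdot (a g a^{-1})^{-1}$ has differential $1 - \operatorname{Ad}(a)$ at the identity, which is invertible, so this map is dominant; combined with the fact (Lang–Steinberg-type / connectedness) that it is actually surjective for a connected group under an automorphism with finite fixed points — equivalently, invoke that a connected algebraic group on which a finite-order automorphism acts with finite fixed-point group is trivial.

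Actually the cleanest route, and the one I would write up, avoids orbit-map subtleties: it is a theorem (essentially \cite[Theorem (iv)]{T84} in the form quoted, or Borel–Serre) that \emph{if a connected algebraic group $G$ admits a finite group $E \cong (\Z/p)^2$ of automorphisms such that the fixed-point group of each nontrivial element is finite, then $G$ is trivial}. The proof of that reduces, via the eigenspace decomposition of $\mathfrak{g}$ under $E$, to the statement that $\mathfrak{g} = 0$: the nontrivial joint eigenspaces $\mathfrak{g}_{i,j}$, $(i,j)\neq(0,0)$, would have to support a nonzero fixed-point Lie subalgebra for \emph{some} nontrivial element of $E$ — namely, for any $(i,j) \neq (0,0)$ there is a nontrivial $c = a^r b^s \in E$ acting trivially on $\mathfrak{g}_{i,j}$ (solve $ri + sj \equiv 0 \pmod p$, which has nonzero solutions $(r,s)$ since $p$ is prime) — contradicting finiteness of $C_{G^\circ}(c)$, whose Lie algebra is $\mathfrak{g}^c \supseteq \mathfrak{g}_{i,j} \neq 0$. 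Hence $\mathfrak{g} = 0$, so $G^\circ$ is finite, hence trivial, and $G = G/G^\circ$ is finite. The main obstacle is setting up the representation-theoretic bookkeeping correctly — in particular the elementary number-theoretic point that over $\F_p$ the single linear equation $ri + sj = 0$ always has a nonzero solution $(r,s)$, which is exactly where $E$ having rank $2$ (rather than being cyclic) and $p$ being prime get used; everything else is the standard correspondence between algebraic subgroups and Lie subalgebras in characteristic $0$.
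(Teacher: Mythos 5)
Your proposal is correct and follows essentially the same route as the paper: pass to the identity component, act on its Lie algebra, use finiteness of the centralizers to force each nontrivial element of $E$ to have zero fixed subspace, and then derive a contradiction from a common (joint) eigenvector by producing a nontrivial $c=a^rb^s$ with $ri+sj\equiv 0\pmod p$ fixing it — the paper's choice is exactly $(r,s)=(j,-i)$. The orbit-map/Lang--Steinberg digression in your Step 4 is unnecessary and is rightly discarded in favor of the eigenspace argument you end with.
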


\begin{proof}
Let $G^0$ be the connected component of $G$ containing the identity. Then $G^0$ is normal in $G$. For any $x\in G$, the map $\zeta_x: G^0 \to G^0, g\mapsto x^{-1} gx$ defines an automorphism in $\Aut G^0$.
Let $Lie(G^0)$ be the Lie algebra of $G^0$. Then the canonical group homomorphism from $\Aut G^0$ to
$\Aut Lie(G^0)$ is injective.

Now suppose $G^0\neq \{1\}$. 
Let $a\in E\setminus\{1\}$.
Since $C_G(a)$ is finite, $\zeta_a$ induces a non-trivial automorphism in $\Aut(Lie(G^0))$, and the fixed-point of $\zeta_a$ is $0$, only.
Let $b\in E\setminus\la a\ra$.  Since $\zeta_a$ commutes with $\zeta_b$, there is a common eigenvector $v$ in $Lie(G^0)$.
That means
\[
\zeta_a(v)=\exp({2\pi\sqrt{-1}i/p}) v\quad \text{ and }  \quad \zeta_b(v)=\exp({2\pi\sqrt{-1}j/p}) v
\]
for some $1 \leq i,j \leq p-1$.
Since $p$ is prime, we have $\zeta_a^j\zeta_b^{-i}=\zeta_{a^jb^{-i}}\in E\setminus \{1\}$, and 
$\zeta_a^{j}\zeta_b^{-i}(v)=v$, which is a contradiction.
Hence $G^0=\{1\}$ and $G$ is finite.
\end{proof}

\begin{cor}\label{finiteaut}
Let $V$ be a VOA. 
Assume that $V$ is rational or $C_2$-cofinite.
Moreover, we assume that $\Aut V$ contains an elementary abelian $p$-group $A$ of order  $p^2$ such that $C_{\Aut V}(a)$ is finite for any $a\in A\setminus \{1\}$. 
Then the full automorphism group $\Aut V$ is finite.
\end{cor}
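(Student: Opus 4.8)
The plan is to deduce Corollary \ref{finiteaut} from Theorem \ref{finiteness} by showing that $\Aut V$ is an algebraic group whenever $V$ is rational or $C_2$-cofinite. First I would invoke the known fact that for a VOA $V$ which is rational or $C_2$-cofinite, the automorphism group $\Aut V$ carries the structure of a (linear) algebraic group over $\C$. The key point is that under either hypothesis, $V$ is finitely generated, say by homogeneous vectors $v_1,\dots,v_k$ of bounded weight lying in $\bigoplus_{m\le N} V_m$ for some $N$; then an automorphism $g$ is determined by the tuple $(gv_1,\dots,gv_k)$, and the conditions $g\omega=\omega$ together with the compatibility $gY(v_i,z)v_j = Y(gv_i,z)gv_j$ (and $g\1=\1$) translate into polynomial equations in the finitely many coordinates of $gv_1,\dots,gv_k$ inside the finite-dimensional space $\bigoplus_{m\le N} V_m$. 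Hence $\Aut V$ is realized as a Zariski-closed subgroup of $\GL$ of a finite-dimensional space, i.e.\ a linear algebraic group. I would cite the relevant references for this (e.g.\ the finite generation under $C_2$-cofiniteness and the standard consequence that $\Aut V$ is algebraic).

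Once $\Aut V$ is identified as an algebraic group $G$, the hypotheses of the corollary are precisely those of Theorem \ref{finiteness} with $p$ the given prime and $E=A$: we are told $A$ is an elementary abelian $p$-subgroup of order $p^2$ with $C_{\Aut V}(a)$ finite for every $a\in A\setminus\{1\}$. Applying Theorem \ref{finiteness} directly yields that $G=\Aut V$ is finite, which is the desired conclusion.

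The main obstacle is purely the first step: one must be careful that the claimed algebraic-group structure on $\Aut V$ genuinely follows from rationality or $C_2$-cofiniteness, since this is where finite generation of $V$ is used. If one wants to avoid appealing to an external statement, the argument is: $C_2$-cofiniteness (or rationality, which in the strongly-finite settings in use here also gives finite generation) implies $V$ is generated as a VOA by $\bigoplus_{m\le N}V_m$ for a suitable $N$; an automorphism preserves this subspace and is determined by its restriction to it; and the defining identities cut out a closed subvariety of $\GL(\bigoplus_{m\le N}V_m)$. Everything after that is formal. I would therefore keep the proof of the corollary to one or two lines, namely: ``By the hypothesis and \cite{[refs]}, $\Aut V$ is a linear algebraic group; now apply Theorem \ref{finiteness} with $G=\Aut V$, $E=A$.''

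\begin{proof}
Since $V$ is rational or $C_2$-cofinite, it is finitely generated as a VOA; in particular there is $N\in\Z_{\ge 0}$ such that $V$ is generated by $\bigoplus_{m\le N}V_m$. Any automorphism of $V$ preserves this finite-dimensional graded subspace and is determined by its restriction to it, and the defining relations of an automorphism (fixing $\1$ and $\omega$ and respecting the finitely many products among the generators) are polynomial conditions on that restriction. Hence $\Aut V$ is a Zariski-closed subgroup of $\GL\!\left(\bigoplus_{m\le N}V_m\right)$, i.e.\ a linear algebraic group. Applying Theorem \ref{finiteness} with $G=\Aut V$, $p$ the given prime, and $E=A$, we conclude that $\Aut V$ is finite.
\end{proof}
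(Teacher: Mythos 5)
Your proposal is correct and follows essentially the same route as the paper: finite generation from rationality or $C_2$-cofiniteness, hence $\Aut V$ is an algebraic group (the paper simply cites Dong--Griess for this step, where you sketch the argument), and then Theorem \ref{finiteness} applied with $G=\Aut V$ and $E=A$.
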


\begin{proof}
Since $V$ is $C_2$-cofinite or rational, $V$ is finitely generated. Thus, by \cite{DG}, $\Aut V$ is an algebraic group.  The conclusion now follows from Theorem \ref{finiteness}.
\end{proof}

Combining Lemma \ref{Lem:finite} and Corollary \ref{finiteaut}, we obtain the following theorem:

\begin{thm}\label{Autfinite}
The group $\Aut V^\sharp$ is finite.
\end{thm}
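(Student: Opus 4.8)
The plan is to deduce Theorem~\ref{Autfinite} immediately from Corollary~\ref{finiteaut}, so the only thing that needs checking is that $\Aut V^\sharp$ contains a suitable elementary abelian $3$-subgroup with finite centralizers of all non-identity elements. First I would recall that $V^\sharp$ is $C_2$-cofinite (indeed holomorphic and $C_2$-cofinite by Theorem~\ref{Thm:sharp}), so the hypothesis of Corollary~\ref{finiteaut} on $V$ is satisfied, and hence $\Aut V^\sharp$ is an algebraic group by \cite{DG}.

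Next I would invoke Lemma~\ref{Lem:finite}, which already produces exactly what is needed: an elementary abelian $3$-subgroup $E\le R^*\le\Aut V^\sharp$ of order $3^2$ such that $C_{\Aut V^\sharp}(a)$ is finite for every $a\in E\setminus\{1\}$. (Recall that $E$ is taken to be $\{\varphi_M\mid M\in X\}$ for a $2$-dimensional totally singular subspace $X$ of $(R,q)$; every nonzero element of $X$ is singular, hence conjugate under $N_{\Aut V^\sharp}(R^*)\cong H_2$ to $\varphi_{S^0[1]}=\tau'$ or to $\varphi_{S^0[2]}=(\tau')^2$, and $C_{\Aut V^\sharp}(\tau')\le H_1$ is finite by Proposition~\ref{prop:2.6}.)

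Then, applying Corollary~\ref{finiteaut} with $p=3$, $V=V^\sharp$ and $A=E$, we conclude that $\Aut V^\sharp$ is finite. I do not expect any genuine obstacle here: all the real work — constructing $R^*$, identifying the singular vectors with powers of $\tau'$, and establishing the finiteness of $C_{\Aut V^\sharp}(\tau')$ via the extraspecial structure of $O_3(H_1)$ and the finiteness of $2\Suz{:}2$ — has been carried out in Lemma~\ref{Lem:finite}, Proposition~\ref{prop:2.6}, and Theorem~\ref{finiteness}. The proof of Theorem~\ref{Autfinite} is therefore just the assembly step ``combine Lemma~\ref{Lem:finite} with Corollary~\ref{finiteaut},'' and the statement follows.
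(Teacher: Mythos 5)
Your proposal is correct and follows exactly the paper's argument: the theorem is obtained by combining Lemma~\ref{Lem:finite} (which supplies the elementary abelian $3$-subgroup $E\subset R^*$ with finite centralizers) with Corollary~\ref{finiteaut} (applicable since $V^\sharp$ is $C_2$-cofinite by Theorem~\ref{Thm:sharp}). Nothing further is needed.
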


Let us consider a Sylow $3$-subgroup of $\Aut V$.
Recall from Proposition \ref{prop:2.6} that $H_1= N_{\Aut V^\sharp} (\langle \tau' \rangle  )  \cong  3^{ 1 +12}. ( 2. \Suz {:}2)$ and that $Z(O_3(H_1))=\langle\tau'\rangle$.

\begin{lem}\label{syl3}
The subgroup $H_1$ contains a Sylow $3$-subgroup of $\Aut V^\sharp$, and a Sylow $3$-subgroup of $\Aut V^\sharp$ has order $3^{20}$.
\end{lem}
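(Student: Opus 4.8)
The plan is to pin down a Sylow $3$-subgroup of $\Aut V^\sharp$ inside the known $3$-local subgroup $H_1 = N_{\Aut V^\sharp}(\langle\tau'\rangle) \cong 3^{1+12}.(2.\Suz{:}2)$. First I would compute $|H_1|_3$, the $3$-part of $|H_1|$. Since $O_3(H_1)$ is extraspecial of order $3^{13}$ and $H_1/O_3(H_1) \cong 2.\Suz{:}2$, whose order has $3$-part $3^7$ (the Suzuki group $\Suz$ has $|\Suz|_3 = 3^7$, and the central $2.$ and the outer ${:}2$ contribute no factor of $3$), a Sylow $3$-subgroup $P$ of $H_1$ has order $3^{13}\cdot 3^7 = 3^{20}$.

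Next I would argue that $P$ is in fact a Sylow $3$-subgroup of the whole group $\Aut V^\sharp$. The key point is that $Z(O_3(H_1)) = \langle\tau'\rangle$ is characteristic in $O_3(H_1) = O_3(P)$ (it is the center of the extraspecial group $O_3(H_1)$, and $O_3(H_1)$ is characteristic in any Sylow $3$-subgroup containing it, being the unique maximal normal $3$-subgroup). Hence any $3$-subgroup $Q$ of $\Aut V^\sharp$ with $P \le Q$ normalizes $Z(O_3(P)) = \langle\tau'\rangle$, so $Q \subseteq N_{\Aut V^\sharp}(\langle\tau'\rangle) = H_1$. Since $P$ is already Sylow in $H_1$, this forces $Q = P$, so $P$ is Sylow $3$ in $\Aut V^\sharp$. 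More precisely, to see that a Sylow $3$-subgroup of $H_1$ is self-normalizing among $3$-subgroups of $\Aut V^\sharp$, note that for a Sylow $3$-subgroup $P$ of $H_1$ we have $O_3(H_1) \le P$; since $O_3(H_1)$ is normal in $H_1$ and $P/O_3(H_1)$ is Sylow $3$ in $H_1/O_3(H_1)$, standard arguments give that $P$ contains its own centralizer modulo $O_3(H_1)$ enough to conclude $O_3(H_1) = O_3(P)$, and then the characteristic subgroup $\langle\tau'\rangle = Z(O_3(P))$ argument applies.

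The main obstacle I expect is the bookkeeping in the second paragraph: one must be careful to justify that $O_3(H_1)$ coincides with $O_3(P)$ for a Sylow $3$-subgroup $P$ of $H_1$, and that this in turn is characteristic inside any larger $3$-subgroup of $\Aut V^\sharp$. The cleanest route is: if $P \le Q$ with $Q$ a $3$-group, then $O_3(H_1) \trianglelefteq H_1$ and $O_3(H_1) \le P \le Q$, and since $O_3(H_1)$ is the unique largest normal $3$-subgroup of $H_1 = N_{\Aut V^\sharp}(Z(O_3(H_1)))$, one checks $Q$ normalizes $Z(O_3(H_1)) = \langle\tau'\rangle$ directly from the extraspecial structure (the center is topologically/group-theoretically canonical), giving $Q \le H_1$ and hence $Q = P$. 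Everything else — the value $|\Suz|_3 = 3^7$ and the multiplicativity $|H_1|_3 = 3^{13+7}$ — is routine from the ATLAS data already invoked in the proof of Proposition \ref{prop:2.6}.
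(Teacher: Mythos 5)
Your first paragraph (the computation $|H_1|_3=3^{13}\cdot 3^7=3^{20}$ from the ATLAS order of $\Suz$) is exactly the paper's argument and is fine. The second half, however, has a genuine gap. First, for a $3$-group $P$ one has $O_3(P)=P$, so the identification $O_3(H_1)=O_3(P)$ is simply false, and the parenthetical justification that $O_3(H_1)$ is ``the unique maximal normal $3$-subgroup'' of $P$ is wrong for the same reason: inside a $3$-group that unique maximal normal $3$-subgroup is the whole group. There is no evident reason why $O_3(H_1)$ should be characteristic in a Sylow $3$-subgroup $P$ of $H_1$. Second, even granting that, your conclusion that every $3$-subgroup $Q\supseteq P$ normalizes $\langle\tau'\rangle$ would require $P\trianglelefteq Q$, which is not given; at best the normalizer-growth property of $p$-groups lets you work with $N_{\tilde P}(P)$, but you never make that reduction.

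The ingredient you are missing is the self-centralizing property $C_{H_1}(O_3(H_1))=Z(O_3(H_1))=\langle\tau'\rangle$ established in Proposition \ref{prop:2.6}; this is what the paper uses to pin the relevant characteristic subgroup down as a \emph{center}. Concretely: since $O_3(H_1)\subseteq P$, one gets $Z(P)\subseteq C_{H_1}(O_3(H_1))=\langle\tau'\rangle$, and conversely $\tau'\in Z(P)$ because the $3$-group $P$ must act trivially on $\langle\tau'\rangle\cong\Z_3$; hence $Z(P)=\langle\tau'\rangle$. For a Sylow $3$-subgroup $\tilde P$ of $\Aut V^\sharp$ containing $P$, the center $Z(\tilde P)$ centralizes $\tau'$, hence lies in $H_1$, and centralizes $O_3(H_1)$, hence lies in $\langle\tau'\rangle$; being nontrivial it equals $\langle\tau'\rangle$. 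Now the point is that $\tilde P$ normalizes its \emph{own} center --- no characteristicity of $O_3(H_1)$ in anything is needed --- so $\tilde P\subseteq N_{\Aut V^\sharp}(\langle\tau'\rangle)=H_1$ and therefore $\tilde P=P$. Your write-up never establishes $Z(P)=\langle\tau'\rangle$ (or $Z(\tilde P)=\langle\tau'\rangle$), and without that the self-normalizing step does not go through.
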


\begin{proof}
Let $P$ be a Sylow $3$-subgroup of $H_1$, and let $\tilde{P}$ be a Sylow $3$-subgroup of $\Aut V^\sharp$ such that $P \subset \tilde{P}$.
Let us show that $Z(\tilde{P}) =Z(O_3(H_1))$.
Since  $P \subset \tilde{P}$, we have $1\neq Z(\tilde{P}) \subset Z(P)$.
Recall from Proposition \ref{prop:2.6} that $C_{H_1}(O_3(H_1))=Z(O_3(H_1))$.
It follows from $O_3(H_1)\subset P$ that $Z(P)\subset C_{H_1}(O_3(H_1))=Z(O_3(H_1))$.
Clearly, $Z(O_3(H_1))\subset Z(P)$, and $Z(P)=Z(O_3(H_1))\cong \Z_3$.
Hence $Z(P)=Z(\tilde{P})$.
Thus $\tilde{P}\subset H_1$, and $P=\tilde{P}$.

Now note that the sporadic Suzuki group $\Suz$ has order $2^{13}\cdot3^7\cdot 5^2\cdot 7\cdot 11\cdot 13$ and hence a Sylow $3$-subgroup of $H_1\cong 3^{ 1 +12}. ( 2. \Suz {:}2)$ has order $3^{20}$.
\end{proof}

\begin{thm}\label{monster}
The full automorphism group of $V^\sharp$ is isomorphic to the Monster group.
\end{thm}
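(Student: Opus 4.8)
The plan is to verify that the three subgroups constructed in this section satisfy all hypotheses of the $3$-local characterization of the Monster, namely Theorem \ref{thm:3local_monster}, and then to invoke that theorem directly. First I would note that by Theorem \ref{Autfinite} the group $G=\Aut V^\sharp$ is finite, so the characterization theorem is applicable. Let $S$ be a Sylow $3$-subgroup of $G$; by Lemma \ref{syl3} we may assume $S\subset H_1$, and $|S|=3^{20}$. The task is then to check that $H_1$ and $H_2$ play the roles of the two subgroups in Theorem \ref{thm:3local_monster}.

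Next I would assemble the verification of the three conditions from the results already proved. Condition \eqref{3Monster:1} is exactly Proposition \ref{prop:2.6}: $H_1=N_G(Z(O_3(H_1)))$ since $Z(O_3(H_1))=\langle\tau'\rangle$ and $H_1=N_{\Aut V^\sharp}(\langle\tau'\rangle)$ by definition; $O_3(H_1)$ is extraspecial of order $3^{13}$; $H_1/O_3(H_1)\cong 2\Suz{:}2$; and $C_{H_1}(O_3(H_1))=Z(O_3(H_1))$. Condition \eqref{3Monster:2} is Proposition \ref{NA} together with the identification of $O_3(H_2)=R^*$ as a natural $\Omega_8^-(3).2$-module, which comes from Lemma \ref{Lem:finite} and the construction in Section 3.4 (the action of $\Omega_8^-(3)$ on $R\cong\F_3^8$ is the natural orthogonal module). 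Condition \eqref{3Monster:3} is precisely Lemma \ref{Lem:inter}, which gives $(H_1\cap H_2)/O_3(H_2)\cong 3^6{:}(2{\cdot}PSU_4(3){:}2^2)$, an extension of an elementary abelian group of order $3^6$ by $2{\cdot}PSU_4(3){:}2^2$. One also needs that $H_1$ and $H_2$ both contain a common Sylow $3$-subgroup $S$: $H_1$ contains one by Lemma \ref{syl3}, and since $|H_2|_3=3^8\cdot 3^{12}=3^{20}$ (as $|\Omega_8^-(3)|_3=3^{12}$) while $|H_1\cap H_2|_3=3^{20}$ as well by Lemma \ref{Lem:inter}, the intersection $H_1\cap H_2$ contains a Sylow $3$-subgroup of $G$, which can be taken to be $S$; hence $S\subset H_1$ and $S\subset H_2$.

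Having checked the hypotheses, I would conclude by Theorem \ref{thm:3local_monster} that $G=\Aut V^\sharp$ is isomorphic to the Monster simple group $\M$.

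The main obstacle I anticipate is not any single deep step but rather the careful bookkeeping needed to confirm that $O_3(H_2)=R^*$ really is a \emph{natural} module for $H_2/O_3(H_2)\cong\Omega_8^-(3).2$ in the precise sense required by condition \eqref{3Monster:2}, and to pin down that the Sylow $3$-subgroups of $H_1$ and $H_2$ can be chosen to coincide (so that the single group $S$ in the theorem sits inside both). The former follows because, by construction in Section 3.4, we identified $R^*$ with $R\cong(R(W),q)$ as an $8$-dimensional orthogonal $\F_3$-space of minus type and the relevant quotient acts on it through its image in $O(R,q)$, which contains $\Omega_8^-(3)$ acting naturally; the latter is a consequence of the Sylow order count above combined with Lemma \ref{syl3}. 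These are routine once spelled out, so the proof of Theorem \ref{monster} is essentially an application of the accumulated structural results to Theorem \ref{thm:3local_monster}.
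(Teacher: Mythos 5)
Your proposal is correct and follows essentially the same route as the paper: finiteness via Theorem \ref{Autfinite}, the Sylow $3$-order $3^{20}$ via Lemma \ref{syl3}, the common Sylow $3$-subgroup inside $H_1\cap H_2$ via the order of $PSU_4(3)$ and Lemma \ref{Lem:inter}, and then verification of hypotheses (i)--(iii) by Propositions \ref{prop:2.6} and \ref{NA} and Lemma \ref{Lem:inter} before invoking Theorem \ref{thm:3local_monster}. Your extra care about the natural-module condition and the choice of a common Sylow subgroup only makes explicit what the paper leaves implicit.
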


\begin{proof}
By Theorem \ref{Autfinite} and Lemma \ref{syl3}, $\Aut V^\sharp$ is finite and a Sylow $3$-subgroup
has order $3^{20}$. By \cite[p52]{Atlas}, we have
$| PSU(4,3)|= 2^7\cdot 3^6\cdot 5\cdot 7$, and hence $H_1\cap H_2$ contains a Sylow $3$-subgroup of $\Aut V^\sharp$ by Lemmas \ref{Lem:inter} and \ref{syl3}.
Clearly, $H_1$ and $H_2$ contain a common Sylow $3$-subgroup of $\Aut V^\sharp$.
Moreover, by Propositions \ref{prop:2.6} and \ref{NA} and Lemma \ref{Lem:inter}, the subgroups $H_1$ and $H_2$ satisfy the hypotheses (i), (ii) and (iii) in Theorem \ref{thm:3local_monster}. Thus, we have $\Aut V^\sharp \cong \M$.
\end{proof}

\section{Unitary VOA}

\subsection{Unitary VOA and unitary modules}

In this subsection, we recall the notion of unitary VOAs and unitary modules from \cite{Dlin}.

\begin{defi} Let $(V, Y, \1,\omega)$ be a vertex operator algebra. An anti-linear
isomorphism $\phi : V \to V$ is called an {\it anti-linear automorphism} of $V$ if $\phi(\1) = \1, \phi(\omega) =\omega$ and $\phi(u_n v) = \phi(u)_n \phi(v)$ for any $u, v \in  V$ and $n\in \Z$.
An {\it anti-linear involution} of $V$ is an anti-linear automorphism of order $2$.
\end{defi}

\begin{defi}[\cite{Dlin}] Let $(V, Y, \1,\omega)$ be a vertex operator algebra and let $\phi:V\to V$ be an anti-linear involution of $V$. Then $(V, \phi)$ is said to be {\it unitary} if there exists a positive-definite Hermitian form $(\ ,\ )_V : V \times V \to \C$, which is $\C$-linear on the first vector and anti-$\C$-linear on the second vector, such that the following invariant property holds for any $a, u, v\in V$:
\[
(Y (e^{zL(1)} (-z^{-2})^{L(0)} a, z^{-1})u, v)_V = (u, Y (\phi(a), z)v)_V,
\]
where $L(n)$ is defined by $Y (\omega, z) = \sum_{n\in \Z} L(n)z^{-n-2}$.
\end{defi}

\begin{rmk}\label{Rem:inv}
Let $(V,\phi)$ be a simple unitary VOA with invariant Hermitian form $(\cdot,\cdot)_V$. 
Then $V$ is self-dual and of CFT-type (\cite[Proposition 5.3]{CKLW}), and $V$ has a unique invariant symmetric bilinear form $\langle\cdot,\cdot\rangle$ up to scalar (\cite{L94}).
Normalizing $(\1,\1)_V=\langle\1,\1\rangle=1$, we obtain $(u,v)_V=\langle u,\phi(v)\rangle$ for all $u,v\in V$.
Note that $(\phi(u),\phi(v))_V=\overline{(u,v)}_V=(v,u)_V$ for $u,v\in V$.
\end{rmk}
\begin{defi}[\cite{Dlin}]
Let $(V,\phi)$ be a unitary VOA and $g$ a finite order automorphism of $V$. 
An (ordinary) $g$-twisted $V$-module $(M, Y_M)$ is called a {\it unitary $g$-twisted $V$-module} if there exists
a positive-definite Hermitian form $(\ ,\  )_M : M \times M \to  \C$ such that the following
invariant property holds for $a\in V$ and $w_1,w_2 \in M$:
\begin{equation}
(Y_M(e^{zL(1)} (-z^{-2})^{L(0)} a, z^{-1})w_1, w_2)_M = (w_1, Y_M(\phi(a), z)w_2)_M.\label{Eq:Inv}
\end{equation}
We call such a form {\it a positive-definite invariant Hermitian form}.
\end{defi}

The following lemma follows from the similar argument as in \cite[Remark 5.3.3]{FHL}.

\begin{lem}[{cf.\ \cite[Remark 5.3.3]{FHL}}]\label{Rem:specify}
Let $(V,\phi)$ be a unitary VOA.
Let $M$ be a $V$-module and let $M'$ be the contregredient module of $M$ with a natural pairing $\langle\cdot,\cdot\rangle$ between $M$ and $M'$. 
\begin{enumerate}
\item If $M$ has a non-degenerate invariant sesquilinear form $(\cdot,\cdot)$, which is linear on the first vector and anti-$\C$-linear on the second vector and satisfies the invariant property \eqref{Eq:Inv},  then the map $\Phi:M\to M'$ defined by $(u,v)=\langle u,\Phi(v)\rangle$, $u,v\in M$, is an anti-linear bijective map and $\Phi(a_nu)=\phi(a)_n\Phi(u)$ for $a\in V$ and $u\in M$.
\item If there exists an anti-linear bijective map $\Phi:M\to M'$ such that $\Phi(a_nu)=\phi(a)_n\Phi(u)$ for $a\in V$ and $u\in M$, then $(u,v)=\langle u,\Phi(v)\rangle$, $u,v\in M$, is a non-degenerate invariant sesquilinear form on $M$.
\end{enumerate}
\end{lem}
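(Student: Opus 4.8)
The plan is to prove Lemma \ref{Rem:specify} by mimicking the classical argument of \cite[Remark 5.3.3]{FHL} that identifies invariant bilinear forms on a module $M$ with module homomorphisms $M\to M'$, but now in the anti-linear (sesquilinear) setting, with the anti-linear involution $\phi$ of $V$ absorbing the conjugation. The central idea is that the invariant property \eqref{Eq:Inv} is precisely the condition that makes the pairing descend to a well-defined map into the contragredient module, where the action on $M'$ is twisted by the formal adjoint $Y_{M'}(a,z)=Y_M(e^{zL(1)}(-z^{-2})^{L(0)}a,z^{-1})^{t}$. I will treat the two parts in turn, with part (1) being the substantive one and part (2) essentially its converse.

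For part (1): given a non-degenerate invariant sesquilinear form $(\cdot,\cdot)$ on $M$, first note that for each fixed $v\in M$ the assignment $u\mapsto (u,v)$ is a $\C$-linear functional on $M$, hence lies in the restricted dual of $M$, which is exactly $M'$ by definition; thus there is a unique element $\Phi(v)\in M'$ with $(u,v)=\langle u,\Phi(v)\rangle$ for all $u$. Since $(\cdot,\cdot)$ is anti-$\C$-linear in the second argument and the pairing $\langle\cdot,\cdot\rangle$ is bilinear, $\Phi$ is anti-linear; non-degeneracy of $(\cdot,\cdot)$ forces $\Phi$ to be injective, and since $\dim M_n=\dim M'_n$ for each weight space (the contragredient has the same graded dimensions), $\Phi$ is bijective. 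The key computation is the intertwining identity: starting from $(Y_M(e^{zL(1)}(-z^{-2})^{L(0)}a,z^{-1})u,v)_M=(u,Y_M(\phi(a),z)v)_M$, rewrite the left side as $\langle Y_M(e^{zL(1)}(-z^{-2})^{L(0)}a,z^{-1})u,\Phi(v)\rangle$, which by the definition of the contragredient action equals $\langle u,Y_{M'}(a,z)\Phi(v)\rangle$; the right side equals $\langle u,\Phi(Y_M(\phi(a),z)v)\rangle$ (anti-linearity of $\Phi$ extends coefficientwise to formal series, so $\Phi(Y_M(\phi(a),z)v)=Y_{M'}(a,z)$-type series in the appropriate sense — here one must be careful that $\Phi$ applied to a series $\sum (\phi(a)_nv)z^{-n-1}$ gives $\sum \Phi(\phi(a)_nv)z^{-n-1}$ since $\Phi$ is anti-linear but $z$ is a formal variable, so no conjugation of powers of $z$ occurs). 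Comparing coefficients of each $z^{-n-1}$ and using that $u$ ranges over all of $M$ while the pairing is non-degenerate yields $Y_{M'}(a,z)\Phi(v)=\Phi(Y_M(\phi(a),z)v)$, i.e.\ $a_n\Phi(v)=\Phi(\phi(a)_nv)$ in $M'$, which is exactly $\Phi(a_nu)=\phi(a)_n\Phi(u)$ after relabeling. Strictly one should first check the $n=0$, $n=1$ cases (the $L(-1)$ and $L(0)$ compatibility) to know $\Phi$ is graded of the right degree, but these come out of the same identity applied to $a=\omega$.

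For part (2): conversely, given an anti-linear bijection $\Phi:M\to M'$ with $\Phi(a_nu)=\phi(a)_n\Phi(u)$, define $(u,v):=\langle u,\Phi(v)\rangle$. Bilinearity of $\langle\cdot,\cdot\rangle$ together with anti-linearity of $\Phi$ gives that $(\cdot,\cdot)$ is linear in the first slot and anti-linear in the second; bijectivity of $\Phi$ and non-degeneracy of the natural pairing give non-degeneracy of $(\cdot,\cdot)$; and running the computation of part (1) backwards — expressing $\langle u, Y_{M'}(a,z)\Phi(v)\rangle$ two ways using the contragredient formula and the intertwining property of $\Phi$ — recovers the invariant property \eqref{Eq:Inv}. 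I do not expect a genuine obstacle here; the only delicate point, and the one I would be most careful to get right in writing, is the bookkeeping of where complex conjugation enters: $\Phi$ is anti-linear on vectors but the module vertex operators are $\C[[z^{1/p},z^{-1/p}]]$-valued with formal $z$, so $\phi$ (not conjugation of scalars in $z$) is what appears, and this is exactly why the statement is phrased with $\phi(a)$ on the right-hand side of \eqref{Eq:Inv} rather than with a bar. This is entirely parallel to \cite[Remark 5.3.3]{FHL}; the present lemma is just its ``unitary'' (Hermitian, $\phi$-twisted) analogue, so the proof is a routine adaptation and can be stated briefly.
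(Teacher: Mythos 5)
Your proposal is correct and is essentially the same argument the paper intends: the paper gives no written proof, only the remark that the lemma "follows from the similar argument as in \cite[Remark 5.3.3]{FHL}," and your adaptation of that argument to the sesquilinear, $\phi$-twisted setting (defining $\Phi$ via the pairing, using invariance with $a=\omega$ to see $\Phi$ is graded, and matching coefficients of $z^{-n-1}$ to get the intertwining identity, with the converse run backwards) is exactly that. No gaps.
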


\begin{lem}\label{uniqueF}
Let $(V,\phi)$ be a unitary VOA. Let $M$ be an irreducible $V$-module. 
Then there exists at most one non-degenerate invariant sesquilinear form on $M$ (up to scalar).
\end{lem}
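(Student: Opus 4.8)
The plan is to reduce the uniqueness of a non-degenerate invariant sesquilinear form on an irreducible $V$-module $M$ to the well-known uniqueness (up to scalar) of a non-degenerate invariant \emph{bilinear} pairing between $M$ and its contragredient $M'$. By Lemma \ref{Rem:specify}, giving a non-degenerate invariant sesquilinear form $(\cdot,\cdot)$ on $M$ is the same as giving an anti-linear bijection $\Phi\colon M\to M'$ satisfying $\Phi(a_nu)=\phi(a)_n\Phi(u)$ for all $a\in V$, $u\in M$, $n\in\Z$. So it suffices to show such a $\Phi$ is unique up to a nonzero scalar.

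First I would fix one such map, say $\Phi_0$ (its existence is not needed for the uniqueness statement, but if two forms exist we may call one of them $\Phi_0$), and let $\Phi_1$ be another. Then $\Phi_1\circ\Phi_0^{-1}$ is a \emph{linear} bijection $M'\to M'$. The key computation is that it intertwines the $V$-action: since each $\Phi_i$ converts the $V$-action on $M$ into the $\phi$-twisted action on $M'$ (i.e. $a\mapsto \phi(a)$ composed with the usual $V$-action), composing $\Phi_1$ with $\Phi_0^{-1}$ cancels the anti-linearity and the $\phi$-twist, leaving an honest $V$-module endomorphism of $M'$. Because $M$ is irreducible, $M'$ is irreducible as well (contragredient of irreducible is irreducible, cf.\ \cite[Section 5.2]{FHL}), so by a Schur-type lemma for VOA modules $\Phi_1\circ\Phi_0^{-1}$ is a scalar multiple of the identity; hence $\Phi_1=\lambda\Phi_0$ for some $\lambda\in\C^\times$, and correspondingly the two sesquilinear forms differ by the scalar $\lambda$.

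The one point that needs a little care — and is the main obstacle — is justifying the Schur-type statement: an irreducible module over a VOA (which is generally infinite-dimensional and graded) has only scalar endomorphisms. This holds because the module is $\C$-graded with finite-dimensional graded pieces and bounded below, so a nonzero endomorphism restricts to an endomorphism of the (finite-dimensional) lowest-weight space, has an eigenvalue there, and subtracting that eigenvalue times the identity yields an endomorphism with nonzero kernel; irreducibility forces the kernel to be all of $M'$. This is standard (it is implicitly used throughout \cite{FHL}), and I would simply cite it. With that in hand, the uniqueness is immediate. One should also note, for completeness, that $\Phi_1\circ\Phi_0^{-1}$ preserves the grading (both $\Phi_i$ are degree-preserving since they come from invariant forms, which pair $M_k$ with $M'_k$), so the eigenvalue argument applies on each graded piece consistently; this is the only slightly technical verification in the argument.
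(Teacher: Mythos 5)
Your proof is correct and takes essentially the same route as the paper: both reduce via Lemma \ref{Rem:specify} to the uniqueness of the anti-linear intertwiner $\Phi$, compose one such map with the inverse of another to obtain a $\C$-linear $V$-module endomorphism (you work on $M'$, the paper on $M$ -- an immaterial difference), and conclude by Schur's lemma. Your extra paragraph justifying the Schur-type statement for graded modules is a detail the paper leaves to the reader, but it is accurate.
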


\begin{proof}
By Lemma \ref{Rem:specify}, a non-degenerate invariant sesquilinear form on $M$ is uniquely determined by an anti-linear bijective map $\Phi: M \to M'$ such that $\Phi(u_nw)=\phi(u)_n \Phi(w)$ for any $u\in V$ and $w\in M$.   

Let $\Phi_1$ and $\Phi_2$ be two such maps. Then it is straightforward to show that 
$\Phi_2^{-1}\Phi_1$ is a $\C$-linear map and is a $V$-module isomorphism from $M$ to $M$. Then, by Schur's lemma, $\Phi_2^{-1}\Phi_1=\lambda \mathrm{id}_M$ for some scalar $\lambda$ and we have the desired conclusion.  
\end{proof}

\begin{prop}{\rm (\cite[Theorem 5.21]{CKLW})}\label{Prop:CKLW} Let $(V,\phi)$ be a unitary VOA.
Assume that $\Aut V$ is compact.
Then the following hold:
\begin{enumerate}[{\rm (1)}]
\item Any automorphism of $V$ commutes with $\phi$.
\item A positive-definite invariant Hermitian form on $V$ is preserved by any automorphism of $V$.
\end{enumerate}
\end{prop}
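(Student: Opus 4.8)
The plan is to reduce both claims to the single assertion that every $g\in\Aut V$ is \emph{unitary} for the positive-definite invariant Hermitian form $(\cdot,\cdot)_V$, i.e.\ $(gu,gv)_V=(u,v)_V$ for all $u,v\in V$. Granting this, (2) is immediate. For (1), recall from Remark \ref{Rem:inv} that after normalization $(u,v)_V=\langle u,\phi(v)\rangle$, where $\langle\cdot,\cdot\rangle$ is the invariant symmetric bilinear form, non-degenerate and unique up to scalar by \cite{L94}; applying that uniqueness to the pullback $\langle g\,\cdot,g\,\cdot\rangle$ and evaluating at $\1$ (using $g\1=\1$) gives $\langle gu,gv\rangle=\langle u,v\rangle$. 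Then, for all $u,v$, unitarity yields $\langle gu,\phi g(v)\rangle=(gu,gv)_V=(u,v)_V=\langle gu,g\phi(v)\rangle$, so $\langle w,\phi g(v)\rangle=\langle w,g\phi(v)\rangle$ for all $w$, and non-degeneracy forces $\phi g=g\phi$, which is (1). So it suffices to prove that every automorphism is unitary.

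First I would bring in the adjoint. For $g\in\Aut V$, since $g\omega=\omega$, $g$ preserves the grading, so the adjoint $g^{\dagger}$ of $g$ with respect to $(\cdot,\cdot)_V$ is a well-defined graded linear map on $V$. Using $(u,v)_V=\langle u,\phi(v)\rangle$ and $\langle gx,gy\rangle=\langle x,y\rangle$, one computes
\[
(gu,v)_V=\langle gu,\phi(v)\rangle=\langle u,g^{-1}\phi(v)\rangle=\bigl(u,\phi g^{-1}\phi(v)\bigr)_V ,
\]
so that $g^{\dagger}=\phi\, g^{-1}\,\phi$. Being a composite of the anti-linear automorphism $\phi$ (twice) and the linear automorphism $g^{-1}$, this is a $\C$-linear automorphism of $V$; in particular $g^{\dagger}g\in\Aut V$. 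Moreover $(g^{\dagger}g\,u,u)_V=(gu,gu)_V\ge 0$ with equality only for $u=0$, and $(g^{\dagger}g\,u,v)_V=(gu,gv)_V=(u,g^{\dagger}g\,v)_V$; hence on each finite-dimensional $V_n$ the operator $P:=g^{\dagger}g$ is self-adjoint and positive-definite for $(\cdot,\cdot)_V$, so $P|_{V_n}$ is diagonalizable with eigenvalues in $\R_{>0}$.

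The key step is then a compactness argument. Since $\Aut V$ is compact and contains $P$ and $P^{-1}$, the set $\{P^{k}\mid k\in\Z\}$ is relatively compact in $\Aut V$, so $\{P^{k}|_{V_n}\mid k\in\Z\}$ is a bounded subset of $\End(V_n)$ for every $n$. If $P|_{V_n}$ had an eigenvalue $\lambda>1$ then $\|P^{k}|_{V_n}\|\ge\lambda^{k}\to\infty$ as $k\to\infty$, and if $\lambda<1$ then $\|P^{-k}|_{V_n}\|\ge\lambda^{-k}\to\infty$; both being impossible, every eigenvalue of $P|_{V_n}$ equals $1$, whence $P|_{V_n}=\mathrm{id}$. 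As this holds for all $n$, $g^{\dagger}g=\mathrm{id}$ on $V$, i.e.\ $g^{\dagger}=g^{-1}$; therefore $(gu,gv)_V=(u,g^{\dagger}g\,v)_V=(u,v)_V$, so $g$ is unitary and the proof is complete. The hard part is not any single calculation but the two conceptual points: that the adjoint of an automorphism is again an automorphism — which is exactly where the rigidity of the invariant bilinear form (Remark \ref{Rem:inv}) enters — and that it is compactness of the \emph{whole} group $\Aut V$, not merely of the evident subgroup of unitary automorphisms, that pins down the spectrum of $g^{\dagger}g$ and forces it to be trivial.
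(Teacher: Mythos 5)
The paper gives no proof of this proposition: it is imported verbatim as \cite[Theorem 5.21]{CKLW}, so there is nothing internal to compare against. Your argument is correct and is essentially the one in \cite{CKLW}: identify the adjoint of $g$ as $\phi g^{-1}\phi$, observe that it is again a VOA automorphism, and use compactness of $\Aut V$ to force the positive self-adjoint element $g^{\dagger}g$ to have spectrum $\{1\}$ on each $V_n$; the derivation of (1) from unitarity via non-degeneracy of $\langle\cdot,\cdot\rangle$ is also the standard route. The only caveat worth recording is that you invoke Remark \ref{Rem:inv} (existence and uniqueness up to scalar of the invariant bilinear form, and the identity $(u,v)_V=\langle u,\phi(v)\rangle$), which the paper states for \emph{simple} unitary VOAs; simplicity is likewise a hypothesis of the cited theorem in \cite{CKLW} and holds in every application made in this paper, but it is not written into the proposition as displayed, so you should state it explicitly before using the uniqueness of $\langle\cdot,\cdot\rangle$ to conclude $\langle gu,gv\rangle=\langle u,v\rangle$ and before appealing to Schur-type uniqueness in step (2).
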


\subsection{Unitary VOA structure on $\Z_3^2$-graded VOA}
In this subsection, we prove the following theorem about a unitary VOA structure on a $\Z_3^2$-graded VOA, which will be applied to the holomorphic VOA $V^\sharp$, later.

\begin{thm}\label{Thm:uni} Let $V$ be a self-dual, simple VOA of CFT-type.
Assume that $V$ has two commuting automorphisms $f$ and $g$ of order $3$.
For $i,j\in\Z$, set $V^{i,j}=\{v\in V\mid f(v)=\xi^i v,\ g(v)=\xi^j v\}$, where $\xi=\exp(2\pi\sqrt{-1}/3)$.
Set $V^i=\bigoplus_{j=0}^2 V^{i,j}$.
Assume the following:
\begin{enumerate}[{\rm (A)}]
\item There exists an anti-linear involution $\phi$ of $V^0$ such that $(V^0,\phi)$ is a unitary VOA;
\item For $i\in\{1,2\}$, $V^i$ is a unitary $(V^0,\phi)$-module;
\item There exists an automorphism $\psi$ of $V$ such that $\psi^{-1}f\psi=g$ and $\psi^{-1}g\psi=f$;
\item Both the full automorphism groups of $V^{0,0}$ and $V^0$ are compact;
\end{enumerate}
Then there exist an anti-linear involution $\Phi$ of $V$ such that $(V,\Phi)$ is a unitary VOA.
\end{thm}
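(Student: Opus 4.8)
The plan is to construct the anti-linear involution $\Phi$ on $V = \bigoplus_{i=0}^{2} V^i$ by first fixing a positive-definite invariant Hermitian form on each graded piece and then gluing. On $V^0$ we already have $\phi$ and its invariant form $(\cdot,\cdot)_{V^0}$ from hypothesis (A). By hypothesis (B), for $i\in\{1,2\}$ the module $V^i$ carries a positive-definite invariant Hermitian form; by Lemma \ref{uniqueF} this form is unique up to a positive scalar, so the only freedom is a choice of normalization on $V^1$ and on $V^2$. Via Lemma \ref{Rem:specify}, each such form corresponds to an anti-linear bijection $\Phi_i : V^i \to (V^i)'$ intertwining the $V^0$-action with $\phi$; since $V$ is self-dual and the $V^i$ are the isotypic-type pieces, we may identify $(V^1)' \cong V^2$ and $(V^2)' \cong V^1$ as $V^0$-modules, so $\Phi_1$ and $\Phi_2$ can be regarded as anti-linear maps $V^1 \to V^2$ and $V^2 \to V^1$. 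Setting $\Phi = \phi$ on $V^0$ and $\Phi = \Phi_i$ on $V^i$ gives a candidate anti-linear map $V\to V$; one then has to (a) pin down the normalizations so that $\Phi^2 = \mathrm{id}$ and so that $\Phi$ respects \emph{all} the products $u_n v$ (not just those with $u \in V^0$), and (b) verify positive-definiteness of the resulting Hermitian form $(\cdot,\cdot)_V := \langle\cdot,\Phi(\cdot)\rangle$ on all of $V$.

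The role of the symmetry $\psi$ from hypothesis (C) is exactly to fix the relative normalization of the forms on $V^1$ and $V^2$. Since $\psi$ conjugates $f$ to $g$ and $g$ to $f$, it preserves $V^0$ and permutes the finer pieces $V^{i,j} \leftrightarrow V^{j,i}$; in particular $\psi$ restricts to an automorphism of $V^{0,0}$ and of $V^0$. Because $\Aut V^0$ is compact (hypothesis (D)), Proposition \ref{Prop:CKLW} tells us $\psi|_{V^0}$ commutes with $\phi$ and preserves the form on $V^0$. I would then \emph{transport} the chosen form on $V^1$ through $\psi$ to get a form on $\psi(V^1)$; the key point (this is what the introduction flags — ``$\psi$ adjusts the unitary forms'') is that $\psi$ and $\psi^{-1}$ together force the forms on $V^1$ and $V^2$ to be consistently normalized, i.e. we can choose the scalars so that $\Phi \circ \psi = \psi \circ \Phi$ holds on $V^1 \oplus V^2$ as well. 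Using $V^{0,0} = V^{\langle f,g\rangle}$ and its compact automorphism group, the same argument controls the even finer decomposition under $\langle f,g,\psi\rangle$, which is what one needs to propagate the Hermitian form coherently across all nine pieces $V^{i,j}$.

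Once $\Phi$ is defined and shown to be anti-linear, bijective, and of order $2$ (using $\phi^2=\mathrm{id}$ on $V^0$ and the normalization that makes $\Phi_2 \Phi_1 = \mathrm{id}_{V^1}$, $\Phi_1 \Phi_2 = \mathrm{id}_{V^2}$), the remaining work is to check $\Phi(u_n v) = \Phi(u)_n \Phi(v)$ for arbitrary $u,v \in V$ and the invariance identity for the form $(\cdot,\cdot)_V$. By $\Z_3$-bilinearity of $Y$ with respect to the grading it suffices to check this when $u \in V^i$, $v \in V^j$; the cases with $i=0$ or $j=0$ are the module-compatibility already built in via Lemma \ref{Rem:specify}, and the cases $i,j \in \{1,2\}$ follow from the $\psi$-equivariance $\Phi\psi = \psi\Phi$ together with the fact that $\psi$ is a genuine VOA automorphism (so it commutes with all $u_n$). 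Positive-definiteness of $(\cdot,\cdot)_V$ on $V^0$ is (A), on $V^1$ and $V^2$ it is (B) with the normalization fixed above, and the three pieces are mutually orthogonal because they are distinct eigenspaces of $f$ (an automorphism, hence an isometry for the invariant bilinear form). The main obstacle I expect is step (a): showing the normalizations on $V^1$ and $V^2$ can be chosen \emph{simultaneously} so that both $\Phi^2 = \mathrm{id}$ and $\Phi$ is multiplicative — this is precisely where one must exploit that $\psi$ swaps $f,g$ (so $\psi^2$ centralizes both and lies in a compact group), rather than merely that some symmetry exists; everything else is a (lengthy but routine) verification of the invariance identity from the definitions.
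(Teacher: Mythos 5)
Your overall architecture is the paper's: normalize the invariant Hermitian forms on $V^1$ and $V^2$ using $\psi$ and the uniqueness statement of Lemma \ref{uniqueF}, convert them via Lemma \ref{Rem:specify} into anti-linear maps $\Phi^i\colon V^i\to V^{2i}$, glue these with $\phi$ into $\Phi$, and verify the axioms. However, two of your steps do not work as stated. First, the normalization: you propose to transport the form on $V^1$ through $\psi$ and to choose scalars so that $\Phi\psi=\psi\Phi$ holds on $V^1\oplus V^2$. Since $\psi$ swaps $f$ and $g$, it sends $V^{i,j}$ to $V^{j,i}$, so $\psi(V^1)=\bigoplus_j V^{j,1}$ is the $\xi$-eigenspace of $g$; it meets all three $f$-eigenspaces and is not $V^2$. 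Hence $\psi$ cannot directly calibrate the form on $V^1$ against the form on $V^2$, and demanding $\Phi\psi=\psi\Phi$ on all of $V^1\oplus V^2$ imposes many scalar conditions (one per piece $V^{i,j}$) with only two free positive scalars available. The paper instead normalizes each $(\cdot,\cdot)_{V^i}$ separately by requiring its restriction to the irreducible $V^{0,0}$-submodule $V^{i,0}$ to coincide with the $\psi$-transport $(\psi^{-1}\cdot,\psi^{-1}\cdot)_{V^0}$ of the form on $V^0$; this is where Lemma \ref{uniqueF} is applied over $V^{0,0}$ rather than $V^0$, and where the compactness of $\Aut V^{0,0}$ (forcing $\psi\phi\psi^{-1}=\phi$ there) enters. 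The identity $\Phi^{2i}\circ\Phi^i=\mathrm{id}$ is then checked on $V^{i,0}$ and propagated to all of $V^i$ by Schur's lemma.

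Second, the multiplicativity $\Phi(u_nv)=\Phi(u)_n\Phi(v)$ for $u,v$ both outside $V^0$. Even granting $\Phi\psi=\psi\Phi$, conjugating the known case $u\in V^0$ by $\psi$ only yields the case $u\in\psi(V^0)=\bigoplus_i V^{i,0}$; it never reaches $u\in V^{i,j}$ with $i,j$ both nonzero, because $\psi$ permutes the nine pieces and never maps such a ``corner'' into the ``cross'' $V^0\cup\psi(V^0)$. Worse, verifying $\Phi\psi=\psi\Phi$ on a corner piece already presupposes the multiplicativity you are trying to prove, so the argument as outlined is circular. The missing ingredient is the generation statement $V^{i,j}=\Span_\C\{x_ry\mid x\in V^{i,0},\ y\in V^{0,j},\ r\in\Z\}$ (a consequence of the $\Z_3^2$-grading and the irreducibility of the $V^{0,0}$-modules $V^{i,j}$), combined with Borcherds' identity to expand $(x_ry)_qu$ into successive actions of modes of $x\in V^0$ and $y\in\psi(V^0)$, for which the identity is already known. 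You should make this generation-plus-Borcherds reduction explicit; the $\psi$-equivariance alone will not close the argument. The remaining points of your sketch (orthogonality of the $V^i$ with respect to $\langle\cdot,\cdot\rangle$, positive-definiteness piecewise, and the invariance identity) agree with the paper.
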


\begin{rmk}\label{Rem:3^2}
\begin{enumerate}[(1)]
\item For $i,j,k,\ell\in\Z$, we have $V^{i,j}=V^{i+3k,j+3\ell}$.
\item Let $i,j,k,\ell\in\Z$.
Since the $V^{0,0}$-module $V^{i,j}$ is irreducible and $V=\bigoplus_{0\le i,j\le 2}V^{i,j}$ is  $\Z_3^2$-graded, we have $\Span_\C\{ u_nv\mid u\in V^{i,j},\ v\in V^{k,\ell},\ n\in\Z\}=V^{i+k,j+\ell}$.
\item By the $\Z_3$-grading of $V$, the contragredient module of $V^i$ is $V^{2i}$.
Let $\langle\cdot,\cdot\rangle$ be the symmetric invariant bilinear form of $V$ such that $\langle \1,\1\rangle=1$.
Then this form gives a natural pairing between $V^1$ and $V^2$.
Moreover, since $V$ is of CFT-type, for any automorphism $h$ of $V$, we have $\langle u,v\rangle=\langle h(u),h(v)\rangle$ for $u,v\in V$.
\item By the $\Z_3^2$-grading of $V$, the contragredient module of $V^{i,j}$ is isomorphic to $V^{2i,2j}$.
\end{enumerate}
\end{rmk}

Let $V$ be a VOA satisfying the assumptions of the theorem above.
Let $(\cdot,\cdot)_{V^0}$ be the positive-definite invariant Hermitian form on $V^0$ normalized so that $(\1,\1)_{V^0}=1$.
Let $\langle\cdot,\cdot\rangle$ be the normalized symmetric invariant bilinear form on $V$ such that $\langle\1,\1\rangle=1$.
Note that $(u,v)_{V^0}=\langle u,\phi(v)\rangle$ for $u,v\in V^{0}$ by Remark \ref{Rem:inv}.
By the assumption (C), $\psi(V^0)=V^{0,0}\oplus V^{1,0}\oplus V^{2,0}$ is also a unitary VOA with the anti-linear automorphism $\psi\phi \psi^{-1}$ and a positive-definite invariant Hermitian form 
\begin{equation}
(a,b)_{\psi(V^0)}=(\psi^{-1}(a),\psi^{-1}(b))_{V^0}\quad \text{for}\quad a,b\in \psi(V^0).\label{Eq:p()}
\end{equation}
Note that $\psi\phi\psi^{-1}=\phi$ on $V^{0,0}$ by the assumption (D) and Proposition \ref{Prop:CKLW} (2).

Let $i\in\{1,2\}$.
By Lemma \ref{uniqueF}, a positive-definite invariant Hermitian form on the unitary $(V^{0,0},\phi)$-module $V^{i,0}$ is unique up to scalar.
We may choose a positive-definite invariant Hermitian form $(\cdot,\cdot)_{V^i}$ on $V^i$ so that 
\begin{equation}
(u,v)_{V^i}=(u,v)_{\psi(V^0)}\quad \text{for}\quad u,v\in V^{i,0}.\label{Eq:choice}
\end{equation}
By Lemma \ref{Rem:specify} and Remark \ref{Rem:3^2} (3), there exists an anti-linear bijective map $\Phi^i:V^i\to V^{2i}$ such that $\Phi^i(a_nv)=\phi(a)_n\Phi^i(v)$ for $a\in V^0$ and $v\in V^i$ and 
\begin{equation}
(u,v)_{V^i}=\langle u,\Phi^i(v)\rangle\quad \text{for}\quad u,v\in V^i.\label{Eq:()<>}
\end{equation}
By \eqref{Eq:p()}, \eqref{Eq:choice} and \eqref{Eq:()<>}, for  any $u,v\in V^{i,0}$, we have 
$$\langle u,\Phi^i(v)\rangle=(u,v)_{V^i}=(\psi^{-1}(u),\psi^{-1}(v))_{V^0}=\langle \psi^{-1}(u),\phi\psi^{-1}(v)\rangle=\langle u,\psi\phi\psi^{-1}(v)\rangle.$$
Hence
\begin{equation}
\psi\phi\psi^{-1}=\Phi^i\quad \text{on}\quad V^{i,0}.\label{Eq:comm}
\end{equation}
Consider $\psi^2$ as an automorphism of $V^0$.
Then $\psi^2$ commutes with $\phi$ by Proposition \ref{Prop:CKLW} (1) and the assumption (D).
Hence, we have
\begin{equation}
\psi\Phi^i\psi^{-1}=\phi\quad \text{on}\quad V^{0,i}.\label{Eq:comm2}
\end{equation}
Since the order of $\phi$ is $2$, both the composition maps $\Phi^{2i}\circ\Phi^i$ and  $\Phi^{i}\circ\Phi^{2i}$ are the identity map on $V^{i,0}$ by \eqref{Eq:comm}.
Viewing $V^{2i}$ as an irreducible unitary $(V^0,\phi)$-module, we have $\Phi^{2i}=(\Phi^i)^{-1}$ on $V^{2i}$ by the same argument as in the proof of Lemma \ref{uniqueF}.

Now, we define the anti-linear map $\Phi:V\to V$ so that 
\begin{align*}
\Phi(u) =\begin{cases} \phi(u)& \text{for}\ u\in V^0\\
\Phi^i(u)& \text{for}\ u\in V^i,\ i=1,2
\end{cases}
\end{align*}
and the positive-definite Hermitian form $(\cdot,\cdot)$ on $V$ by
\begin{align*}
(u,v)=\begin{cases}
(u,v)_{V^i} & \text{ if } u,v\in V^i,\ i=0,1,2,\\
0 & \text{ if } u\in V^i,\ v\in V^j,\ i\neq j.
\end{cases}
\end{align*}
Clearly, $\Phi$ is bijective and $\Phi\circ \Phi$ is the identity of $V$.
Let us show that $(V,\Phi)$ is unitary. 

\begin{lem}\label{Lem:<>()}
\begin{enumerate}[{\rm (1)}]
\item For $i,j\in\{0,1,2\}$, $\Phi(V^{i,j})=V^{2i,2j}$.
\item For $u,v\in V$, $(u,v)=\langle u,\Phi(v)\rangle$.
\end{enumerate}
\end{lem}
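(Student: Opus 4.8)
The plan is to prove both parts by reducing everything to the definitions of $\Phi$ and $(\cdot,\cdot)$ and the properties of the maps $\Phi^i$ established just above the lemma.

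For part (1), I would argue separately in the three cases $i=0$ and $i=1,2$. When $i=0$, the claim is that $\phi(V^{0,j})=V^{0,2j}$. Since $\phi$ is an automorphism of $V^0$ fixing $\omega$, it commutes with $g$ up to the fact that $\phi$ is anti-linear: from $g(v)=\xi^j v$ and anti-linearity we get $g(\phi(v))=\phi(\xi^j v)=\overline{\xi^j}\phi(v)=\xi^{2j}\phi(v)$, provided $\phi$ commutes with $g$ on $V^0$. That commutation follows from Proposition \ref{Prop:CKLW}(1) together with assumption (D), since $g$ restricts to an automorphism of $V^0$ and $\Aut V^0$ is compact. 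Hence $\phi(V^{0,j})\subseteq V^{0,2j}$, and equality holds because $\phi$ is bijective and $\dim V^{0,j}=\dim V^{0,2j}$ (the contragredient of $V^{0,j}$ is $V^{0,2j}$ by Remark \ref{Rem:3^2}(4)). For $i\in\{1,2\}$ the map is $\Phi^i:V^i\to V^{2i}$, which intertwines the $V^0$-action via $\phi$; the same anti-linearity computation, now using that $\Phi^i$ intertwines $g\in\Aut V^0$ via $\phi$ and that $\phi$ commutes with $g$ on $V^{0,0}$, together with \eqref{Eq:comm} identifying $\Phi^i$ with $\psi\phi\psi^{-1}$ on $V^{i,0}$, pins down $\Phi^i(V^{i,j})=V^{2i,2j}$. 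I expect this is the routine part.

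For part (2), I would simply unwind the definitions. If $u\in V^i$ and $v\in V^j$ with $i\neq j$, then $(u,v)=0$ by definition, while $\Phi(v)\in V^{2j}$, and $\langle u,\Phi(v)\rangle=0$ because $\langle\cdot,\cdot\rangle$ pairs $V^a$ with $V^{2a}$ only (Remark \ref{Rem:3^2}(3)), and $2j\neq 2i$ since the grading group has order coprime to $2$; hence both sides vanish. If $u,v$ lie in the same $V^i$, then by definition $(u,v)=(u,v)_{V^i}$, and for $i\in\{1,2\}$ this equals $\langle u,\Phi^i(v)\rangle=\langle u,\Phi(v)\rangle$ by \eqref{Eq:()<>}, while for $i=0$ it equals $\langle u,\phi(v)\rangle=\langle u,\Phi(v)\rangle$ by Remark \ref{Rem:inv} (the normalization $(\1,\1)_{V^0}=1=\langle\1,\1\rangle$ having been fixed).

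The main obstacle, such as it is, is bookkeeping: one must be careful that the anti-linearity of $\phi$ and of each $\Phi^i$ conjugates eigenvalues, so that $V^{i,j}$ is sent to $V^{2i,2j}$ rather than $V^{i,j}$, and one must invoke assumption (D) and Proposition \ref{Prop:CKLW}(1) at exactly the right moment to know that $\phi$ (and $\psi^2$) commutes with the relevant order-$3$ automorphisms on $V^0$ and $V^{0,0}$. Once those commutations are in hand, both parts are immediate from the construction of $\Phi$ and $(\cdot,\cdot)$.
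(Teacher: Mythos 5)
Your part (2) is fine and is essentially the paper's own argument (vanishing of $\langle u,\Phi(v)\rangle$ for $u\in V^i$, $v\in V^j$, $i\neq j$, because $\Phi(v)\in V^{2j}$ and the form only pairs $V^i$ with $V^{2i}$; then \eqref{Eq:()<>} and Remark \ref{Rem:inv} in the diagonal cases). The issue is in part (1), for $i\in\{1,2\}$. Your argument there hinges on the assertion that ``$\Phi^i$ intertwines $g$'', i.e.\ that $\Phi^i\circ g=g\circ\Phi^i$ on $V^i$, so that anti-linearity conjugates the $g$-eigenvalue. But this is not something that has been established, and it does not follow from the only property $\Phi^i$ is defined by, namely $\Phi^i(a_nv)=\phi(a)_n\Phi^i(v)$ for $a\in V^0$: that controls the interaction of $\Phi^i$ with vertex operators from $V^0$, not with the automorphism $g$ of $V$. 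For $j\neq 0$ the identity \eqref{Eq:comm} is of no direct use either, since $\psi\phi\psi^{-1}$ is only defined on $V^{i,0}=\psi(V^{0,i})$ (for $j\neq0$, $\psi^{-1}(V^{i,j})=V^{j,i}$ does not lie in the domain of $\phi$). So as written, the case $i\in\{1,2\}$, $j\neq 0$ --- which is the whole content of (1) beyond the easy $i=0$ case --- is asserted rather than proved. The gap is fillable along your lines: since $g\phi g^{-1}=\phi$ on $V^0$ (Proposition \ref{Prop:CKLW}(1) and (D)), the map $g\Phi^ig^{-1}$ is another anti-linear bijection $V^i\to V^{2i}$ with the same intertwining property, so by the Schur-type uniqueness of Lemma \ref{uniqueF} one gets $g\Phi^ig^{-1}=\lambda\Phi^i$ for a scalar $\lambda$, and evaluating on $V^{i,0}$ (where $g=1$ and $\Phi^i(V^{i,0})=V^{2i,0}$ by \eqref{Eq:comm} and your $i=0$ computation) forces $\lambda=1$. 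That extra step is exactly what is missing.

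It is worth noting that the paper sidesteps all of this: it observes that each $V^{i,j}$ is an irreducible unitary $(V^{0,0},\phi)$-module, so by Lemma \ref{Rem:specify} and Lemma \ref{uniqueF} the restriction of $\Phi$ to $V^{i,j}$ carries it onto the irreducible $V^{0,0}$-submodule of $V^{2i}$ isomorphic to the contragredient of $V^{i,j}$, which is $V^{2i,2j}$ by Remark \ref{Rem:3^2}(4); multiplicity-freeness of the $\Z_3^2$-grading then identifies the image. This avoids any discussion of how $\Phi$ interacts with $g$ and treats $i=0$ and $i\neq0$ uniformly.
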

\begin{proof} Clearly, $(V^{0,0},\phi)$ is a simple unitary VOA and $V^{i,j}$ is an irreducible unitary $(V^{0,0},\phi)$-module.
Hence by Lemma \ref{Rem:specify} (2) and Lemma \ref{uniqueF}, the map $\Phi$ sends $V^{i,j}$ to the submodule of $V^{2i,2j}$ isomorphic to the contragredient module of $V^{i,j}$, which is $V^{2i,2j}$ by Remark \ref{Rem:3^2} (4).
Hence we obtain (1).

Let $u\in V^i$, $v\in V^j$ with $i\neq j$.
Clearly $i+2j\neq0$.
By (1), we have $\Phi(v)\in V^{2j}$.
Since the contragredient module of $V^i$ is not isomorphic to $V^{2j}$, we have $\langle u,\Phi(v)\rangle=0$.
By \eqref{Eq:()<>} and the definition of the form $(\cdot,\cdot)$, we obtain (2).
\end{proof}

\begin{prop}\label{Prop:anti} The anti-linear map $\Phi$ is an anti-linear involution of $V$.
\end{prop}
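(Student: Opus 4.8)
The plan is to show that $\Phi$ is an anti-linear automorphism of $V$, i.e.\ that it fixes the vacuum and the conformal vector and satisfies $\Phi(a_n v) = \Phi(a)_n\Phi(v)$ for all $a,v\in V$ and $n\in\Z$; since we already know $\Phi\circ\Phi=\mathrm{id}_V$, this gives that $\Phi$ is an anti-linear involution. The vacuum and conformal vector lie in $V^{0,0}$, and there $\Phi=\phi$, which is an anti-linear involution of $(V^0,\phi)$; hence $\Phi(\1)=\1$ and $\Phi(\omega)=\omega$. So the real content is the compatibility with the vertex operators.

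\medskip

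To verify $\Phi(a_n v)=\Phi(a)_n\Phi(v)$, I would first reduce to the case where $a\in V^{i,j}$ and $v\in V^{k,\ell}$ are in fixed graded pieces, using anti-linearity and the decomposition $V=\bigoplus_{0\le i,j\le 2}V^{i,j}$. Then $a_nv\in V^{i+k,j+\ell}$ by Remark~\ref{Rem:3^2}(2), and $\Phi(a)_n\Phi(v)\in V^{2i+2k,2j+2\ell}=V^{2(i+k),2(j+\ell)}$ by Lemma~\ref{Lem:<>()}(1), so both sides lie in the same homogeneous component. When $i=j=0$ (so $a\in V^0$), the identity $\Phi(a_nv)=\phi(a)_n\Phi(v)$ is exactly the intertwining property built into each $\Phi^i$ together with the definition of $\Phi$ on $V^0$; the symmetric case $k=\ell=0$ follows from skew-symmetry of vertex operators (expressing $a_nv$ in terms of $v_m a$) combined with the previous case. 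For the remaining cases one uses the invariant bilinear form: by Lemma~\ref{Lem:<>()}(2) it suffices to check that $\langle \Phi(a_nv), w\rangle = \langle \Phi(a)_n\Phi(v), w\rangle$ for all $w$ in the appropriate graded piece, and then one transports the defining invariance property of the Hermitian forms $(\cdot,\cdot)_{V^i}$ through $\langle\cdot,\cdot\rangle$, using the invariance relation $(Y_M(e^{zL(1)}(-z^{-2})^{L(0)}a,z^{-1})u,v)_M=(u,Y_M(\phi(a),z)v)_M$ and the symmetry $\langle u,v\rangle=\langle v,u\rangle$ and $\langle h(u),h(v)\rangle=\langle u,v\rangle$ for automorphisms $h$.

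\medskip

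The key technical point—and what I expect to be the main obstacle—is that $\Phi$ was patched together from three separate forms $(\cdot,\cdot)_{V^0},(\cdot,\cdot)_{V^1},(\cdot,\cdot)_{V^2}$, normalized only on the $V^{i,0}$ pieces via $\psi$, so one must ensure that $\Phi$ really is intertwining for \emph{all} of $V^0$ acting on \emph{all} of $V^i$, not just for the sub-pieces where the normalization was imposed. This is handled by the relations \eqref{Eq:comm}, \eqref{Eq:comm2} together with the fact that $\Phi^{2i}=(\Phi^i)^{-1}$, which say precisely that $\Phi$ agrees with the conjugated involution $\psi\phi\psi^{-1}$ on the relevant slices and hence is globally consistent; the uniqueness of invariant sesquilinear forms on irreducible modules (Lemma~\ref{uniqueF}) is what forces the three pieces to glue coherently. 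Once the products $V^{i,j}\cdot V^{k,\ell}$ with $(i,j),(k,\ell)$ both nonzero are checked—this is where one genuinely needs the $\Z_3^2$-grading and the irreducibility of each $V^{i,j}$ as a $V^{0,0}$-module, to reduce the verification of the intertwining identity to a single computation per pair of components—the proof concludes that $\Phi$ is an anti-linear automorphism of order $2$, i.e.\ an anti-linear involution of $V$.
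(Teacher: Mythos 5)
Your handling of the easy cases matches the paper: $\Phi(\1)=\1$ and $\Phi(\omega)=\omega$ since $\Phi=\phi$ on $V^0$; the identity $\Phi(a_nv)=\Phi(a)_n\Phi(v)$ for $a\in V^0$ and arbitrary $v$ comes from the unitarity of $(V^0,\phi)$ together with the defining intertwining property of $\Phi^i$; and the case $v\in V^0$, $a\in V^i$ follows by skew symmetry. The problem is the remaining case $a\in V^i$, $v\in V^k$ with $i,k\neq 0$, which is the actual content of the proposition, and there your plan does not go through as stated. You propose to reduce to $\langle \Phi(a_nv),w\rangle=\langle\Phi(a)_n\Phi(v),w\rangle$ and then ``transport'' the invariance relation \eqref{Eq:Inv} across $\langle\cdot,\cdot\rangle$. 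But \eqref{Eq:Inv} is an adjointness property of the module vertex operators $Y_M(a,z)$ for $a$ in the unitary VOA, i.e.\ only for $a\in V^0$. For $a\in V^i$ with $i\neq 0$, the modes $a_n\colon V^k\to V^{i+k}$ come from intertwining operators among distinct irreducible $V^0$-modules, and no invariance or adjointness property for these has been assumed or established at this point; attempting to move $a_n$ to the other side of the form just reproduces the statement being proved. So the central case is left without an argument.

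The paper closes this case by a different device: by Remark~\ref{Rem:3^2}(2), $V^{i,j}$ is spanned by vectors $x_ry$ with $x\in V^{0,j}$ and $y\in V^{i,0}$, and Borcherds' identity rewrites $(x_ry)_qu$ as a sum of compositions $x_{r-s}(y_{q+s}u)$ and $y_{q+r-s}(x_su)$, in which every mode that appears is a mode of an element of $V^0$ or of $V^{i,0}\subset\psi(V^0)$ --- precisely the slices on which the intertwining identity is already available thanks to the normalizations \eqref{Eq:comm} and \eqref{Eq:comm2}. Your closing paragraph correctly identifies the role of \eqref{Eq:comm}, \eqref{Eq:comm2}, $\Phi^{2i}=(\Phi^i)^{-1}$ and Lemma~\ref{uniqueF} in making the three pieces of $\Phi$ consistent, but without the spanning-plus-Borcherds step there is no mechanism to convert that consistency into the identity when both arguments lie outside $V^0$. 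As written, the proposal has a genuine gap at its key step.
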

\begin{proof} Since $\phi$ is an anti-linear automorphism of $V^0$, $\Phi$ fixes the vacuum vector and the conformal vector of $V$.
Since $(V^0,\phi)$ is unitary, the equation 
\begin{equation}
\Phi(u_nv)=\Phi(u)_n\Phi(v)\label{Eq:anti}
\end{equation} holds for $u,v\in V^0$ and $n\in\Z$.
By the definition of $\Phi^i$ for $i=1,2$, \eqref{Eq:anti} holds for $u\in V^0$ and $v\in V^i$.
By the skew symmetry, we have 
\[
u_{n}v = (-1)^{n+1} v_{n}u +\sum_{i\geq 1} \frac{(-1)^{n+i+1}}{i!} L(-1)^i (v_{n+1} u)
\]
for $u,v\in V$ and $n\in\Z$.
Hence the equation \eqref{Eq:anti} also holds for $u\in V^i$ and $v\in V^0$.

Let $x\in V^{0,j}$, $y\in V^{i,0}$ and $u\in V^{k,\ell}$.
By Borcherds' identity, for $r,q\in\Z$,
$$(x_{r}y)_{q}u=\sum_{i=0}^\infty(-1)^i\binom{r}{i}\left(x_{r-i}(y_{q+i}u)-(-1)^ry_{q+r-i}(x_{i}u)\right).$$
By the assumptions on $x$ and $y$ and the identity above, 
we have $$\Phi((x_{r}y)_{q}u)=(\Phi(x)_{r}\Phi(y))_{q}\Phi(u)=\Phi(x_{r}y)_{q}\Phi(u).$$
By Remark \ref{Rem:3^2} (2), we obtain $\Phi(u_{n}v)=\Phi(u)_{n}\Phi(v)$ for all $x,y\in V$ and $n\in\Z$.
\end{proof}

By Lemma \ref{Lem:<>()} (2), the invariant property of $\langle\cdot,\cdot\rangle$ and Proposition \ref{Prop:anti}, we obtain the following proposition:

\begin{prop}\label{Prop:Unitary}
The positive-definite Hermitian form $(\ , \ )$ on $V$ satisfies the invariant property for $(V,\Phi)$.
\end{prop}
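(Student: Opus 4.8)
The plan is to assemble three ingredients that have already been established: the identification $(u,v)=\langle u,\Phi(v)\rangle$ for all $u,v\in V$ (Lemma \ref{Lem:<>()} (2)), the fact that $\Phi$ is an anti-linear involution satisfying $\Phi(u_nv)=\Phi(u)_n\Phi(v)$ (Proposition \ref{Prop:anti}), and the invariance of the symmetric bilinear form $\langle\cdot,\cdot\rangle$ on $V$. Concretely, the goal is to verify
\[
(Y(e^{zL(1)}(-z^{-2})^{L(0)}a,z^{-1})u,v)=(u,Y(\Phi(a),z)v)\qquad\text{for all }a,u,v\in V.
\]
First I would rewrite the left-hand side using Lemma \ref{Lem:<>()} (2) as $\langle Y(e^{zL(1)}(-z^{-2})^{L(0)}a,z^{-1})u,\Phi(v)\rangle$, and the right-hand side as $\langle u,\Phi(Y(\Phi(a),z)v)\rangle$. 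By Proposition \ref{Prop:anti}, $\Phi$ is an anti-linear automorphism, so $\Phi(Y(\Phi(a),z)v)=Y(\Phi\Phi(a),\bar z)\Phi(v)$; since $\Phi$ is an involution and the coefficients of $Y$ are fixed by $\Phi$ on modes, this becomes $Y(a,z)\Phi(v)$ after accounting for the anti-linearity in the formal variable (the substitution $z\mapsto\bar z$ is harmless because we compare coefficients of each power of $z$ separately). Thus the identity to prove reduces to
\[
\langle Y(e^{zL(1)}(-z^{-2})^{L(0)}a,z^{-1})u,\Phi(v)\rangle=\langle u,Y(a,z)\Phi(v)\rangle,
\]
which is exactly the defining invariance property of the symmetric bilinear form $\langle\cdot,\cdot\rangle$ on $V$ applied to the triple $(a,u,\Phi(v))$ (see \cite{FHL,L94}).

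The steps, in order, are: (1) replace both Hermitian pairings by $\langle\cdot,\Phi(\cdot)\rangle$ via Lemma \ref{Lem:<>()} (2); (2) move $\Phi$ across $Y(\Phi(a),z)v$ using the homomorphism property $\Phi(u_nv)=\Phi(u)_n\Phi(v)$ from Proposition \ref{Prop:anti}, together with $\Phi\circ\Phi=\mathrm{id}$ and $\Phi(\omega)=\omega$ (so $L(n)$ commutes appropriately with $\Phi$ up to complex conjugation of scalars); (3) recognize the resulting identity as the standard invariance of $\langle\cdot,\cdot\rangle$. Because $V$ is simple and of CFT-type, the invariant symmetric bilinear form exists and is unique up to scalar, and the normalization $\langle\1,\1\rangle=1$ makes it compatible with the restriction of $(\cdot,\cdot)$ to $V^0$, which is the normalization already fixed; this consistency is what guarantees that no stray scalar appears in step (3).

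The only genuinely delicate point is the bookkeeping of anti-linearity when conjugating the vertex operator $Y(\Phi(a),z)$ by $\Phi$: one must check that $\Phi\,a_n=\Phi(a)_n\,\Phi$ as operators (which is Proposition \ref{Prop:anti} read modewise) does not introduce a conjugation on $z$ that breaks the formal-variable identity. I expect this to be routine: since we match coefficients of $z^{-n-2}$ on both sides, the formal variable is merely an indexing device and the anti-linearity only conjugates the (real, by $\Phi(\omega)=\omega$) structure constants entering $e^{zL(1)}(-z^{-2})^{L(0)}$, leaving them fixed. Hence the main obstacle is cosmetic rather than substantive, and the proposition follows by directly combining Lemma \ref{Lem:<>()}, Proposition \ref{Prop:anti}, and the invariance of $\langle\cdot,\cdot\rangle$.
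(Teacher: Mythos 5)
Your proposal is correct and follows exactly the route the paper takes: the paper's own justification is the one-line remark that the proposition follows from Lemma \ref{Lem:<>()} (2), the invariance of $\langle\cdot,\cdot\rangle$, and Proposition \ref{Prop:anti}, and your write-up simply spells out that computation (correctly handling the anti-linearity by comparing coefficients of powers of $z$). No gaps.
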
 



Combining Propositions \ref{Prop:anti} and \ref{Prop:Unitary}, we have proved Theorem \ref{Thm:uni}.

\section{Unitary form on $V^\sharp$}\label{sec:4}

In this section,  we will show that $V^\sharp$ is unitary. 

\subsection{Unitary form on $V_L^\tau$}

First, we recall from \cite{Dlin} a unitary form for a general lattice VOA $V_L$.

Let $L$ be a positive-definite even lattice. Let $V_L = M(1)\otimes_\C \C\{L\}$ be the lattice VOA as defined in \cite{FLM}.
There exists a positive-definite Hermitian form on $\C\{L\}=\Span_{\C}\{e^\alpha\mid \alpha\in L\}$ determined by $(e^\alpha, e^\beta)=\delta_{\alpha, \beta}$.
In addition, there exists a positive-definite Hermitian form on $M(1)=\Span_{\C}\{\alpha_1(-n_1)\dots\alpha_k(-n_k)\1\mid \alpha_i\in L,\ n_i\in\Z_{>0}\}$ such that 
\begin{align*}
(\1,\1)&=1;\\
(\alpha(n) u, v)&= (u, \alpha(-n) v), \quad \text{ for }\alpha \in L
\end{align*}
for any $u,v\in M(1)$.
Then a positive-definite Hermitian form on $V_{L}$ is defined by $
(u\otimes e^\alpha, v\otimes e^\beta)= (u,v)\cdot (e^\alpha, e^\beta)$, where $u,v\in M(1)$ and $\alpha,\beta\in L$.

Let $\phi: V_L\to V_L$ be an anti-linear map determined by:
\[
\alpha_1(-n_1)\cdots \alpha_k(-n_k)\otimes e^\alpha \mapsto (-1)^k \alpha_1(-n_1)\cdots \alpha_k(-n_k)\otimes e^{-\alpha},
\]
where $\alpha_1, \dots, \alpha_k, \alpha\in L$. 

\begin{thm}{\rm (\cite[Theorem 4.12]{Dlin})} Let $L$ be a positive-definite even lattice and $\phi$ be the anti-linear
map of $V_L$ defined above. Then the lattice vertex operator algebra $(V_L , \phi)$ is
a unitary vertex operator algebra.
\end{thm}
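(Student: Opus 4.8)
The plan is to verify the three conditions in the definition of a unitary VOA for the pair $(V_L,\phi)$: positive-definiteness of $(\cdot,\cdot)$, that $\phi$ is an anti-linear involution, and the invariance identity. Positivity is immediate from the construction, since $(\cdot,\cdot)$ is the tensor product of the positive-definite form on $\C\{L\}$, for which $\{e^\alpha\mid\alpha\in L\}$ is orthonormal, and the positive-definite form on $M(1)$, for which the PBW monomials $\beta_{i_1}(-n_1)\cdots\beta_{i_k}(-n_k)\1$ built from an orthonormal basis $\{\beta_i\}$ of $\R\otimes_\Z L$ are orthogonal with positive norms.

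Next I would check that $\phi$ is an anti-linear involution. That $\phi^2=\mathrm{id}_{V_L}$ and $\phi(\1)=\1$ are clear from the defining formula, and $\phi(\omega)=\omega$ follows from $\omega=\frac12\sum_i\beta_i(-1)\beta_i(-1)\1$ and the sign $(-1)^2=1$. The substantive point is $\phi(u_nv)=\phi(u)_n\phi(v)$, which by a standard generation argument for lattice VOAs reduces to checking it for $u=\beta(-1)\1$ with $\beta\in\R\otimes_\Z L$ and for $u=e^\alpha$ with $\alpha\in L$. On the Heisenberg part this amounts to the operator relation $\phi\,\beta(n)=-\beta(n)\,\phi$, visible from the formula for $\phi$. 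On the lattice part one uses $\phi\,e^\alpha=e^{-\alpha}\,\phi$, $\phi\,z^{\alpha}=z^{-\alpha}\,\phi$, and the commutation of $\phi$ with the exponential factors $E^\pm(\alpha,z)$ of $Y(e^\alpha,z)$; the one delicate ingredient is that $\alpha\mapsto-\alpha$ must lift to an automorphism of the central extension $\hat L$ of $L$, which holds once the $2$-cocycle $\epsilon$ is chosen with $\epsilon(-\alpha,-\beta)=\epsilon(\alpha,\beta)$ (for instance by taking $\epsilon$ trivial on $2L\times L$). This is where the bookkeeping of signs is concentrated.

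For the invariance identity I would pass through the standard symmetric invariant bilinear form $\langle\cdot,\cdot\rangle$ on $V_L$ of \cite{FLM}, normalized by $\langle\1,\1\rangle=1$; it is non-degenerate since $V_L$ is simple and self-dual, and it satisfies $\langle Y(e^{zL(1)}(-z^{-2})^{L(0)}a,z^{-1})u,w\rangle=\langle u,Y(a,z)w\rangle$. The key intermediate claim is that $(u,v)=\langle u,\phi(v)\rangle$ for all $u,v\in V_L$; this I would verify on the monomial basis by comparing the explicit formulas for $\langle\cdot,\cdot\rangle$, for $(\cdot,\cdot)$, and for $\phi$ — the sign $(-1)^k$ in $\phi$ exactly compensating the signs produced by $\langle\alpha(-n)\,\cdot\,,\cdot\rangle=-\langle\,\cdot\,,\alpha(n)\,\cdot\,\rangle$, and $e^\alpha\mapsto e^{-\alpha}$ matching $\langle e^\alpha,e^{\beta}\rangle$ against $(e^\alpha,e^\beta)=\delta_{\alpha,\beta}$ once $\epsilon$ is normalized as above. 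Granting this and that $\phi$ is an anti-linear automorphism, the invariance identity for $(\cdot,\cdot)$ follows formally:
\[
(Y(e^{zL(1)}(-z^{-2})^{L(0)}a,z^{-1})u,v)=\langle Y(e^{zL(1)}(-z^{-2})^{L(0)}a,z^{-1})u,\phi(v)\rangle=\langle u,Y(a,z)\phi(v)\rangle,
\]
while $(u,Y(\phi(a),z)v)=\langle u,\phi(Y(\phi(a),z)v)\rangle=\langle u,Y(\phi^2(a),z)\phi(v)\rangle=\langle u,Y(a,z)\phi(v)\rangle$, so the two sides coincide.

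I expect the main obstacle to be the second step, namely verifying $\phi(u_nv)=\phi(u)_n\phi(v)$ on the lattice part: one must fix a compatible lift of $\alpha\mapsto-\alpha$ to $\hat L$ and track the signs coming from $\epsilon$, from the $(-1)^k$ in $\phi$, and from $z^{\pm\alpha}$ and $E^\pm(\alpha,z)$, so that they cancel correctly. The Heisenberg computation, the identity $(u,v)=\langle u,\phi(v)\rangle$, and the positivity are comparatively routine.
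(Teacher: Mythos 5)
The paper does not prove this statement: it is quoted verbatim from Dong--Lin \cite[Theorem 4.12]{Dlin}, so there is no internal proof to compare against. Your outline is a sound reconstruction and correct in its essentials, but it takes a different route from the proof in \cite{Dlin} and from the closely parallel argument the paper does carry out, namely the proof in Section 5.2 that $V_L^{T_\chi}(\tau)$ is a unitary $\tau$-twisted module: there the invariance identity is verified directly on the generators $\alpha(-1)\1$ and $e^\alpha$ by explicit computation with the factors $E^\pm(\alpha,z)$ and the action of $(-z^{-2})^{L(0)}$, with no detour through the symmetric bilinear form. Your route instead establishes $(u,v)=\langle u,\phi(v)\rangle$ once and for all and then derives invariance of the Hermitian form formally from the invariance of $\langle\cdot,\cdot\rangle$ together with the anti-automorphism property of $\phi$. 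Both work. Your version concentrates the sign bookkeeping in two places: the lift of $\alpha\mapsto-\alpha$ to the central extension $\hat L$, and the identity $\langle e^\alpha,\phi(e^\alpha)\rangle=(-1)^{\langle\alpha,\alpha\rangle/2}\epsilon(\alpha,-\alpha)=1$, which does come out to $1$ for the standard bimultiplicative cocycle on an even lattice but must actually be checked, since a different normalization of $\epsilon$ would break the identity $(u,v)=\langle u,\phi(v)\rangle$ on which everything else rests. The direct computation spreads the same signs across the generator-by-generator verification. One small caution: your formal derivation of the invariance identity uses $\phi(Y(b,z)v)=Y(\phi(b),z)\phi(v)$ in full, so the second step of your outline (that $\phi$ is an anti-linear automorphism, not merely an anti-linear involution fixing $\1$ and $\omega$) is genuinely load-bearing and cannot be deferred.
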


Now, we assume that $L$ has a fixed-point free isometry $\tau$ of order $3$.
We often denote a lift of $\tau$ on $V_L$ by $\tau$, also.
%
%
%
One can directly check that $\tau$ commutes with $\phi$.
Hence by \cite[Corollary 2.7]{Dlin}, we obtain the following lemma:

\begin{lem}\label{UVLtau}
Let $V_L^\tau$ be the fixed-point subspace of $\tau$ on $V_L$. Then 
we have $\phi(V_L^\tau) \subset V_L^\tau$. In particular, $(V_L^\tau, \phi)$ is a unitary VOA. 
\end{lem}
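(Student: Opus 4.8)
The plan is to verify the two hypotheses of the cited result \cite[Corollary 2.7]{Dlin}, namely that $\phi$ is an anti-linear automorphism of $V_L$ and that $\phi$ commutes with the automorphism $\tau$; once these are in place the conclusion that $\phi$ restricts to an anti-linear involution of the fixed-point subVOA $V_L^\tau$ and that $(V_L^\tau,\phi)$ is unitary follows immediately, since the restriction of the positive-definite invariant Hermitian form on $V_L$ to $V_L^\tau$ is still positive-definite and still satisfies the invariance property. The first hypothesis is exactly the content of \cite[Theorem 4.12]{Dlin}, which we have just quoted, so nothing new is needed there.

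The substantive point is therefore the claim that $\tau$ commutes with $\phi$, which I would establish by a direct computation on the standard spanning set of $V_L$. Write a general homogeneous element as $\alpha_1(-n_1)\cdots\alpha_k(-n_k)\otimes e^\alpha$ with $\alpha_i,\alpha\in L$ and $n_i>0$. A lift $\tau$ of the lattice isometry acts by $\alpha_i(-n_i)\mapsto(\tau\alpha_i)(-n_i)$ on the Heisenberg part and by $e^\alpha\mapsto\epsilon(\alpha)\,e^{\tau\alpha}$ on $\C\{L\}$ for a suitable function $\epsilon$ taking values in cube roots of unity coming from the choice of lift (the section of the central extension $\hat L$). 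Applying $\phi$ first then $\tau$, and $\tau$ first then $\phi$, both yield $(-1)^k(\tau\alpha_1)(-n_1)\cdots(\tau\alpha_k)(-n_k)\otimes$ (a scalar multiple of) $e^{-\tau\alpha}$; the two scalars agree because $\phi$ sends $e^\beta\mapsto e^{-\beta}$ with scalar $1$ and because $\tau$ applied to $e^{-\alpha}$ and to $e^{\tau\alpha}$ produces the same twisting scalar $\epsilon(\alpha)$ — here one uses that $\tau$ has order $3$ (odd), so $\epsilon$ is consistent under $\alpha\mapsto-\alpha$; more precisely $\epsilon(-\alpha)=\epsilon(\alpha)^{-1}$ and since we only need the lift to have order $3$ the relevant compatibility $\tau(\phi(e^\alpha))=\phi(\tau(e^\alpha))$ reduces to a statement in $\hat L$ that holds because squaring (or cubing) is an automorphism of the relevant abelian $3$-group of commutator values. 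I would state this as ``one can directly check that $\tau$ commutes with $\phi$,'' carrying out the verification on the spanning set and invoking the explicit formulas for $\phi$ from the preceding theorem and for the lift $\tau$ from \cite{FLM, Lep85, Dl96}.

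The main obstacle, such as it is, is purely bookkeeping: making sure the cocycle/twisting scalars attached to the chosen lift $\tau$ of the isometry interact correctly with the sign $(-1)^k$ and the inversion $\alpha\mapsto-\alpha$ in the definition of $\phi$. Because $\tau$ has odd order there is no $2$-torsion obstruction, and the standard lift can be chosen compatibly with the $\phi$-action; I expect the verification to be short once the normalizations are fixed. Having checked this, Lemma \ref{UVLtau} follows from \cite[Corollary 2.7]{Dlin}, and this supplies the unitary $V^0$-structure needed later when we apply Theorem \ref{Thm:uni} to $V^\sharp$ with $V^0=V_\Lambda^\tau$ and $V^{0,0}=V_L^\tau$.
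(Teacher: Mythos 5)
Your proposal takes essentially the same route as the paper: the paper likewise reduces the lemma to the single observation that $\tau$ commutes with $\phi$ (which it states as ``one can directly check'') and then invokes \cite[Corollary 2.7]{Dlin}. Your expanded verification of the commutation on the spanning set $\alpha_1(-n_1)\cdots\alpha_k(-n_k)\otimes e^{\alpha}$, including the bookkeeping of the lift's twisting scalars against the sign $(-1)^k$ and the inversion $\alpha\mapsto-\alpha$, is exactly the computation the paper leaves implicit, so the two arguments agree.
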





\subsection{Unitary form on $\tau^i$-twisted $V_L$-modules} \label{sec:5.2}
Let $L$ be an even positive-definite lattice with a $\Z$-bilinear form $\la\cdot|\cdot \ra$.
Assume that $L$ has a fixed-point free isometry $\tau$ of order $3$.
Next we review the construction of $\tau^i$-twisted $V_L$-modules from \cite{Dl96,TY13}. 

Define $\mathfrak{h}=\C\otimes_{\Z}L$ and
extend the $\Z$-form $\la\cdot|\cdot \ra $ $\C$-linearly to $\h .$
Denote
$$\h_{(n)}=\{ \alpha\in
\h\,|\, \tau\alpha= \xi^n \alpha \} \quad \text{for } n\in \Z, $$ where
$\xi=\exp({2\pi \sqrt{-1}/3})$.

Let $\hat{\h}[\tau]=\coprod_{n\in \Z}\h_{(n)}\otimes t^{n/3}\oplus \C c$ be 
the  $\tau$-twisted affine Lie algebra of $\h$. 
Denote
\[
\hat{\h}[\tau]^+=\coprod_{n>0}\h_{(n)}\otimes t^{n/3},\quad
\hat{\h}[\tau]^-=\coprod_{n<0}\h_{(n)}\otimes t^{n/3},\quad
\text{and} \quad \hat{\h}[\tau]^0 =\h_{(0)}\otimes t^{0}\oplus \C c,
\]
and form an induced module
\[
S[\tau]=U(\hat{\h}[\tau])\otimes_{U(\hat{\h}[\tau]^+\oplus \hat{\h}[\tau]^0)}
\C \cong S(\hat{\h}[\tau]^-) \quad \text{(linearly),}
\]
where $\coprod_{n>0}\h_{(n)}\otimes t^{n/3}$ acts trivially on
$\C$ and $c$ acts as $1$, and $U(\cdot)$ and $S(\cdot)$ denote the
universal enveloping algebra and symmetric algebra, respectively. For any $\alpha\in L$ and $n\in \frac{1}3 \Z$, let $\alpha_{(3n)}$ be the natural projection of $\alpha$ in  $\h_{(3n)}$ and we denote $\alpha(n) = \alpha_{(3n)}\otimes t^{n}$.

For any positive integer $n$, let $\la \kappa_n\ra $ be a cyclic group of order $n$.  Define $c^\tau:L\times L
\to \la \kappa_6\ra $ by $
c^\tau(\alpha,\beta)=\kappa_3^{\la \tau \alpha|\beta\ra- \la \tau^2 \alpha|\beta\ra}$ and consider the central extension
\[
1\ \longrightarrow\  \la \kappa_6\ra \ \longrightarrow\  \hat{L}_\tau \
\bar{\longrightarrow\ } L \longrightarrow\  1
\]
such that
$ aba^{-1}b^{-1}=c^\tau(\bar{a},\bar{b})$ 
for  $a,b\in \hat{L}_\tau $, where $\kappa_3=\kappa_6^2$.
Let $\hat{\tau}$ be a lift of $\tau$ in $\Aut \hat{L}_\tau$ such that $\hat{\tau}(a)=a$ for $a\in \hat{L}_\tau$ with $\tau(\bar{a})=\bar{a}$.  
By abuse of notation, we often denote $\hat{\tau}$ by $\tau$. 

\begin{rmk}\label{Rem:Setid}
By \cite[Remark 2.1]{Dl96}, there is a set-theoretic identification between the central extensions $\hat{L}$ and $\hat{L}_\tau$ such
that the respective group multiplications $\times$ and $\times_\tau$
are related by
\begin{equation*}
a\times b= \kappa_6^{\la\tau\bar{a}| \bar{b}\ra} a\times_\tau b.
\end{equation*}
\end{rmk}

Let $T$ be an irreducible $\hat{L}_\tau$-module on which  $K=\{a^{-1}\hat{\tau}(a)\mid a\in
\hat{L}_\tau\}$ acts trivially and $\kappa_6$ acts as
multiplication by $\exp(2\pi\sqrt{-1}/6)$ (cf. \cite[Remark 4.2]{Dl96}). Then the twisted space $V_L^T(\tau )=S[\tau ]\otimes T$
forms an irreducible $\tau$-twisted $V_L$-module with the vertex operator $Y^\tau(\cdot, z): V_L \to
\mathrm{End}(V_L^T) [[z,z^{-1}]]$ on $V_L^T$ defined as follows: 
For
$a\in\hat{L}$, define
\[
W^\tau(a,z)=3^{-\la\bar{a}|\bar{a}\ra/2} \sigma(\bar{a})
E^-(-\bar{a},z)E^{+}(-\bar{a},z) a z^{-\la\bar{a}|\bar{a}\ra/2},
\] 
where
\[ 
E^{\pm}(\alpha,z)=\exp\left(\sum_{n\in \frac{1}3\mathbb{Z}^{\pm}}
\frac{\alpha(n)}{n}z^{-n}\right)
\quad \text{ and } \quad \sigma(\alpha)= (1-{\xi}^2)^{\langle \tau \alpha|\alpha\rangle}.
\]
Note that $a\in\hat{L}$ acts on $T$ as the element of $\hat{L}_\tau$ via the identification in Remark \ref{Rem:Setid}.

For $\alpha_1,\dots,\alpha_k \in\mathfrak{ h}$, $n_1,\dots,n_k>0$,
and $v=\alpha_1(-n_1)\cdots\alpha_k(-n_k)\cdot\iota(a)\in V_L,$
set
\begin{equation*}
W(v,z)=\nop 
\left(\frac{1}{(n_1-1)!}\left(\frac{d}{dz}\right)^{n_1-1}
\alpha_1(z)\right)\cdot\cdot\cdot\left(\frac{1}{(n_k-1)!}
\left(\frac{d}{dz}\right)^{n_k-1} \alpha_k(z)\right)W^{\tau}(a,z)
\nop,
\end{equation*}
where $\alpha(z)=\sum_{n\in \Z/3} \alpha(n)z^{-n-1}$ and $\nop\cdots \nop$ denotes the normal ordered product.

Define constants $c_{mn}^i\in\mathbb{ C}$ for $m, n\ge 0$ and
$i=0,\cdots, 2$ by the formulas
\begin{gather}\label{cmn}
\sum_{m,n\ge 0}c_{mn}^0x^my^n=-\frac{1}{2}\sum_{r=1}^{2}{\rm
log}\left(\frac
{(1+x)^{1/3}-\xi^{-r}(1+y)^{1/3}}{1-\xi^{-r}}\right),\\
\sum_{m,n\ge 0}c_{mn}^ix^my^n=\frac{1}{2}{\rm log}\left( \frac
{(1+x)^{1/3}-\xi^{-i}(1+y)^{1/3}}{1-\xi^{-i}}\right)\ \text{
for}\ \ i\ne0.
\end{gather}

Let $\{\beta_1,\cdot\cdot \cdot, \beta_d\}$ be an orthonormal
basis of $\mathfrak{h}$ and set
\begin{equation}
\Delta_z=\sum_{m,n\ge 0}\displaystyle{\sum^{2}_{i=0}}\
\displaystyle{ \sum^d_{j=1}}
c_{mn}^i(\tau^{-i}\beta_j)(m)\beta_j(n)z^{-m-n}.
\end{equation}
Then $e^{\Delta_z}$ is well-defined on $V_L$ since $c_{00}^i=0$
for all $i$, and for $v\in V_L,$ $e^{\Delta_z}v\in V_L[z^{-1}].$
Note that $\Delta_z$ is independent of the choice of orthonormal
basis and
\begin{equation*} \hat{\tau}\Delta_z=\Delta_z\hat{\tau} \qquad
\text{and} \qquad \hat{\tau}e^{\Delta_z}=e^{\Delta_z}\hat{\tau}\quad
\text{ on } V_L.
\end{equation*}
For $v\in V_L,$  the vertex operator $Y^{\tau}(v,z)$ is defined by
\begin{equation}\label{dyg}
Y^{\tau}(v,z)=W(e^{\Delta_z}v,z).
\end{equation}

Recall that the irreducible  $\hat{L}_\tau$-module can be constructed as follows: 
Let $\mathcal{A}> K$ be a maximal abelian subgroup of $\hat{L}_\tau$. 
Let $\chi: \mathcal{A}/K\to \C$ be a linear character of $\mathcal{A}/K$. Let $\C_\chi$ be the $1$-dimensional module  of $\mathcal{A}$ affording $\chi$. Then we obtain the irreducible $\hat{L}_\tau$-module
\[
T_\chi=\mathrm{Ind}_\mathcal{A}^{\hat{L}_\tau} \C_\chi = \C[\hat{L}_\tau]\otimes_{\C[\mathcal{A}]} \C_\chi.
\]

Next we will define a Hermitian form on $V_L^{T_\chi}(\tau)$.  For any $a, b\in \hat{L}_\tau$, define 
\begin{equation}\label{ta}
(t(a), t(b)) = 
\begin{cases}
0 & \text{ if }  a\mathcal{A} \neq b\mathcal{A}, \\
\chi(b^{-1}a) & \text{ if } a\mathcal{A} = b\mathcal{A},
\end{cases}
\end{equation}
where $t(a) = a \otimes 1 \in T_\chi$ for $a\in \hat{L}$. Also using the same argument as in \cite{FLM,KR}, there
is a positive-definite Hermitian form $(\ ,\ )$ on $S[\tau]$ such that
\[
\begin{split}
(1, 1) &= 1,\\
(\alpha(n) \cdot  u, v) & = (u, \alpha(-n)\cdot v),
\end{split}
\]
for any $u, v \in S[\tau]$ and $\alpha\in L$. Now we define a positive-definite
Hermitian form on $V_L^{T_\chi}(\tau)$ by 
\[
(u\otimes r, v\otimes s)= (u,v)\cdot (r,s), \quad \text{ where } u,v\in S[\tau], r,s\in T_\chi.
\] 

\begin{lem}\label{ealpha}
For any $u,v\in V_L^{T_\chi}(\tau)$ and $a\in \hat{L}_\tau$, we have $(a\cdot u, a\cdot v)= (u,v)$.
\end{lem}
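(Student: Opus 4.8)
For any $u,v\in V_L^{T_\chi}(\tau)$ and $a\in\hat{L}_\tau$, we have $(a\cdot u,a\cdot v)=(u,v)$.

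The plan is to reduce everything to the module $T_\chi$, since $a\in\hat{L}_\tau$ acts only on the tensor factor $T_\chi$ in $V_L^{T_\chi}(\tau)=S[\tau]\otimes T_\chi$ (the action of $\hat L_\tau$ fixes the Heisenberg part $S[\tau]$). By the factorization $(u\otimes r,v\otimes s)=(u,v)\cdot(r,s)$ and bilinearity, it suffices to prove $(a\cdot r,a\cdot s)=(r,s)$ for all $r,s\in T_\chi$. First I would reduce to the case $r=t(b)$, $s=t(c)$ for $b,c\in\hat{L}_\tau$, since such elements span $T_\chi$; then $a\cdot t(b)=t(ab)$ and $a\cdot t(c)=t(ac)$, so the claim becomes $(t(ab),t(ac))=(t(b),t(c))$.

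Now the proof is a direct check against the defining formula \eqref{ta}. The key observation is that the condition ``$ab\mathcal A=ac\mathcal A$'' is equivalent to ``$b\mathcal A=c\mathcal A$'', because multiplication by $a$ on the left is a bijection of $\hat{L}_\tau$ permuting the left cosets of $\mathcal A$; hence the two cases (orthogonal vs.\ not) in \eqref{ta} match up for the pairs $(ab,ac)$ and $(b,c)$. In the non-orthogonal case $b\mathcal A=c\mathcal A$ we must compare $\chi((ac)^{-1}(ab))$ with $\chi(c^{-1}b)$; but $(ac)^{-1}(ab)=c^{-1}a^{-1}ab=c^{-1}b$, so the two values of $\chi$ are literally equal. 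In the orthogonal case both pairings are $0$. This establishes $(a\cdot t(b),a\cdot t(c))=(t(b),t(c))$, and by sesquilinearity and the tensor factorization the full statement $(a\cdot u,a\cdot v)=(u,v)$ follows.

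There is essentially no obstacle here: the only point requiring a moment's care is checking that the argument $c^{-1}a^{-1}ab$ really collapses to $c^{-1}b$ inside the (nonabelian) group $\hat{L}_\tau$ — this is just associativity and the definition of inverse, not commutativity, so it is fine — and that $t(b)$ with $b$ ranging over $\hat{L}_\tau$ spans $T_\chi=\mathrm{Ind}_{\mathcal A}^{\hat{L}_\tau}\C_\chi$, which is immediate from the construction of the induced module as $\C[\hat{L}_\tau]\otimes_{\C[\mathcal A]}\C_\chi$. I would present the argument in the order: (i) reduce to $T_\chi$ via the tensor factorization; (ii) reduce to spanning vectors $t(b),t(c)$; (iii) invoke the coset bijection $x\mapsto ax$ to align the cases of \eqref{ta}; (iv) compute $(ac)^{-1}(ab)=c^{-1}b$ to conclude equality of the $\chi$-values.
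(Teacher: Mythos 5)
Your proposal is correct and follows essentially the same route as the paper's proof: reduce to spanning vectors $v_i\otimes t(b_i)$, use the tensor factorization of the form, and observe that $(ab_2)^{-1}ab_1=b_2^{-1}b_1$ so the two cases of \eqref{ta} and the $\chi$-values match. Your explicit remark that left multiplication by $a$ permutes the left cosets of $\mathcal{A}$ (so the orthogonality cases align) is a point the paper leaves implicit, but the argument is the same.
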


\begin{proof}
It suffices to consider the case for 
\[
u=v_1\otimes t(b_1)\quad \text{ and } \quad v=v_2\otimes t(b_2), 
\]
where 
$ v_1, v_2\in S[\tau]$ and $a,b\in \hat{L}_\tau$.
By definition, we have $$(a\cdot u, a\cdot v)= (v_1\otimes t(ab_1), v_2\otimes t(ab_2))=
(v_1, v_2)\cdot (t(ab_1), t(ab_2)).$$
Moreover, $(ab_2)^{-1} a b_1 = b_2^{-1} a^{-1} a b_1=b_2^{-1}b_1$. Therefore, we have 
$\chi((ab_2)^{-1}ab_1) =  \chi(b_2^{-1}b_1)$
if $b_1\mathcal{A}=b_2\mathcal{A}$. Hence, we  have 
$(a\cdot u, a\cdot v)= (u,v)$ as desired. 
\end{proof}

\begin{lem}
For any $\alpha\in L$ and $u,v\in V_L^{T_\chi}(\tau)$, we have 
\[
(e^\alpha \cdot u, v)= (u, \xi^{\la \alpha| \alpha \ra/2} e^{-\alpha}v). 
\]
\end{lem}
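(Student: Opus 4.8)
The plan is to compute both sides directly from the explicit formulas for the twisted vertex operator and the Hermitian form set up in Section \ref{sec:5.2}. The key point is that $e^\alpha$, as a vector in $V_L$ lying in the $0$-th weight space contribution $\iota(a)$ with $\bar a=\alpha$, acts on $V_L^{T_\chi}(\tau)$ essentially via the operator $W^\tau(a,z)$, so I want to extract the relevant mode of $W^\tau(a,z)$ and understand how it interacts with the Hermitian form. Concretely, I would first recall that for $u\in V_L^{T_\chi}(\tau)$ the coefficient $(e^\alpha)\cdot u$ appearing in the lemma should be interpreted as the appropriate mode of $Y^\tau(e^\alpha,z)=W(e^{\Delta_z}e^\alpha,z)$; since $e^\alpha$ is lowest-weight in the Heisenberg direction, $e^{\Delta_z}$ acts trivially on it and we are reduced to analyzing $W^\tau(a,z)$ itself.

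Next I would split $W^\tau(a,z)=3^{-\la\alpha|\alpha\ra/2}\sigma(\alpha)E^-(-\alpha,z)E^+(-\alpha,z)\,a\,z^{-\la\alpha|\alpha\ra/2}$ into its three factors and check their adjoints with respect to $(\cdot,\cdot)$ one at a time. The Heisenberg exponentials $E^\pm$ are handled by the property $(\alpha(n)u,v)=(u,\alpha(-n)v)$ on $S[\tau]$, so the adjoint of $E^-(-\alpha,z)$ is $E^+(-\alpha,z)$ and vice versa (with a sign bookkeeping coming from the $1/n$ in the exponent and $n\mapsto -n$); the group-algebra element $a\in\hat L_\tau$ is unitary on $T_\chi$ up to the scalar recorded in Lemma \ref{ealpha}, namely $(a\cdot u,a\cdot v)=(u,v)$, equivalently the adjoint of $a$ is $a^{-1}$ acting, and $a^{-1}$ corresponds to $e^{-\alpha}$ up to an element of $\la\kappa_6\ra$. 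Finally the scalar factors $3^{-\la\alpha|\alpha\ra/2}\sigma(\alpha)z^{-\la\alpha|\alpha\ra/2}$ are just constants (and powers of $z$) that pass through the form; the point is to track how the product of the scalar from $W^\tau(a,z)$, the scalar from $W^\tau(a^{-1},z)$ after taking adjoints, and the cocycle discrepancy between $a$ and $a^{-1}$ combine to give exactly the factor $\xi^{\la\alpha|\alpha\ra/2}$.

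The main obstacle I expect is precisely this last bookkeeping of scalars and cocycles. One has to be careful about: (i) the difference between the multiplications $\times$ on $\hat L$ and $\times_\tau$ on $\hat L_\tau$, governed by $a\times b=\kappa_6^{\la\tau\bar a|\bar b\ra}a\times_\tau b$ in Remark \ref{Rem:Setid}; (ii) the value of $\kappa_6$ on $T_\chi$, which is $\exp(2\pi\sqrt{-1}/6)$, so that $a\cdot a^{-1}$ (product in $\hat L$) differs from the identity by a sixth root of unity depending on $\la\tau\alpha|(-\alpha)\ra$ or similar; (iii) the behavior of $\sigma(\alpha)=(1-\xi^2)^{\la\tau\alpha|\alpha\ra}$ under $\alpha\mapsto-\alpha$, noting $\sigma(-\alpha)=\sigma(\alpha)$ since $\la\tau\alpha|\alpha\ra$ is unchanged, together with the factor $3^{-\la\alpha|\alpha\ra/2}$ and $|1-\xi^2|^2=3$. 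Assembling these, the net phase should reduce to $\xi^{\la\alpha|\alpha\ra/2}$; verifying this cleanly is where the real work lies, and I would organize it by computing $(e^\alpha\cdot u,v)$ and $(u,e^{-\alpha}\cdot v)$ separately as explicit expressions on $S[\tau]\otimes T_\chi$ and comparing coefficient by coefficient using the orthonormal-basis description of the Heisenberg form and the formula \eqref{ta} for the form on $T_\chi$.

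Once the scalar identity is pinned down, the proof is complete: both sides are sesquilinear and continuous in the obvious sense, so it suffices to check the identity on the spanning vectors $u=v_1\otimes t(b_1)$, $v=v_2\otimes t(b_2)$ as in the proof of Lemma \ref{ealpha}, and on these vectors everything is an explicit finite computation with the Heisenberg form, the characters $\chi$, and the cocycle $c^\tau$. I would therefore present the argument as: reduce to such spanning vectors; compute the action of $e^\alpha$ via $W^\tau(a,z)$; take adjoints of $E^\pm$ and of the lattice element using the established invariance properties; and collect the scalars to read off the factor $\xi^{\la\alpha|\alpha\ra/2}$.
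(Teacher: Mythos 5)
Your proposal starts from a misreading of what the operator $e^\alpha$ is in this lemma, and most of the machinery you plan to deploy is therefore beside the point. Here $e^\alpha\cdot u$ does not denote a mode of the twisted vertex operator $Y^\tau(e^\alpha,z)$; it denotes the action of the group element $e^\alpha\in\hat{L}$ (viewed inside $\hat{L}_\tau$ via the set-theoretic identification of Remark \ref{Rem:Setid}) on the tensor factor $T_\chi$ of $V_L^{T_\chi}(\tau)=S[\tau]\otimes T_\chi$. This is why the lemma sits immediately after Lemma \ref{ealpha}, which establishes $(a\cdot u,a\cdot v)=(u,v)$ for $a\in\hat{L}_\tau$, and why it is invoked in the proof of Lemma \ref{z3t} precisely at the step where the group element $e^\alpha$ occurring inside $W^\tau(a,z)$ is moved across the form. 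Consequently the factors $E^{\pm}(-\alpha,z)$, $\sigma(\alpha)$, $3^{-\la\alpha|\alpha\ra/2}$, $z^{-\la\alpha|\alpha\ra/2}$ and $e^{\Delta_z}$ that you propose to analyse play no role in this lemma; they belong to the proof of the invariance property for the full vertex operator (Lemma \ref{z3t}), which uses the present lemma as an input. If you tried to make those scalars alone produce the factor $\xi^{\la\alpha|\alpha\ra/2}$ you would be proving a different statement, and one that moreover requires the $e^{zL(1)}(-z^{-2})^{L(0)}$ conjugation your plan omits.

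The correct argument is the one you mention only in passing when discussing the group-algebra element: by Lemma \ref{ealpha} the adjoint of $e^\alpha$ is its inverse in $\hat{L}_\tau$, and the whole content of the lemma is to compute that inverse. The paper does this in three lines. Since $\tau$ is fixed-point free of order $3$, $1+\tau+\tau^2=0$ gives $\la\tau\alpha|-\alpha\ra=\tfrac{1}{2}\la\alpha|\alpha\ra$; combining $e^\alpha\times e^{-\alpha}=\kappa_2^{\la\alpha|\alpha\ra/2}e^0$ with Remark \ref{Rem:Setid} then yields $e^\alpha\times_\tau e^{-\alpha}=\kappa_3^{-\la\alpha|\alpha\ra/2}e^0$, and since $\kappa_3$ acts on $T_\chi$ as $\xi$, one gets $e^\alpha\cdot\bigl(\xi^{\la\alpha|\alpha\ra/2}e^{-\alpha}v\bigr)=v$, whence $(u,\xi^{\la\alpha|\alpha\ra/2}e^{-\alpha}v)=(e^\alpha\cdot u,v)$ by Lemma \ref{ealpha}. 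Your proposal does contain the right ingredients (the cocycle discrepancy between $\times$ and $\times_\tau$, the value of $\kappa_6$ on $T_\chi$), but it never isolates the identity $\la\tau\alpha|-\alpha\ra=\tfrac{1}{2}\la\alpha|\alpha\ra$, which is where the exponent $\la\alpha|\alpha\ra/2$ actually comes from, and it buries this short argument inside an unnecessary adjoint computation for $W^\tau(a,z)$.
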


\begin{proof}
Recall from Remark \ref{Rem:Setid} that there exists a set-theoretic identification between $\hat{L}$ and $\hat{L}_\tau$.
Since $\tau$ is fixed-point free and of order $3$, we have 
$\la \tau\alpha| -\alpha\ra = \frac{1}2 \la \alpha| \alpha\ra$.  
It follows from $e^{\alpha} \times e^{-\alpha} =\kappa_2^{\la \alpha| \alpha\ra /2} e^0$ that  $e^{\alpha}\times_\tau e^{-\alpha}= \kappa_3^{-\la \alpha| \alpha\ra/2} e^0$. 
Now by Lemma \ref{ealpha}, 
\[
(u, \xi^{\la \alpha| \alpha \ra/2} e^{-\alpha}v)=  ( e^\alpha\cdot u, e^\alpha\cdot(\xi^{\la \alpha| \alpha \ra/2}  e^{-\alpha}v) ) =(e^\alpha \cdot u, v)
\]
as desired. 
\end{proof}

The following lemma is very similar to \cite[Theorem 4.14]{Dlin}. 

\begin{lem}\label{z3t}
For any $\chi$, $V_L^{T_\chi}(\tau)$ is a unitary $\tau$-twisted module of $(V_L,\phi)$. 
\end{lem}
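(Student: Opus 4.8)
The plan is to verify directly that the positive-definite Hermitian form $(\cdot,\cdot)$ constructed on $V_L^{T_\chi}(\tau)=S[\tau]\otimes T_\chi$ satisfies the invariant property \eqref{Eq:Inv} with respect to the anti-linear involution $\phi$ of $V_L$. Since $\phi$ is defined on generators of $V_L$ by $\alpha_1(-n_1)\cdots\alpha_k(-n_k)\otimes e^\alpha\mapsto(-1)^k\alpha_1(-n_1)\cdots\alpha_k(-n_k)\otimes e^{-\alpha}$, it suffices by the usual generating-function and induction arguments (as in \cite[Theorem 4.14]{Dlin} and \cite[Remark 5.3.3]{FHL}) to check the identity on a spanning set of vectors of $V_L$, namely on the Heisenberg generators $\alpha(-n)\1$ and on the vectors $\iota(a)=e^\alpha$. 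The key inputs are the two preceding lemmas: Lemma \ref{ealpha}, which gives $(a\cdot u,a\cdot v)=(u,v)$ for $a\in\hat L_\tau$, and the lemma giving $(e^\alpha\cdot u,v)=(u,\xi^{\la\alpha|\alpha\ra/2}e^{-\alpha}v)$, together with the Heisenberg-mode adjointness $(\alpha(n)u,v)=(u,\alpha(-n)v)$ built into the form on $S[\tau]$.

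First I would treat the Heisenberg part. For $v=\alpha(-1)\1$ one has, by \eqref{dyg} and the definition of $W$, that $Y^\tau(\alpha(-1)\1,z)=\alpha(z)=\sum_{n\in\frac13\Z}\alpha(n)z^{-n-1}$ up to the $e^{\Delta_z}$-correction, which for a weight-one vector contributes only a scalar term; then the invariance identity \eqref{Eq:Inv} reduces, mode by mode, to the adjointness relation $(\alpha(n)w_1,w_2)=(w_1,\alpha(-n)w_2)$ together with $\phi(\alpha(-1)\1)=-\alpha(-1)\1$, exactly as in the untwisted case. The sign $(-1)$ in $\phi$ is absorbed by the sign in the substitution $a\mapsto e^{zL(1)}(-z^{-2})^{L(0)}a$ applied to a weight-one vector. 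Iterating via the Borcherds/associativity identity — using that every vector of $V_L$ is obtained from the $\alpha(-1)\1$'s and the $e^\alpha$'s by iterated vertex operations, and that the form is already known to be invariant on products once it is invariant on the generators (the argument of \cite[Remark 5.3.3]{FHL}) — extends the identity to all of $V_L$.

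The main obstacle, and the step deserving the most care, is the operator $W^\tau(a,z)$ attached to $\iota(a)=e^\alpha$, since it involves the prefactors $3^{-\la\bar a|\bar a\ra/2}$, $\sigma(\bar a)=(1-\xi^2)^{\la\tau\alpha|\alpha\ra}$, the group element $a\in\hat L_\tau$ acting on $T_\chi$, the exponentials $E^\pm(-\alpha,z)$, and the fractional power $z^{-\la\alpha|\alpha\ra/2}$. Here I would compute the adjoint of $W^\tau(a,z)$ with respect to $(\cdot,\cdot)$: the $E^-(-\alpha,z)$ and $E^+(-\alpha,z)$ pieces are mutually adjoint (up to the substitution $z\mapsto z^{-1}$) by the Heisenberg adjointness, the action of $a$ is adjointed using Lemma \ref{ealpha} and the preceding lemma to produce the factor $\xi^{\la\alpha|\alpha\ra/2}$ and the element $e^{-\alpha}$, and the scalar prefactors must be shown to transform correctly under complex conjugation — concretely, one checks that $\overline{3^{-\la\alpha|\alpha\ra/2}\sigma(\alpha)}$ matched against $3^{-\la\alpha|\alpha\ra/2}\sigma(-\alpha)$ and the powers of $z$ combine so that the right-hand side of \eqref{Eq:Inv} is reproduced with $\phi(e^\alpha)=e^{-\alpha}$. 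Since $|1-\xi^2|^2=3$, the factors $3^{-\la\alpha|\alpha\ra/2}$ and $|\sigma(\alpha)|$ conspire to cancel, which is the arithmetic heart of the verification. Finally, the $e^{\Delta_z}$-dressing in \eqref{dyg} intertwines with $\phi$ because $\phi$ acts on each $\beta_j(m)$ by $-1$ and $\Delta_z$ is quadratic in the modes, so $\phi e^{\Delta_z}=e^{\Delta_z}\phi$ on $V_L$ up to the substitution already accounted for; this lets one pass from $W$ to $Y^\tau$ without disturbing the established identity. Assembling these computations gives the invariance of $(\cdot,\cdot)$, and since the form is positive-definite by construction, $V_L^{T_\chi}(\tau)$ is a unitary $\tau$-twisted $(V_L,\phi)$-module.
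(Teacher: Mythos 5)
Your proposal follows essentially the same route as the paper: reduce the invariance identity \eqref{Eq:Inv} to the generators $\alpha(-1)\1$ and $e^\alpha$ of $V_L$, handle the Heisenberg part by the mode adjointness $(\alpha(n)u,v)=(u,\alpha(-n)v)$, and handle $W^\tau(e^\alpha,z)$ by combining the adjointness of $E^\pm$, Lemma \ref{ealpha} together with the relation $(e^\alpha\cdot u,v)=(u,\xi^{\la\alpha|\alpha\ra/2}e^{-\alpha}v)$, and the arithmetic of the prefactors $3^{-k}$ and $(1-\xi^2)^{-k}$. The only minor imprecision is that $e^{\Delta_z}$ acts as the identity on $\alpha(-1)\1$ (and on $e^\alpha$), rather than merely "contributing a scalar term," but this does not affect the argument.
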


\begin{proof}
We only need to verify the invariant property. 
Since the VOA $V_L$ is generated by $\{\alpha(-1)\cdot \1\mid \alpha\in L\} \cup \{e^\alpha \mid \alpha\in L\}$, 
it is sufficient to check
\[
(Y^\tau (e^{zL(1)} (-z^{-2} )^{L(0)} x, z^{-1})u, v) = (u, Y^\tau (\phi(x), z)v)
\]
for $x\in \{\alpha(-1)\cdot \1\mid \alpha\in L\} \cup \{e^\alpha \mid \alpha\in L\}$ and  $u,v \in V_L^{T_\chi}(\tau)$ (cf.\ \cite[Proposition 2.12]{Dlin}).

Let  $u = v_1\otimes t(a)$ and $v = v_2 \otimes t(b)$ for some $v_1 , v_2 \in S[\tau]$, $a, b \in \hat{L}$. Then
\[
(\alpha(n) u, v) = (u, \alpha(-n)v)
\]
for any $\alpha\in L$ and $n \in \frac{1}3\Z$. Thus for $x = \alpha(-1)\cdot \1$, we have
\[
\begin{split}
& (Y^\tau (e^{zL(1)} (-z^{-2} )^{ L(0)}\alpha (-1)\cdot \1 , z^{-1}) u, v)\\
= &-z^{-2} (Y^\tau(\alpha(-1)\cdot \1, z^{-1}) v_1 \otimes t(a), v_2\otimes t(b))\\
=& -z^{-2}\sum_{n\in \frac{1}3\Z} (\alpha(n)v_1 , v_2 )(t(a), t(b))z^{n+1}\\
= & -\sum_{n\in \frac{1}3\Z} (v_1 , \alpha(-n)v_2 )(t(a), t(b)) z^{n-1}\\
= &(u, Y^\tau (\phi(\alpha(-1)\cdot \1), z)v).
\end{split}
\]
Notice that $e^{\Delta_z}(\alpha(-1)\cdot \1) = \alpha(-1)\cdot \1$. 

Now take $x = e^\alpha$ with $(\alpha, \alpha) = 2k$. Then we have
\[
\begin{split}
& (Y^{\tau} (e^{ zL(1)} (-z^{-2} )^{L(0)} e^\alpha , z^{-1} )u, v)\\
= &(Y^{\tau}(e^{ zL(1)} (-z^{-2} )^{L(0)} e^\alpha , z^{-1} )v_1 \otimes t(a), v_2\otimes t(b))\\
= &(-z^{-2} )^k (3^{-k} (1-\xi^2)^{-k} E^-(-\alpha,z^{-1})E^{+}(-\alpha,z^{-1}) e^\alpha z^{k} v_1 \otimes t(a), v_2\otimes t(b))\\
= & (-z^{-2} )^k  (  v_1 \otimes t(a), 3^{-k} \overline{(1-\xi^2)^{-k}}E^-(\alpha,z)E^{+}(\alpha,z) \xi^{k}e^{-\alpha} z^{k}v_2\otimes t(b))\\
= & (  v_1 \otimes t(a), 3^{-k} (1-\xi^2)^{-k} E^-(\alpha,z)E^{+}(\alpha,z) e^{-\alpha} z^{-k}v_2\otimes t(b))\\
=& (  v_1 \otimes t(a), Y^\tau(\phi(e^\alpha), z) v_2\otimes t(b)) 
\end{split}
\]
as desired. 
\end{proof}
\subsection{Unitary VOA structure on $V^\sharp$}
In this subsection, we check that $V^\sharp$ satisfies the hypotheses (A)--(D) of Theorem \ref{Thm:uni}, which shows that $V^\sharp$ is unitary.

By Theorem \ref{Thm:sharp}, 
$$V^\sharp=V_\Lambda^\tau\oplus (V_\Lambda^{T_1}(\tau))_\Z\oplus(V_\Lambda^{T_2}(\tau^2))_\Z$$
is a holomorphic VOA of CFT-type.
Clearly it is simple and self-dual.
Let $b\in K_{12}^* \setminus K_{12}$ of squared norm $6$. 
By the proof of Lemma \ref{Lem:finite}, we have $\tau'=\varphi_{S^0[1]}$.
Set $g=\varphi_{S^b[0]}$.
Then $\tau'$ and $g$ are commuting automorphisms of $V^\sharp$ of order $3$.
Set $V^{i,j}=\{v\in V\mid \tau'(v)=\xi^i v,\ g(v)=\xi^j v\}$, where $\xi=\exp(2\pi\sqrt{-1}/3)$, and set $V^i=\bigoplus_{j=0}^2 V^{i,j}$.
By the definition of $\tau'$, we have $V^0=V_\Lambda^\tau$ and $V^i=(V_\Lambda^{T_i}(\tau^i))_\Z$ for $i=1,2$.

It follows from Lemmas \ref{UVLtau} and \ref{z3t} that $V^0$ is unitary and $V^\sharp$ is a unitary $V^0$-module.
Hence $V^\sharp$ satisfies the hypotheses (A) and (B).
Let $U$ be the subspace of $R=R(V_{K_{12}}^\tau)$ spanned by $S^0[1]$ and $S^b[0]$.
Then $U$ is a $2$-dimensional totally singular subspace.
The stabilizer of $U$ in $\Omega_8^-(3)$ has the shape $3^{1+8}{:} (\GL_2(3)\times \Omega_4^-(3))$ (see \cite[p141]{Atlas}) and it acts on $U$ as $\GL(U)\cong \GL_2(3)$.
Hence, there exists $\psi\in \Omega_8^-(3)$ such that $\psi(S^0[1])=S^b[0]$ and $\psi(S^b[0])=S^0[1]$.
Remark that $f,g\in R^*$ and $N_{\Aut V^\sharp}(R^*)/R^*\cong \Omega^-_8(3).2$ acts naturally on $R^*$ by conjugation.
Hence a lift $\tilde\psi$ of $\psi$ to $N_{\Aut V^\sharp}(R^*)\cong 3^8.\Omega^-_8(3).2$ satisfies $\tilde{\psi}^{-1}f\tilde{\psi}=g$ and $\tilde\psi^{-1}g\tilde\psi=f$.
Thus $V^\sharp$ satisfies the hypothesis (C).

By \eqref{Eq:aut1}, the automorphism group of $V^0$ is finite.
Let $\bar{g}$ denote the restriction of $g$ to $V^0$.
Since $V^{0,0}$ is the $\bar{g}$-fixed points of $V^0\cong V_\Lambda^\tau$, we have $V^{0,0}=V_L^\tau$, where $L=\{v\in\Lambda\mid \langle v|b\rangle\in3\Z\}$ and $\langle\cdot|\cdot\rangle$ is the inner product of $\R\otimes_\Z\Lambda$.
By \cite{CL14}, we know that the irreducible $V_L^\tau$-modules $V^{0,1}$ and $V^{0,2}$ are simple currents.
Hence $V^{0}$ is a $\Z_3$-graded simple current extension of $V^{0,0}$.
Let $A$ be the cyclic group of order $3$ corresponding to the grading.
Then $A^*=\langle \bar{g}\rangle$.
Consider the action of $\Aut V^{0,0}$ on the set of isomorphism classes of simple current $V^{0,0}$-modules.
Let $S_A$ be the set of isomorphism classes of $V^{0,j}$, $j=0,1,2$.
By Theorem \ref{NCthm}, the stabilizer of $S_A$ is isomorphic to the finite group $N_{\Aut V^0}(\langle \bar{g}\rangle)/\langle \bar{g}\rangle$. 
Since $V^{0,0}$ is $C_2$-cofinite, there are finitely many irreducible $V^{0,0}$-modules.
Hence the $\Aut V^{0,0}$-orbit of $S_A$ is a finite set.
Thus $\Aut V^{0,0}$ is finite, and $V^\sharp$ satisfies the hypothesis (D).

The VOA $V^\sharp$ therefore satisfies all the hypotheses in Theorem \ref{Thm:uni} and we have our main result in this section.   

\begin{thm}\label{Thm:Unitary}
The VOA $V^\sharp$ has a unitary structure as an extension of unitary $V_\Lambda^\tau$-module structure.
\end{thm}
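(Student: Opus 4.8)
The plan is to deduce the statement directly from Theorem \ref{Thm:uni} by exhibiting on $V^\sharp$ the data it requires. Concretely, I would take $V = V^\sharp$, let $f = \tau'$ be the grading automorphism of the $\Z_3$-graded simple current extension $V^\sharp = V_\Lambda^\tau \oplus (V_\Lambda^{T_1}(\tau))_\Z \oplus (V_\Lambda^{T_2}(\tau^2))_\Z$, and let $g = \varphi_{S^b[0]} \in R^*$ as in \eqref{Eq:phiM}, where $b \in K_{12}^* \setminus K_{12}$ has squared norm $6$. These are commuting automorphisms of order $3$, and from the definition of $\tau'$ one reads off the identifications $V^0 = V_\Lambda^\tau$ and $V^i = (V_\Lambda^{T_i}(\tau^i))_\Z$ for $i = 1, 2$. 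It then remains to verify hypotheses (A)--(D) of Theorem \ref{Thm:uni}.

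Hypotheses (A) and (B) follow at once from the earlier lemmas: Lemma \ref{UVLtau} applied with $L = \Lambda$ shows $(V_\Lambda^\tau, \phi)$ is a unitary VOA, while Lemma \ref{z3t} shows each $V_\Lambda^{T_i}(\tau^i)$ is a unitary $\tau^i$-twisted $(V_\Lambda, \phi)$-module, so that restricting the invariant Hermitian form to the integral-weight subspaces makes $V^\sharp$ a unitary $(V_\Lambda^\tau, \phi)$-module. For hypothesis (C) I would work inside $H_2 = N_{\Aut V^\sharp}(R^*) \cong 3^8.\Omega_8^-(3).2$. Since $\wt(S^0[1]), \wt(S^b[0]) \in \Z$, both are nonzero singular vectors of $(R, q)$, and being mutually orthogonal they span a $2$-dimensional totally singular subspace $U$. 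The stabilizer of $U$ in $\Omega_8^-(3)$ has shape $3^{1+8}{:}(\GL_2(3) \times \Omega_4^-(3))$ by \cite[p141]{Atlas} and surjects onto $\GL(U) \cong \GL_2(3)$, so there is $\psi \in \Omega_8^-(3)$ with $\psi(S^0[1]) = S^b[0]$ and $\psi(S^b[0]) = S^0[1]$; lifting $\psi$ to $\tilde\psi \in N_{\Aut V^\sharp}(R^*)$ and using that $N_{\Aut V^\sharp}(R^*)/R^*$ acts on $R^* \cong R$ in the natural way, one gets $\tilde\psi^{-1} f \tilde\psi = g$ and $\tilde\psi^{-1} g \tilde\psi = f$.

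The crux is hypothesis (D). Here $\Aut V^0 = \Aut V_\Lambda^\tau$ is finite by \eqref{Eq:aut1}. For $\Aut V^{0,0}$ I would first identify the fixed-point subspace: writing $\bar g$ for the restriction of $g$ to $V^0 \cong V_\Lambda^\tau$, one has $V^{0,0} = (V^0)^{\bar g} = V_L^\tau$ with $L = \{v \in \Lambda \mid \langle v|b\rangle \in 3\Z\}$. By \cite{CL14} the irreducible $V_L^\tau$-modules $V^{0,1}$ and $V^{0,2}$ are simple currents, so $V^0$ is a $\Z_3$-graded simple current extension of $V^{0,0}$ with grading group $A$ and $A^* = \langle \bar g\rangle$; Theorem \ref{NCthm} then identifies $N_{\Aut V^0}(\langle \bar g\rangle)/\langle \bar g\rangle$ with the stabilizer in $\Aut V^{0,0}$ of $S_A = \{V^{0,0}, V^{0,1}, V^{0,2}\}$, and since $V^{0,0}$ is $C_2$-cofinite it has only finitely many irreducible modules, so the $\Aut V^{0,0}$-orbit of $S_A$ is finite; together with finiteness of $N_{\Aut V^0}(\langle \bar g\rangle)$ this forces $\Aut V^{0,0}$ to be finite, hence compact. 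With (A)--(D) in place, Theorem \ref{Thm:uni} produces an anti-linear involution $\Phi$ and a positive-definite invariant Hermitian form on $V^\sharp$, giving the desired unitary structure. The only genuinely nontrivial step is pinning down $V^{0,0}$ as the lattice orbifold $V_L^\tau$ and controlling its automorphism group through the simple current formalism; the rest is a direct appeal to results already established.
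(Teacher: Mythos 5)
Your proposal is correct and follows essentially the same route as the paper: the same choice of $f=\tau'$ and $g=\varphi_{S^b[0]}$ with $b$ of squared norm $6$, the same appeal to Lemmas \ref{UVLtau} and \ref{z3t} for (A)--(B), the same use of the stabilizer $3^{1+8}{:}(\GL_2(3)\times\Omega_4^-(3))$ of the totally singular plane $U$ to produce $\tilde\psi$ for (C), and the same identification $V^{0,0}=V_L^\tau$ combined with Theorem \ref{NCthm} and $C_2$-cofiniteness to get finiteness of $\Aut V^{0,0}$ for (D). No gaps.
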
 

\section{Characterization of the Moonshine VOA}

In this section, we prove the following characterization of the Moonshine VOA $V^\natural$:

\begin{thm}\label{Thm:chara} Let $V$ be a $C_2$-cofinite, holomorphic VOA of CFT type of central charge $24$. 
We also assume the following: 
\begin{enumerate}[{\rm (a)}]
\item $V_1=0$;
\item there exists an anti-linear involution $\phi$ of $V$ such that $(V,\phi)$ is unitary;
\item the automorphism group $\Aut V$ of $V$ is isomorphic to the Monster simple group $\M$; 
\item the action of $\Aut V$ on $V_2$ is not trivial;
\item $V$ contains an Ising vector fixed by $\phi$.
\end{enumerate}
Then $V$ is isomorphic to the Moonshine VOA $V^\natural$.
\end{thm}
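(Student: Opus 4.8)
The plan is to recognize the Griess algebra $V_2$ as the Monstrous Griess algebra and then invoke the uniqueness theorem of \cite{DGL}. Since $V$ is holomorphic (hence rational) and $C_2$-cofinite of CFT type with $V_1=0$ and central charge $24$, modular invariance of its character (Zhu) forces $\dim V_2=196884$; thus $V_2$ carries the commutative nonassociative product $a\cdot b:=a_1 b$ and the invariant bilinear form $\langle\cdot,\cdot\rangle$, which is positive-definite by (b) and $\M$-invariant (every automorphism preserves the invariant form). By (c) and (d), $V_2$ is a faithful $\M$-module with the fixed vector $\omega$, and since the smallest nontrivial irreducible $\M$-module has dimension $196883$ we obtain $V_2=\C\omega\oplus\omega^\perp$ as $\M$-modules, with $\omega^\perp$ irreducible of dimension $196883$; both $\cdot$ and $\langle\cdot,\cdot\rangle$ are $\M$-equivariant. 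Recall that any Ising vector $e$ of $V$ (a conformal vector generating $L(1/2,0)$) has $\langle e,e\rangle=1/4$ and, since $\omega-e$ is the conformal vector of the commutant of the subVOA generated by $e$ and is orthogonal to $e$, also $\langle e,\omega\rangle=1/4$; by \cite{Mi96} there is an attached involution $\tau_e\in\Aut V=\M$ (possibly trivial a priori), with $\tau_{ge}=g\tau_e g^{-1}$ for $g\in\M$.

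The first and main step (Lemma~\ref{Lem:1to1}) is that $e\mapsto\tau_e$ is a bijection from the set of Ising vectors of $V$ onto the conjugacy class $2A$ of $\M$. Fix an Ising vector $e$ and let $I_e$ be the set of Ising vectors $f$ with $\tau_f=\tau_e$. By \cite{Mi96}, using positive-definiteness (hypothesis (b)), distinct elements of $I_e$ are orthogonal, so Cauchy--Schwarz applied to $s=\sum_{f\in I_e}f$ and $\omega$, together with $\langle f,\omega\rangle=\langle f,f\rangle=1/4$ and $\langle\omega,\omega\rangle=12$, yields $|I_e|\le 48$. Now $\Span_\C I_e$ is $C_\M(\tau_e)$-invariant, hence so is its image $P$ under the orthogonal projection $V_2\to\omega^\perp$; here $P$ is a nonzero $C_\M(\tau_e)$-submodule of $\omega^\perp$ (it contains $e-\tfrac{1}{48}\omega\neq 0$) of dimension at most $48$. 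Since $\omega^\perp$ is $\M$-irreducible of dimension $196883$, the case $\tau_e=1$ is impossible, so $\tau_e$ is an involution. It cannot lie in $2B$: for $t\in 2B$, $C_\M(t)\cong 2^{1+24}.Co_1$, and every irreducible constituent of the restriction of $\omega^\perp$ to $C_\M(t)$ has dimension $>48$ (this is visible from the decomposition $V^\natural=V_\Lambda^+\oplus V_\Lambda^{T,+}$, the smallest constituent having dimension $299$). Hence $\tau_e\in 2A$, where $C_\M(\tau_e)\cong 2.B$ and the restriction of $\omega^\perp$ decomposes as $\mathbf 1\oplus\mathbf{4371}\oplus\mathbf{96255}\oplus\mathbf{96256}$; so the only constituent of dimension $\le 48$ is the trivial one, and $P$ (hence $\Span_\C I_e$) lies in the two-dimensional trivial isotypic component of $V_2$ over $2.B$. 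If $|I_e|=2$ then $\Span_\C I_e$ equals that component, so $\omega\in\Span_\C I_e$, whence $\omega=e+f$ (the coefficients being forced to $1$ by pairing with $e$ and $f$), contradicting $\langle\omega,\omega\rangle=12\ne 1/2$. Therefore $I_e=\{e\}$, proving injectivity. The map is $\M$-equivariant with image contained in $2A$ and, by (e), nonempty; hence the image is all of $2A$ and the map is a bijection.

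Next I would show $(V_2,\cdot,\langle\cdot,\cdot\rangle)\cong V^\natural_2$ as commutative algebras with invariant form. Because $\omega\cdot v=2v$ for $v\in V_2$, the product is forced on every summand of $\mathrm{Sym}^2 V_2$ meeting $\C\omega$; the only remaining datum is the $\M$-equivariant symmetric map $\mathrm{Sym}^2(\omega^\perp)\to\C\omega\oplus\omega^\perp$, whose $\C\omega$-component is determined by the invariant form (normalized so that $\langle\omega,\omega\rangle=12$) and whose $\omega^\perp$-component lies in the one-dimensional space $\Hom_\M(\mathrm{Sym}^2(\omega^\perp),\omega^\perp)$ (the multiplicity of $\mathbf{196883}$ in $\mathrm{Sym}^2(\mathbf{196883})$ being $1$, by the inner products of irreducible characters of $\M$). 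Thus the algebra is determined by a single scalar. Writing the Ising vector from (e) as $e=\tfrac{1}{48}\omega+e^\perp$ with $0\ne e^\perp\in\omega^\perp$ and projecting $e\cdot e=2e$ onto $\omega^\perp$ produces a nonzero linear equation that fixes this scalar; carrying out the identical computation with an Ising vector of $V^\natural$ (matched to $e$ via the $\M$-equivariant bijection above) gives the same value. Hence $V_2\cong V^\natural_2$. Finally, $V$ is holomorphic, $C_2$-cofinite, of CFT type and self-dual, of central charge $24$, with $V_1=0$ and Griess algebra isomorphic to $V^\natural_2$, so \cite{DGL} gives $V\cong V^\natural$.

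The main obstacle is the second paragraph: deducing $\tau_e\in 2A$ and $|I_e|=1$ from the representation theory of $\M$. This is where hypotheses (b), (c), (d) are used jointly --- (b) for the orthogonality of $I_e$ and the bound $|I_e|\le 48$, and (c)--(d) for the module structure $V_2=\C\omega\oplus\omega^\perp$ --- together with the explicit decompositions of the $196883$-dimensional irreducible upon restriction to the involution centralizers $2.B$ and $2^{1+24}.Co_1$. The remaining steps (pinning the Griess-algebra scalar by the ``power'' of an Ising vector, and the appeal to \cite{DGL}) are comparatively formal.
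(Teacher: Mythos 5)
Your overall strategy coincides with the paper's: prove that $e\mapsto\tau_e$ is a bijection from the (real) Ising vectors onto the conjugacy class $2A$ (Lemma \ref{Lem:1to1}), deduce that the Griess algebra structure on $V_2$ is pinned down by a single scalar which the idempotency $e_{1}e=2e$ of an Ising vector determines, and conclude with \cite{DGL}. The centralizer decompositions for $2A$ and $2B$, the dimension count on $\Span_\C I_e$, and the final normalization all match the paper; your Cauchy--Schwarz bound $|I_e|\le 48$ replaces the paper's Virasoro-frame argument for $|I_e|\le 47$, and either version suffices.

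There is, however, one genuine gap: your exclusion of the case $\tau_e=1$ is circular. The orthogonality statement you quote from \cite{Mi96} (Theorem 6.12) applies to Ising vectors that are \emph{not} of $\sigma$-type, i.e.\ whose $\tau$-involution is nontrivial; distinct $\sigma$-type Ising vectors with $\tau_e=\tau_f=1$ need not be orthogonal (their inner products can be $1/32$, which is exactly why the associated $\sigma$-involutions can generate nonabelian $3$-transposition groups, e.g.\ in code VOAs). Consequently you may not use the bound $\dim\Span_\C I_e\le 48$ to rule out $\tau_e=1$: that bound rests on the orthogonality, which rests on $\tau_e\ne 1$. The paper disposes of this case separately (Lemma \ref{Lem:tau}) by a different mechanism: if $\tau_e=\sigma_e=1$ then $V\cong L(1/2,0)\otimes V'$ by \cite[Proposition 4.9]{Mi96}, contradicting holomorphicity; and if $\tau_e=1$ but $\sigma_e\ne 1$, the normal subgroup of $\Aut V$ generated by $\sigma$-involutions would be all of $\M$ by simplicity and would be a $3$-transposition group by \cite[Theorem 6.13]{Mi96}, which $\M$ is not. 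Some argument of this kind must precede your representation-theoretic dichotomy between $2A$ and $2B$. A secondary, minor point: the orthogonality theorem also requires the Ising vectors to lie in the real form $V_\R=\{v\in V\mid\phi(v)=v\}$, where the invariant form is positive definite, so the bijection should be stated for Ising vectors of $V_\R$ as in the paper; this costs nothing, since $\Aut V$ preserves $V_\R$ and hypothesis (e) supplies a real Ising vector.
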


\begin{rmk}\label{rmk:1} The Moonshine VOA $V^\natural$ satisfies the conditions (a)--(e) in the theorem above.
\end{rmk}

\begin{rmk}\label{rmk:2} The condition (c) implies that $\Aut V$ is compact.
Hence by Proposition \ref{Prop:CKLW} (1), $\Aut V$ preserves the (unique) real form $V_\R=\{v\in V\mid \phi(v)=v\}$ of $V$.
\end{rmk}

We now assume that $V$ is a $C_2$-cofinite, holomorphic VOA of CFT-type of central charge $24$ satisfying (a)--(e) in Theorem \ref{Thm:chara}.
Recall that $e\in V_2$ is an Ising vector if the subVOA $\langle e\rangle_{\rm VOA}$ generated by $e$ is isomorphic to $L(1/2,0)$ and $e$ is the conformal vector of $\langle e\rangle_{\rm VOA}$. For an Ising vector $e$, one can define an automorphism $\tau_e\in \Aut V$ and an automorphism $\sigma_e\in\Aut V^{\tau_e}$ as described in \cite{Mi96}.

\begin{lem}\label{Lem:tau} Let $e$ be an Ising vector of $V$.
Then  $\tau_e\neq 1$.
\end{lem}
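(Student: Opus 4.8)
The plan is to assume $\tau_e=1$ and derive a contradiction with the holomorphicity of $V$. By \cite{Mi96}, $\tau_e=1$ is equivalent to $V=V_e(0)\oplus V_e(1/2)$, where $V_e(h)$ denotes the sum of all irreducible submodules of $V$, regarded as a module over $\langle e\rangle_{\rm VOA}\cong L(1/2,0)$, that are isomorphic to $L(1/2,h)$; thus the $L(1/2,1/16)$-isotypic component vanishes. Let $C=\mathrm{Com}_V(\langle e\rangle_{\rm VOA})$ be the commutant. Since $V$ and $L(1/2,0)$ are strongly rational of CFT type, $C$ is strongly rational of CFT type (alternatively, $C$ is strongly rational because $V$ is a finite direct sum of $L(1/2,0)\otimes C$-modules), it has central charge $24-1/2=47/2$, and $C_1\subseteq V_1=0$. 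Decomposing $V$ over the full subVOA $L(1/2,0)\otimes C$, one may write $V=\bigoplus_{h\in\{0,1/2,1/16\}}L(1/2,h)\otimes\mathcal{M}_h$ with each $\mathcal{M}_h$ a $C$-module, $\mathcal{M}_0\cong C$ (the commutant itself, of lowest weight $0$), and $\mathcal{M}_{1/16}=0$ by $\tau_e=1$.

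First I would rule out $\mathcal{M}_{1/2}=0$: in that case $V=L(1/2,0)\otimes C$ has $L(1/2,1/2)\otimes C$ as an irreducible module not isomorphic to $V$, contradicting holomorphicity. So $\mathcal{M}_{1/2}\neq 0$, and it remains to contradict $\mathcal{M}_{1/16}=0$ in this case; for this I would invoke Zhu's modular invariance. As $V$ is holomorphic of central charge $24$ with $V_1=0$, its graded character equals $J=j-744$, which is $\SL_2(\Z)$-invariant. Writing $\mathcal{M}_h=\bigoplus_j n_{h,j}C^{(j)}$ with $C^{(j)}$ the irreducible $C$-modules, the identity $\ch_V=\chi_0\,\ch_{\mathcal{M}_0}+\chi_{1/2}\,\ch_{\mathcal{M}_{1/2}}$ (with $\chi_0,\chi_{1/16},\chi_{1/2}$ the characters of the irreducible $L(1/2,0)$-modules) is exactly the expansion of $\ch_V$ in the linearly independent characters of the irreducible $L(1/2,0)\otimes C$-modules, in which all $\chi_{1/16}$-terms are absent. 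Applying the $S$-transformation $\tau\mapsto-1/\tau$ and using the Ising $S$-matrix entries $S_{0,1/16}=-S_{1/2,1/16}=1/\sqrt{2}$, $S_{1/16,1/16}=0$ together with the invertibility of the $S$-matrix of $C$, the vanishing of the $\chi_{1/16}$-component of $\ch_V=\ch_V\circ S$ forces $n_{0,j}=n_{1/2,j}$ for all $j$, i.e. $\mathcal{M}_0\cong\mathcal{M}_{1/2}$ as $C$-modules. Hence $\mathcal{M}_{1/2}\cong C$ has lowest weight $0$, so $V$ contains $L(1/2,1/2)\otimes C$ and therefore a nonzero vector of weight $1/2$, contradicting that $V$ is $\Z$-graded. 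Therefore $\tau_e\neq 1$.

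A variant of the last step replaces modular invariance by a quantum-dimension count: holomorphicity makes $V$ a Lagrangian commutative algebra in $\mathrm{Rep}(L(1/2,0)\otimes C)$, so $\sum_h(\text{multiplicity of }L(1/2,h)\text{ in }V)\cdot\qdim L(1/2,h)=\sqrt{\glob(L(1/2,0)\otimes C)}=2\sqrt{\glob(C)}$; with $\mathcal{M}_{1/16}=0$, $\qdim L(1/2,0)=\qdim L(1/2,1/2)=1$, $\mathcal{M}_0\cong C$, and $\mathcal{M}_{1/2}$ a simple current $C$-module (since $V_e(1/2)\boxtimes V_e(1/2)=V_e(0)$ is the unit), the left-hand side equals $1+1=2$, forcing $\glob(C)=1$, i.e. $C$ holomorphic of central charge $47/2$ — impossible, since a $C_2$-cofinite holomorphic VOA has central charge in $8\Z$. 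In either approach, the main obstacle is the same: setting up the decomposition of $V$ over $L(1/2,0)\otimes C$, justifying the strong rationality of the commutant $C$, and carrying out the modular (or categorical) bookkeeping that isolates the missing $1/16$-sector as the source of the contradiction; the remaining verifications are routine.
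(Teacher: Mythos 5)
Your route is genuinely different from the paper's, and in its second (and decisive) case it rests on a step that is not justified: the claim that the commutant $C=\mathrm{Com}_V(\langle e\rangle_{\rm VOA})$ is strongly rational (rational, $C_2$-cofinite, self-dual). This is not a routine verification — rationality and $C_2$-cofiniteness of coset/commutant subalgebras is a deep problem, and your parenthetical justification (``$C$ is strongly rational because $V$ is a finite direct sum of $L(1/2,0)\otimes C$-modules'') is essentially circular: complete reducibility of the multiplicity spaces $\mathcal{M}_h$ as $C$-modules, finiteness of the set of irreducible $C$-modules, and $C_2$-cofiniteness of $C$ are exactly what would need to be proved. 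Without these, Zhu's modular invariance gives you no control over $\ch_{\mathcal{M}_0}(-1/\tau)$ and $\ch_{\mathcal{M}_{1/2}}(-1/\tau)$, so the $S$-matrix bookkeeping that is supposed to force a nonzero $1/16$-sector cannot be carried out; the same objection applies to the quantum-dimension/Lagrangian-algebra variant, which presupposes that $\mathrm{Rep}(C)$ is a modular tensor category. (A secondary issue: even granting rationality, extracting the multiplicities $n_{h,j}$ from the character identity requires linear independence of the characters of the irreducible $L(1/2,0)\otimes C$-modules, which can fail; one must work with one-point functions or the full vector-valued modular form.)

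The paper avoids all of this by splitting on $\sigma_e$ rather than on $\mathcal{M}_{1/2}$. When $\sigma_e=1$ it argues exactly as in your first case (via Miyamoto's Proposition 4.9, $V\cong L(1/2,0)\otimes V'$, contradicting holomorphy). When $\sigma_e\neq 1$ it uses hypothesis (c), $\Aut V\cong\M$: the normal subgroup generated by the $\sigma$-involutions would equal $\M$ by simplicity, yet Miyamoto's Theorem 6.13 makes it a $3$-transposition group, which $\M$ is not. Note that your argument never invokes (c); you are in effect trying to prove a stronger, hypothesis-free statement, and that is precisely why you are forced into the hard commutant machinery. To salvage your write-up within the paper's framework, replace the modular-invariance case by the $\sigma$-involution argument (or supply a genuine proof of the strong rationality of $C$, which is well beyond the scope of this lemma).
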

\begin{proof} Suppose $\tau_e=\sigma_e=1$.
By \cite[Proposition 4.9]{Mi96}, $V$ is isomorphic to $L(1/2,0)\otimes V'$ for some VOA $V'$.
Then $V$ has an irreducible module $L(1/2,1/2)\otimes V'$ not isomorphic to $V$, which contradicts that $V$ is holomorphic.

Suppose that $\tau_e=1$ and $\sigma_e\neq1$.
Let $G$ be the subgroup of $\Aut V$ generated by $\sigma$-involutions associated to Ising vectors in $V$.
Then $G\neq\{1\}$ and $G$ is normal in $\Aut V$.
It follows from the simplicity of $\Aut V$ that $G=\Aut V$.
By \cite[Theorem 6.13]{Mi96}, $G$ is a $3$-transposition group, which contradicts that $\Aut V(\cong \M)$ is not a $3$-transposition group.
Therefore $\tau_e\neq1$.
\end{proof}

Let $I$ be the set of all Ising vectors of $V_\R$.

\begin{lem}\label{Lem:1to1}
\begin{enumerate}[{\rm (1)}]
\item For any $e\in I$, the $\tau$-involution $\tau_e$ belongs to the conjugacy class $2A$ of $\M$.
\item The map $e\mapsto \tau_e$ from $I$ to the conjugacy class $2A$ of $\M$ is bijective.
\end{enumerate}
\end{lem}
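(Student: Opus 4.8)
The plan is to determine the module structure of the Griess algebra $V_2$ over $\Aut V\cong\M$, and then to play off Miyamoto's orthogonality of Ising vectors sharing a $\tau$-involution against the known restriction of the minimal faithful $\M$-module to centralizers of involutions. First I would record that $\dim V_2=196884$ (from modular invariance of the graded character of $V$ together with $V_1=0$), and that as an $\M$-module $V_2=\C\omega\oplus\omega^\perp$, where $\C\omega$ is the trivial submodule spanned by the conformal vector, $\omega^\perp$ is its orthogonal complement with respect to the invariant form (positive definite on $V_\R$ by (b)), and $\omega^\perp$ is the irreducible $196883$-dimensional $\M$-module --- this last point because $\M$ has no nontrivial complex irreducible of dimension below $196883$ and, by (d), acts nontrivially on $V_2$. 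In particular the projection $\pi\colon V_2\to\omega^\perp$ along $\C\omega$ is $\M$-equivariant.

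Next I would use the elementary geometry of Ising vectors. Normalizing so that $\langle\1,\1\rangle=1$ (hence $\langle\omega,\omega\rangle=12$), every Ising vector $f$ satisfies $\langle f,f\rangle=\langle f,\omega\rangle=1/4$, so $f=\tfrac1{48}\omega+f^\perp$ with $f^\perp=\pi(f)$ and $\langle f^\perp,f^\perp\rangle=47/192\neq0$. Fix $e\in I$ and put $t=\tau_e$; by Lemma \ref{Lem:tau}, $t\neq1$, and $t^2=1$, so $t$ lies in $2A$ or $2B$. Let $I_e=\{f\in I\mid\tau_f=t\}$. Any $g\in C_\M(t)$ sends $I_e$ to itself, since $\tau_{gf}=g\tau_f g^{-1}=t$ and $g$ preserves $V_\R$ (Remark \ref{rmk:2}); hence $\Span_\C I_e$ and its image $U_e:=\pi(\Span_\C I_e)\subset\omega^\perp$ are $C_\M(t)$-submodules, with $U_e\neq0$ because $e^\perp\neq0$. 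By Miyamoto's theorem \cite{Mi96} (where the unitarity hypothesis (b) is used essentially), distinct elements of $I_e$ are mutually orthogonal, so for $f\neq g$ in $I_e$ one gets $\langle f^\perp,g^\perp\rangle=-1/192$. Thus the Gram matrix of $\{f^\perp\mid f\in I_e\}$ has diagonal entries $47/192$ and off-diagonal entries $-1/192$; being positive semidefinite, it forces $|I_e|\le48$, and in particular $\dim U_e\le48$.

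Finally I would invoke the restriction of $\omega^\perp$ to $C_\M(t)$. If $t\in2B$, with $C_\M(t)\cong2^{1+24}_+.Co_1$, every nonzero $C_\M(t)$-submodule of $\omega^\perp$ has dimension exceeding $48$, contradicting $0\neq U_e$ with $\dim U_e\le48$; this proves part (1). If $t\in2A$, with $C_\M(t)\cong2.B$ (a double cover of the Baby Monster), the restriction of $\omega^\perp$ decomposes as $1+4371+96255+96256$, so the trivial constituent has multiplicity one and all others have dimension exceeding $48$; a submodule of dimension at most $48$ must therefore be the $1$-dimensional trivial one, i.e.\ $\dim U_e=1$. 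But if $|I_e|\ge2$, two distinct $f,g\in I_e$ yield nonzero proportional vectors $f^\perp,g^\perp\in U_e$, say $g^\perp=\lambda f^\perp$; then $\langle g^\perp,g^\perp\rangle=\lambda^2\langle f^\perp,f^\perp\rangle$ forces $\lambda^2=1$, whereas $\langle f^\perp,g^\perp\rangle=\lambda\langle f^\perp,f^\perp\rangle=\pm47/192\neq-1/192$, a contradiction. Hence $I_e=\{e\}$, giving injectivity of $e\mapsto\tau_e$; and since $\Aut V=\M$ acts on $I$ with $\tau_{ge}=g\tau_eg^{-1}$, the image is a union of $\M$-conjugacy classes inside $2A$, nonempty by (e), hence all of $2A$. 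The step I expect to be the main obstacle is exactly the assembly of representation-theoretic data here --- Miyamoto's orthogonality together with its reliance on unitarity, and the precise ATLAS-level decomposition of the $196883$-dimensional module under the two classes of involution centralizers --- everything else being linear algebra with the invariant bilinear form.
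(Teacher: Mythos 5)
Your proof is correct and follows the same overall strategy as the paper: Miyamoto's orthogonality for Ising vectors sharing a $\tau$-involution (with Lemma \ref{Lem:tau} excluding $\sigma$-type vectors and hypothesis (b) supplying the positive-definite form on $V_\R$), a dimension bound on $\Span_\C I_e$, the restriction of $V_2$ to $C_\M(t)$ for $t$ in $2A$ versus $2B$, and conjugacy plus hypothesis (e) for surjectivity. The one genuine difference is the bound on $|I_e|$: the paper also starts from positive-definiteness (forty-eight mutually orthogonal Ising vectors would sum to $\omega$) but then sharpens the bound to $47$ by arguing that such a configuration would be a Virasoro frame with $1/16$-code $\{(0^{48}),(1^{48})\}$, whose associated code VOA forces $V_1\neq0$. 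Your Gram-matrix computation (eigenvalues $(48-n)/192$ and $48/192$) yields only $|I_e|\le 48$, but this weaker bound suffices for everything downstream, since among the irreducible $C_\M(t)$-constituents of $\omega^\perp$ there is none of dimension at most $48$ in the $2B$ case and only a single trivial one in the $2A$ case; so you obtain a shorter, purely linear-algebraic argument that bypasses the frame/code-VOA detour. Your endgame for $2A$ — proportionality of $f^\perp$ and $g^\perp$ inside the one-dimensional trivial constituent, contradicting $\langle f^\perp,g^\perp\rangle=-1/192$ — is likewise an explicit substitute for the paper's terser appeal to the two-dimensional fixed subspace of $C_\M(\tau_e)$ in $V_2$, and the two are equivalent in content.
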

\begin{proof} 
Let $e\in I$ and set $I_e=\{f\in I\mid \tau_e=\tau_f\}$.
Let $f\in I_e$.
Recall from Lemma \ref{Lem:tau} that $V$ does not have Ising vectors of $\sigma$-type.
By \cite[Theorem 6.12]{Mi96} and $e,f\in V_\R$, along with (b), $e=f$ or $e$ is orthogonal to $f$.
Suppose $|I_e|\ge48$.
Then $V$ has a mutually orthogonal $48$ Ising vectors, and the sum of them must coincides with the conformal vector of $V$ since the invariant form on $V_\R$ is positive-definite.
Hence $V$ has a Virasoro frame whose Ising vectors induce the same $\tau$-involution.
The $1/16$-code is $\{(0^{48}),(1^{48})\}$, and the code VOA associated to the dual code of $1/16$-code, i.e., the code consisting of all even codewords, is a subVOA of $V$, which implies that $V_1\neq0$.
This contradicts (a).
Thus $|I_e|\le 47$.

Let $C$ be the centralizer of $\tau_e$ in $\Aut V$.
Recall from (c) and (d) that $V_2=\C\omega\oplus\omega^\perp$ as a direct sum of irreducible $\M$-modules, where $\omega^\perp$ is the orthogonal complement of $\omega$ with respect to the invariant form.
Suppose that $\tau_e$ belongs to the conjugacy class $2B$.
Then $V_2$ decomposes into a direct sum of irreducible $C$-modules as follows:
$$V_2=\underline{1}+\underline{299}+\underline{98280}+\underline{98304},$$
where $\underline{n}$ means an $n$-dimensional irreducible $C$-module.
Recall that $V_2$ contains a $C$-invariant subspace $\Span_\C I_e $ whose dimension is at most $47$.
This contradicts that $\underline{1}$ is spanned by the conformal vector $\omega$ and that $e\neq\omega$ and $e\in I_e$.
Hence $\tau_e$ belongs to the conjugacy class $2A$, and $V_2$ decomposes into a direct sum of irreducible $C$-modules as follows:
\begin{equation}
V_2=\underline{1}+\underline{1}+\underline{4371}+\underline{96255}+\underline{96256}.\label{Eq:C2A}
\end{equation}
Since $\Span_\C I_e $ is a $C$-submodule of $V_2$ whose dimension is at most $47$ and $\omega$ is fixed by $C$, we have $|I_e|=1$.
Hence $I_e=\{e\}$, and we obtain the injectivity.

Let $g$ be an element in the conjugacy class $2A$ of $\M$.
Then there exists $h\in\M$ such that $h^{-1}gh=\tau_e$.
Hence $g=\tau_{h(e)}$.
By Remark \ref{rmk:2}, $h(e)\in I$, and we obtain the surjectivity
\end{proof}

Theorem \ref{Thm:chara} follows from the next proposition.

\begin{prop} As algebras, $V_2$ is isomorphic to $V^\natural_2$. 
In particular,  $V$ is isomorphic to $V^\natural$ as VOAs.  
\end{prop}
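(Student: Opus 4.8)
The plan is to pin down the commutative (non-associative) algebra $(V_2,{\cdot})$, where $u\cdot v:=u_1v$, using only the representation theory of $\M$ together with the normalization supplied by an Ising vector, and then to quote the uniqueness result of \cite{DGL} for the VOA. First I would identify $V_2$ as an $\M$-module. Since $V$ is $C_2$-cofinite, holomorphic and of CFT type with central charge $24$, its graded character is modular invariant and equals $j(\tau)-744$; as $V_1=0$ this forces $\dim V_2=196884$. The conformal vector $\omega$ spans a trivial $\M$-submodule, so $V_2=\C\omega\oplus W$ with $\dim W=196883$. Because the smallest nontrivial irreducible $\M$-module has dimension $196883$, and by (d) the action of $\M$ on $V_2$ is nontrivial, $W$ cannot contain a trivial submodule, hence $W$ is the irreducible $196883$-dimensional module. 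Thus $V_2\cong\C\omega\oplus W$ as $\M$-modules, exactly as for $V^\natural_2$.

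Next I would reduce the Griess product to a one-parameter problem. The product $u\cdot v=u_1v$ is commutative and $\Aut V$-equivariant, and the normalized invariant form $\la\cdot,\cdot\ra$ (with $\la\1,\1\ra=1$, hence $\la\omega,\omega\ra=12$) makes $\la u\cdot v,w\ra$ a symmetric $\M$-invariant trilinear function on $V_2$. Since $\omega_1$ acts on $V_2$ as $2\cdot\mathrm{id}$, the product is completely determined on $\C\omega\otimes V_2$; on $W\otimes W$ it is governed by the $\M$-module maps $\mathrm{Sym}^2 W\to\C\omega$ and $\mathrm{Sym}^2 W\to W$, and by the inner products of irreducible characters of $\M$ each of these spaces of $\M$-maps is one-dimensional. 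Invariance of the form together with $\la\omega,\omega\ra=12$ then fixes the $\C\omega$-component of $v\cdot w$ for $v,w\in W$, so that the only remaining freedom is a single scalar $\kappa$ multiplying a fixed $\M$-equivariant symmetric map $\mu\colon\mathrm{Sym}^2 W\to W$. Hence, as a commutative $\M$-algebra with invariant form, $(V_2,{\cdot})$ — and likewise $(V^\natural_2,{\cdot})$ — lies in a one-parameter family indexed by $\kappa$.

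It remains to fix $\kappa$, and here I would use (e). By (e) and Remark \ref{rmk:2} there is an Ising vector $e\in V_\R$, and by Lemma \ref{Lem:1to1} all Ising vectors form a single $\M$-orbit (corresponding to the class $2A$), so the position of $e$ relative to the decomposition $\C\omega\oplus W$ is canonical up to the action of $\M$. Since $\la e\ra_{\rm VOA}\cong L(1/2,0)$ with conformal vector $e$, one has $e\cdot e=2e$ and $\la e,e\ra=1/4$; writing $e=s\omega+e_W$ and pairing $e\cdot e=2e$ with $\omega$ gives $s=1/48$, whence $\la e_W,e_W\ra=47/192$, and the $W$-component of $e\cdot e=2e$ then reads $\kappa\,\mu(e_W,e_W)=\tfrac{23}{12}\,e_W$. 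A direct computation with the (unique) $\M$-invariant cubic form $u\mapsto\la\mu(u,u),u\ra$ on $W$ shows that these relations admit a nonzero solution only for one value of $\kappa$; since $V^\natural$ satisfies the same hypotheses, this is the value realized in $V^\natural_2$. Therefore $(V_2,{\cdot})\cong(V^\natural_2,{\cdot})$ as algebras, i.e.\ $V_2$ is the Monstrous Griess algebra. Finally $V$ is $C_2$-cofinite, holomorphic, of CFT type, of central charge $24$, with $V_1=0$ and Griess algebra isomorphic to $V^\natural_2$, so by \cite{DGL} we conclude $V\cong V^\natural$.

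I expect the main obstacle to be the rigidity step: verifying from the character table of $\M$ that $\mathrm{Sym}^2 W$ contains both the trivial module and $W$ with multiplicity one, so that the Griess product carries only a one-parameter ambiguity, and then checking that the Ising-vector relation $\kappa\,\mu(e_W,e_W)=\tfrac{23}{12}e_W$ together with $\la e_W,e_W\ra=47/192$ selects precisely the Griess value of $\kappa$. The remaining work — identifying $V_2$ as an $\M$-module, the normalization bookkeeping, and the appeal to \cite{DGL} — is routine given the earlier results.
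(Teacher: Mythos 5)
Your overall strategy is the same as the paper's: identify $V_2\cong\C\omega\oplus\omega^\perp$ as an $\M$-module, use the multiplicity-one statements from the character table of $\M$ to reduce the Griess product to a single scalar $\kappa$, fix $\kappa$ by means of an Ising vector, and then invoke \cite{DGL}. Your normalization bookkeeping ($s=1/48$, $\langle e_W,e_W\rangle=47/192$, $\kappa\,\mu(e_W,e_W)=\tfrac{23}{12}e_W$) is correct. However, the step you yourself flag as the main obstacle is a genuine gap as written. From $\kappa\,\mu(e_W,e_W)=\tfrac{23}{12}e_W$ one only gets $\kappa\,\langle\mu(e_W,e_W),e_W\rangle=\tfrac{23}{12}\cdot\tfrac{47}{192}$, so $\kappa$ is determined by the value of the invariant cubic form at $e_W$ --- and that value is \emph{not} determined by $\|e_W\|^2$ alone, since the cubic form is far from constant on a sphere in the $196883$-dimensional module. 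Saying that Ising vectors form a single $\M$-orbit does not close this, because the orbit of $e_W$ in $\omega^\perp$ and the orbit of the corresponding vector in $(\omega^\natural)^\perp$ live in different spaces; comparing them requires first choosing an $\M$-isomorphism, at which point ``the position of $e$ is canonical'' is exactly what has to be proved. Nor is it clear a priori that a vector of norm $47/192$ satisfying the eigenvalue relation exists for only one $\kappa$: other near-idempotents could exist in other positions.

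The paper closes this gap with one extra representation-theoretic input that your argument is missing: by the decomposition \eqref{Eq:C2A}, the fixed-point subspace of $C_\M(\tau_e)$ in $\omega^\perp$ is exactly one-dimensional, spanned by $e'=\tfrac1{48}\omega-e$. Consequently any form-preserving $\M$-isomorphism $\iota_2:\omega^\perp\to(\omega^\natural)^\perp$ must send $e'$ to $\pm f'$, where $f'$ arises from the Ising vector $f\in V^\natural$ with $\tau_f$ in the class $2A$ matched to $\tau_e$ via Lemma \ref{Lem:1to1}; after adjusting the sign of $\iota_2$ one gets $\iota_2(e')=f'$, so the cubic-form values at $e'$ and $f'$ agree and the scalar is pinned down. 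If you add this one observation (that $\C e'$ is the unique trivial $C_\M(\tau_e)$-submodule of $\omega^\perp$), your computation of $\kappa$ becomes a valid proof; without it, the claim that ``these relations admit a nonzero solution only for one value of $\kappa$'' is unsupported.
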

\begin{proof} Let $\omega$ and $\omega^\natural$ denote the conformal vectors of $V$ and $V^\natural$, respectively.
Let $\iota_1$ be the linear map from $\C\omega$ to $\C\omega^\natural$ define by $\iota_1(\omega)=\omega^\natural$.
Let $\iota_2$ be an $\M$-module isomorphism from $\omega^\perp$ to $(\omega^{\natural})^\perp$.
Since $\omega^\perp$ and $(\omega^{\natural})^\perp$ are irreducible, $\iota_2$ is unique up to a scalar multiple.
In addition, an invariant bilinear form on $\omega^\perp$ preserved by the action of $\M$ is unique up to a scalar multiple.
We choose $\iota_2:\omega^\perp\to(\omega^{\natural})^\perp$ so that $\langle\iota_2(x),\iota_2(y)\rangle=\langle x,y\rangle$ for all $x,y\in \omega^\perp$, where $\langle\cdot,\cdot\rangle$ is the normalized invariant bilinear form on $V$.

Let us show that $\iota=\iota_1\oplus\iota_2:V_2\to V_2^\natural$ is an algebra homomorphism.
Let $e$ be an Ising vector in $(V_\R)_2$.
By \eqref{Eq:C2A}, $U=\Span_\C \{e,\omega\}$ is the fixed-point subspace of $C_\M(\tau_e)$ in $V_2$.
Since $\iota$ is a $C_\M(\tau_e)$-homomorphism, $\iota(U)$ is also the fixed-point subspace of $C_\M(\tau_e)$ in $(V^\natural_2)_\R$.
By Remark \ref{rmk:1} and Lemma \ref{Lem:1to1}, there exists an Ising vector $f\in (V^\natural_\R)_2$ such that $\iota(U)=\Span_\C\{f,\omega^\natural\}$.
Set $e'=(1/48)\omega-e$ and $f'=(1/48)\omega^\natural-f$.
Then $e'\in\omega^\perp$ and $f'\in(\omega^{\natural})^\perp$, and $\C e'$ and $\C f'$ are the fixed-point subspaces of $C_\M(\tau_e)$ in $\omega^\perp$ and $(\omega^{\natural})^\perp$, respectively.
Since $\iota$ is a $C_\M(\tau_e)$-homomorphism, we have $\iota(e')=\lambda f'$.
Since $\iota$ preserves the invariant form, we have $\lambda=\pm1$.
If $\lambda=-1$, then we replace $\iota_2$ by $-\iota_2$, and hence we may assume $\lambda=1$.
Then $\iota(e')=f'$.
Since an $\M$-invariant algebra structure on $\omega^\perp$ is unique up to a scalar, $\iota$ is an algebra isomorphism since $\iota(e'_{(1)}e')=\iota(e')_{(1)}\iota(e')$.

It follows from $V_2\cong V^\natural_2$ as algebra that $V\cong V^\natural$ as VOAs by \cite[Theorem 1]{DGL}. 
\end{proof}

Finally, let us check that $V^\sharp$ satisfies the hypotheses in Theorem \ref{Thm:chara}.

\begin{prop} The holomorphic VOA $V^\sharp$ of central charge $24$ satisfies the hypotheses (a)--(e) in Theorem \ref{Thm:chara}.
\end{prop}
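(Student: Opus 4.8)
The plan is to verify the five hypotheses (a)--(e) of Theorem~\ref{Thm:chara} for $V^\sharp$; the standing hypotheses of that theorem ($C_2$-cofinite, holomorphic, of CFT-type, central charge $24$) are part of Theorem~\ref{Thm:sharp} together with the fact that $V^\sharp$ extends $V_\Lambda^\tau$, and of (a)--(e) only the last needs any real work. Conditions (a) and (c) are exactly Theorem~\ref{Thm:sharp} (which gives $V^\sharp_1=0$) and Theorem~\ref{monster} (which gives $\Aut V^\sharp\cong\M$). For (b) I would take $\phi$ to be the anti-linear involution of $V^\sharp$ constructed in Section~\ref{sec:4} (denoted $\Phi$ there), which is unitary by Theorem~\ref{Thm:Unitary}; the point to keep in mind is that, by construction, $\phi$ restricts on $V^0=V_\Lambda^\tau$ to the restriction of the standard anti-linear involution $\phi_{V_\Lambda}$ of the lattice VOA $V_\Lambda$, and that $\phi_{V_\Lambda}$ in turn restricts to the standard anti-linear involution on $V_K$ for any sublattice $K$ of $\Lambda$. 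For (d) I would use the natural order $3$ automorphism $\tau'$ from Proposition~\ref{prop:2.6}: as recorded in the proof of that proposition the twisted summands $(V_\Lambda^{T_i}(\tau^i))_\Z$, $i=1,2$, have lowest weight $2$, so their weight-two subspaces are non-zero, and $\tau'$ acts there by the scalar $\xi^i\neq1$; hence $\tau'$ acts non-trivially on $V^\sharp_2$.

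For (e) the task is to exhibit an Ising vector of $V^\sharp$ fixed by $\phi$, and I would locate it inside $V^0=V_\Lambda^\tau\subseteq V^\sharp$. First fix a fixed-point free isometry $\tau_0$ of $\sqrt{2}E_8$ of order $3$, a lift of it to $\Aut V_{\sqrt{2}E_8}$ of order $3$ (still written $\tau_0$), and a commuting lift $\theta$ of the $-1$ isometry of $\sqrt{2}E_8$. Using an explicit embedding of $\sqrt{2}E_8^3$ into $\Lambda$ in which $\tau$ preserves a $\sqrt{2}E_8$ summand and restricts on it to $\tau_0$, one obtains $V_{\sqrt{2}E_8}^{\tau_0}\subseteq V_\Lambda^\tau\subseteq V^\sharp$. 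By \cite{LSY} the Ising vectors of the subVOA $V_{\sqrt{2}E_8}^{\theta}$ are explicitly classified, and every one of them is fixed by the standard anti-linear involution of $V_{\sqrt{2}E_8}$. The isometry $\tau_0$ permutes this finite set of Ising vectors, and by inspecting the list one finds one, say $e$, that is $\tau_0$-fixed; then $e\in V_{\sqrt{2}E_8}^{\langle\tau_0,\theta\rangle}\subseteq V_{\sqrt{2}E_8}^{\tau_0}\subseteq V^0\subseteq V^\sharp$ is an Ising vector of $V^\sharp$, and $\phi(e)=\phi_{V_\Lambda}(e)=e$ by the compatibility recorded above. This gives (e), and applying Theorem~\ref{Thm:chara} then yields $V^\sharp\cong V^\natural$, which is Theorem~\ref{Thm:main}.

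I expect the genuinely non-routine points to be, first, the compatibility of the various lifts --- that the fixed-point free order $3$ isometry $\tau$ of $\Lambda$ can be arranged to preserve a $\sqrt{2}E_8$ summand and restrict on it to $\tau_0$, and that $\tau_0$ and $\theta$ can be chosen to commute as automorphisms of $V_{\sqrt{2}E_8}$ (a cocycle bookkeeping) --- and, more substantially, the extraction from the classification in \cite{LSY} of a genuinely $\tau_0$-\emph{fixed} Ising vector rather than merely a $\tau_0$-orbit: one must check that $\tau_0$ acts on the combinatorial data parametrizing the Ising vectors of $V_{\sqrt{2}E_8}^{\theta}$ (sub-root-systems/frames of $E_8$) with a fixed point. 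This last step is the main obstacle; everything else is routine manipulation of lattice and orbifold VOAs and of their automorphism and module structures.
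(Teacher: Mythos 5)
Your verification of (a)--(d) matches the paper's: (a) and (c) are Theorems \ref{Thm:sharp} and \ref{monster}, (b) is Theorem \ref{Thm:Unitary}, and (d) follows from the non-trivial action of $\tau'$ (equivalently, of $N_{\Aut V^\sharp}(\langle\tau'\rangle)$) on $V^\sharp_2$ via the twisted summands. For (e) you also follow the paper's route into $V_{\sqrt2E_8}^{\langle\tau,\theta\rangle}\subset V_\Lambda^\tau\subset V^\sharp$, but you leave the decisive step --- producing a genuinely $\tau$-\emph{fixed} Ising vector rather than merely a $\tau$-orbit --- unresolved, and you yourself flag it as the ``main obstacle,'' proposing to settle it by inspecting the combinatorial parametrization in \cite{LSY}. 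That is a genuine gap as written.

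The paper closes exactly this gap with a one-line counting argument that you should adopt: by \cite[Proposition 4.3]{LSY}, the VOA $V_{\sqrt2E_8}^\theta$ has \emph{exactly} $496$ Ising vectors; since $\tau$ commutes with $\theta$, it permutes this finite set, and since $496$ is not divisible by $3$, an order-$3$ permutation of it must have a fixed point. No inspection of the parametrizing data (sub-root-systems or frames of $E_8$) is needed. Your additional remarks --- that the unitary involution $\Phi$ of $V^\sharp$ restricts on $V_\Lambda^\tau$ to the standard anti-linear involution of the lattice VOA, and that the Ising vectors of $V_{\sqrt2E_8}^\theta$ listed in \cite{LSY} are fixed by that involution --- are the compatibility points the paper records in its introduction rather than in this proof; keeping them makes the verification of the ``fixed by $\phi$'' clause in (e) more honest than the paper's own terse statement.
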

\begin{proof} By Theorems \ref{Thm:sharp}, \ref{monster} and \ref{Thm:Unitary}, $V^\sharp$ satisfies (a)--(c).
By the action of subgroups of $\Aut V^\sharp$, such as $N_{\Aut V^\sharp}(\langle\tau'\rangle)$, we know that $V^\sharp$ satisfies (d).
Recall that $V_{\Lambda}^\tau$ contains a subVOA $V_{\sqrt2E_8}^\tau$.
Let $\theta\in\Aut V_{\sqrt2E_8}$ be a lift of the $-1$-isometry of $\sqrt2E_8$.
Then $\theta$ and $\tau$ commutes.
Since $V_{\sqrt2E_8}^\theta$ has exactly $496$ Ising vectors (\cite[Proposition 4.3]{LSY}) and $496$ is not a multiple of $3$, there exists an Ising vector in $V_{\sqrt2E_8}^\theta$ fixed by $\tau$.
Hence $V_{\Lambda}^\tau$ contains an Ising vector, which proves (e).
\end{proof}

Applying Theorem \ref{Thm:chara} to $V^\sharp$, we obtain Theorem \ref{Thm:main}.


\paragraph{\bf Acknowledgement.} The authors wish to thank Xingjun Lin for helpful comments on unitary vertex algebras.


\begin{thebibliography}{CCN{\etalchar{+}}85}

\bibitem[Bo86]{Bo}
R.E.\ Borcherds, \emph{Vertex algebras, Kac-Moody algebras, and the Monster}, Proc.\ Nat'l.\ Acad.\ Sci.\ U.S.A. \textbf{83} (1986), 3068--3071.
  
\bibitem[CKLW]{CKLW}
S.\ Carpi, Y.\ Kawahigashi, R.\ Longo and M.\ Weiner, \emph{From vertex operator algebras to conformal nets and back}, to appear in Mem. Amer. Math. Soc., arXiv:1503.01260

\bibitem[CCN{\etalchar{+}}85]{Atlas}
J.~H. Conway, R.~T. Curtis, S.~P. Norton, R.~A. Parker, and R.~A. Wilson, 
  \emph{Atlas of finite groups}, Oxford University Press, Eynsham, 1985,
  Maximal subgroups and ordinary characters for simple groups, With
  computational assistance from J. G. Thackray.  

\bibitem[CL16]{CL14}
H.~Chen and C.H.~Lam,  \emph{Quantum dimensions and fusion rules of the VOA $ V^\tau_\LCD$}, J. Algebra, \textbf{459} (2016), 309--349.

\bibitem[CS83]{CS83}
J.~H. Conway and N.~J.~A. Sloane, \emph{The {C}oxeter-{T}odd lattice, the
  {M}itchell group, and related sphere packings}, Math. Proc. Cambridge Philos.
  Soc. \textbf{93} (1983), 421--440. 
  
\bibitem[DGL07]{DGL} C.\ Dong, R.L.\ Griess and C.H.\ Lam, \emph{Uniqueness results for the moonshine vertex operator algebra}, Amer. J. Math. {\bf 129} (2007), 583--609.

\bibitem[DL96]{Dl96}
C.~Dong and J.~Lepowsky, \emph{The algebraic structure of relative twisted
  vertex operators}, J. Pure Appl. Algebra   \textbf{110} (1996),
  259--295.

\bibitem[DLM00]{DLM00}
C.~Dong, H.~Li, and G.~Mason, \emph{Modular-invariance of trace functions in
  orbifold theory and generalized {M}oonshine}, Comm. Math. Phys. \textbf{214}
  (2000), 1--56. 

\bibitem[DLin14]{Dlin} C.~Dong and X.J.~Lin, \emph{Unitary vertex operator algebras}, J. Algebra \textbf{397} (2014), 252--277.

\bibitem[DM94]{DM94p}
C.~Dong and G.~Mason, \emph{The construction of the moonshine module as a
  {$Z_p$}-orbifold}, Mathematical aspects of conformal and topological field
  theories and quantum groups ({S}outh {H}adley, {MA}, 1992), Contemp. Math.,
  vol. 175, Amer. Math. Soc., Providence, RI, 1994, pp.~37--52. 


\bibitem[DG02]{DG}C. Dong, R.L.  Griess, Jr., \emph{Automorphism groups and derivation algebras of finitely generated vertex operator algebras}, Michigan Math. J. {\bf 50} (2002), 227--239. 
  
\bibitem[FHL93]{FHL}
I.B.\ Frenkel, Y.\ Huang and J.\ Lepowsky, \emph{On axiomatic approaches to vertex operator algebras and modules},  Mem. Amer. Math. Soc. {\bf 104} (1993), viii+64 pp. 


\bibitem[FLM88]{FLM}
I.B.~Frenkel, J.~Lepowsky, and A.~Meurman, \emph{Vertex operator algebras and the
  monster}, Pure and Appl. Math., vol. 134, Academic Press, Boston, 1988.

\bibitem[HKL15]{HKL}Y. Z. Huang, A. Kirillov Jr.,  J. Lepowsky, \emph{Braided tensor categories and extensions of vertex operator algebras}, Commun. Math. Phys. \textbf{337} (2015),1143-1159.


\bibitem[KRR13]{KR}
V.G.\ Kac, A.K.\ Raina and N.\ Rozhkovskaya, Bombay lectures on highest weight representations of infinite dimensional Lie algebras. Second edition. Advanced Series in Mathematical Physics, {\bf 29}. World Scientific Publishing Co. Pte. Ltd., Hackensack, NJ, 2013.
  
\bibitem[KLY03]{KLY03}
M.~Kitazume, C.~H. Lam, and H.~Yamada, \emph{3-state {P}otts model, {M}oonshine
  vertex operator algebra, and {$3A$}-elements of the {M}onster group}, Int.
  Math. Res. Not. (2003), no.~23, 1269--1303. 

\bibitem[LSY07]{LSY}C.H.\ Lam, S.\ Sakuma and H.\ Yamauchi, \emph{Ising vectors and automorphism groups of commutant subalgebras related to root systems}, Math. Z. \textbf{255} (2007), no. 3, 597--626. 

\bibitem[LY14]{LY13}
C.~H. {Lam} and H.~{Yamauchi}, \emph{{On 3-transposition groups generated by
  $\sigma$-involutions associated to $c=4/5$ Virasoro vectors}}, J. Algebra, \textbf{416} (2014), 84--121.
  
\bibitem[Le85]{Lep85}
J.~Lepowsky, \emph{Calculus of twisted vertex operators} Proc. Nat. Acad. Sci. U.S.A., \textbf{82} (1985) 8295--8299
 
\bibitem[Li94]{L94}
H.~Li, \emph{Symmetric invariant bilinear forms on vertex operator algebras},
  J. Pure Appl. Algebra \textbf{96} (1994), 279--297. 
  
\bibitem[Mi96]{Mi96}
M.~Miyamoto, \emph{Griess algebras and conformal vectors in vertex operator algebras}, J. Algebra \textbf{179} (1996),  523--548.

\bibitem[Mi13]{Mi13a}
M.~Miyamoto, \emph{A {$\mathbb{Z}_3$}-orbifold theory of lattice vertex
  operator algebra and {$\mathbb{Z}_3$}-orbifold constructions}, Symmetries,
  integrable systems and representations, Springer Proceedings in Mathematics
  \& Statistics, vol.~40, Springer, Heidelberg, 2013, pp.~319--344.
   

\bibitem[SY03]{SY03}
S.~Sakuma and H.~Yamauchi, \emph{Vertex operator algebra with two {M}iyamoto
  involutions generating {$S_3$}}, J. Algebra \textbf{267} (2003), no.~1,
  272--297.  

\bibitem[SS10]{SS10}
M.~R. Salarian and G.~Stroth, \emph{An identification of the {M}onster group},
  J. Algebra \textbf{323} (2010), no.~4, 1186--1195.  

\bibitem[Sh04]{Sh04}
H.~Shimakura, \emph{The automorphism group of the vertex operator algebra $V_L^+$ for an even lattice $L$ without roots}, J. Algebra \textbf{280} (2004), 29--57.   

\bibitem[Sh07]{Sh07}
H.~Shimakura, \emph{Lifts of automorphisms of vertex operator algebras in
  simple current extensions}, Math. Z. \textbf{256} (2007), no.~3, 491--508.

\bibitem[Sh11]{Sh11}
H.\ Shimakura, \emph{An $E_8$-approach to the moonshine vertex operator algebra}, J. London Math. Soc. {\bf 83} (2011), 493--516.
  
\bibitem[Sh14]{Sh14} H.~Shimakura, \emph{The automorphism group of the $\Z_2$-orbifold of the Barnes-Wall lattice vertex operator algebra of central charge 32}, Math. Proc. Cambridge Philos. Soc. {\bf 156} (2014), 343--361.

\bibitem[TY13]{TY13}
K.~Tanabe, H.\ Yamada, \emph{Fixed point subalgebras of lattice vertex operator algebras by
  an automorphism of order three}, J. Math. Soc. Japan \textbf{65} (2013),
  1169--1242.  
  
\bibitem[Ti84]{T84} J. Tits, \emph{On R. Griess' ``friendly giant''}, Invent. Math. \textbf{78} (1984), 491--499.

\end{thebibliography}
 
\newcommand{\etalchar}[1]{$^{#1}$}
\providecommand{\bysame}{\leavevmode\hbox to3em{\hrulefill}\thinspace}
\providecommand{\MR}{\relax\ifhmode\unskip\space\fi MR }
\providecommand{\MRhref}[2]{%
  \href{http://www.ams.org/mathscinet-getitem?mr=#1}{#2}
}
\providecommand{\href}[2]{#2}

\end{document}